\newtheorem{theorem}{Theorem}[section]
\newtheorem{lemma}{Lemma}[section]
\newtheorem{proposition}{Proposition}[section]
\theoremstyle{remark}
\newtheorem{remark}{Remark}
\theoremstyle{definition}
\newtheorem{definition}{Definition}[section]
\newtheorem{example}{Examples}
\numberwithin{equation}{section}
\begin{document}

\title{Weak and viscosity solutions for non-homogeneous fractional  equations in Orlicz spaces}
\author[ad1]{Mar\'ia L. de Borb\'on}
    \ead{laudebor@gmail.com}
\author[ad2]{Leandro M. Del Pezzo}
    \ead{ldpezzo@dm.uba.ar}
\author[ad3]{Pablo Ochoa}
    \ead{pablo.ochoa@ingenieria.uncuyo.edu.ar}
    
    \address[ad1]{Facultad de Ciencias Econ\'omicas, FCE, Universidad Nacional de Cuyo-CONICET, Parque Gra. San Mart\'in SN (5500), Mendoza, Argentina.}

    \address[ad2]{Departamento  de Matem{\'a}tica, FCEyN, Universidad de Buenos Aires, Pabellon I, Ciudad Universitaria (C1428BCW), Buenos Aires, Argentina.}

    \address[ad3]{Facultad de Ingenier\'ia, Universidad Nacional de Cuyo-CONICET, Parque Gra. San Mart\'in SN (5500), Mendoza, Argentina.}

\begin{abstract}
    In this paper, we consider non-homogeneous fractional equations in Orlicz spaces, with a source depending on the spatial variable, the unknown function and its fractional gradient. The latter is adapted to the Orlicz framework. The main contribution of the article is to establish the equivalence between weak and viscosity solutions  for such equations.
\end{abstract}

\begin{keyword}
    \MSC 35D40,\sep 35D30,\sep 35R11,\sep 46E30
    
    viscosity solutions, weak solutions, non-homogeneous problems, g-Laplace 
    operator, Orlicz spaces.
\end{keyword}

\maketitle

\section{Introduction}

    In this paper, 
    we prove the equivalence between weak and viscosity solutions 
    for the non-homogeneous fractional $g-$Laplace equation
    \begin{equation}\label{problem}
	    (-\Delta_g)^s u = f(x,u,D_g^su) \text{ in } \Omega,
    \end{equation}
    where $\Omega\subset \mathbb{R}^n$ is a bounded open domain. Given
    $s\in(0,1)$ and a Young function $G$ such that $g\coloneqq G^\prime$, the fractional $g-$Laplace operator
    is defined by
    
    \begin{equation}\label{defglaplacian}
            (-\Delta_g)^s u (x) = \text{P.V.} \int_{\mathbb{R}^{n}}g\left(\frac{u(x)-u(y)}{|x-y|^s}\right)\frac{dy}
            {|x-y|^{n+s}}
    \end{equation}
    for any smooth function $u.$ 
    Here P.V. is an abbreviation for ``in the principal value sense''. In 
    \eqref{problem}, $D_{g}^{s}$ denotes the $g-$fractional gradient of $u\in W^{s, G}(\Omega)$, that is defined as follows
    \[
        D_{g}^{s}u(x)\coloneqq 
        \int_{\mathbb{R}^{n}}G\left(\dfrac{|u(x)-u(y)|}{|x-y|^{s}}\right)\dfrac{dy}{|x-y|^{n}}, \quad x \in \Omega.
    \]
    Observe that $D_{g}^{s}u(x)$ is finite a.e. 
    since $u\in W^{s, G}(\Omega).$  
    We refer the reader to Section \ref{Preli} for 
    for definitions and properties of the spaces that we use.
    
    To state the equivalence of solutions, we will assume that $f\colon\Omega\times\mathbb{R}\times\mathbb{R}\to \mathbb{R}$ satisfies the following growth conditions   
    \begin{equation}\label{growth f}
        |f(x, r, \eta)| \leq \gamma(|r|)\tilde{G}^{-1}(|\eta|) + \phi(x),
    \end{equation}where $\tilde{G}$ is the complementary function of $G$,   
    $\gamma \geq0$ is continuous, and $\phi \in L^{\infty}(\Omega)$.

    \medskip
    
    If we take $g(t)=t$, then the fractional $g-$Laplace operator is the fractional Laplace operator, that is
    \[
        (-\Delta)^s u (x) = \text{P.V.} \int_{\mathbb{R}^{n}}\frac{u(x)-u(y)}{|x-y|^{n+2s}}dy
    \]
    This operator is the most basic elliptic linear integro-differential operator.
    Problems with non local diffusion that involve integro-differential operators 
	have been intensively studied in the last years. These nonlocal operators 
	appear when we model different physical situations as anomalous diffusion and 
	quasi-geostrophic flows, turbulence and water waves, molecular dynamics and 
	relativistic quantum mechanics of stars (see \cite{BG,C} and 
	{the references therein}). They also appear  in mathematical finance 
	\cite{A,CT}, elasticity  problems \cite{Si}, 
	phase transition problems  \cite{AB} and crystal dislocation structures
	\cite{T}, among others.
	
	On the other hand, the $g-$fractional gradient is the natural extension of 
	the fractional gradient
    \[
        D^{s}u(x)\coloneqq
        \int_{\mathbb{R}^{n}}\dfrac{|u(x)-u(y)|^2}{|x-y|^{n+2s}}dy, \quad x \in 
        \Omega.
    \]
    This $s-$gradient appears naturally when studying fractional harmonic maps to 
    the sphere. See \cite{CD} and the references therein.
    
    \medskip
    
    Our first main result is that viscosity solutions of \eqref{problem}
    are also weak solutions. We state the result for supersolutions. See Section \ref{Preli} for details in the assumptions below.

    \begin{theorem}\label{visc wk} 
        Assume that $G$ is a Young function satisfying \eqref{H1} and so that its 
        complementary $\tilde{G}$ verifies the $\Delta'$-condition. 
        Also, suppose that $f=f(x, r, \eta)$ is non-increasing in $r$,  
        uniformly continuous in $\Omega \times \mathbb{R}\times 
        \mathbb{R}$, Lipschitz continuous in $\eta$, and it  satisfies the growth \eqref{growth f}. If $u \in 
        L^\infty(\mathbb{R}^n)$ is lower semicontinuous in $\mathbb{R}^n$ and is  a viscosity supersolution 
        of \eqref{problem}, then $u$ is a weak supersolution of \eqref{problem}.  
    \end{theorem}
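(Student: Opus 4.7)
The plan is to argue by contradiction via infimal-convolution regularization, adapted to the fractional $g$-Laplacian setting. Suppose $u$ is a viscosity supersolution but \emph{not} a weak supersolution; then there exists a non-negative $\phi\in C_{c}^{\infty}(\Omega)$ for which the weak inequality is strictly reversed. For small $\epsilon>0$, introduce the infimal convolution
\[
u_{\epsilon}(x)\coloneqq\inf_{y\in\mathbb{R}^{n}}\Bigl\{u(y)+\tfrac{1}{q\epsilon^{q-1}}|x-y|^{q}\Bigr\},
\]
with $q>1$ chosen large enough that $u_{\epsilon}$ is semi-concave (hence locally Lipschitz and twice differentiable a.e.\ by Aleksandrov's theorem). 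Since $u\in L^{\infty}(\mathbb{R}^{n})$, the infimum is attained on a compact set and $u_{\epsilon}\nearrow u$ pointwise as $\epsilon\to 0^{+}$.

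Using the lower semicontinuity of $u$, the uniform continuity of $f$, the monotonicity $r\mapsto f(x,r,\eta)$ being non-increasing (so that $f(x,u_{\epsilon},\eta)\geq f(x,u,\eta)$), and the Lipschitz dependence on $\eta$, a standard argument shows that $u_{\epsilon}$ is still a viscosity supersolution on a smaller domain $\Omega_{\delta}\Subset\Omega$ of the perturbed equation with source $f_{\epsilon}\to f$ locally uniformly.

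At each point $x_{0}\in\Omega_{\delta}$ where $u_{\epsilon}$ admits a second-order Taylor expansion (a full-measure set), the paraboloid is admissible as a test function from below. Semi-concavity controls the near-diagonal behaviour of the integrand, while the $L^{\infty}$ bound on $u$ combined with the $\Delta'$-condition on $\tilde{G}$ controls the tail via a power-type majorization of $g$, so that the principal-value integral defining $(-\Delta_{g})^{s}u_{\epsilon}(x_{0})$ is absolutely convergent and coincides with its pointwise value. The viscosity inequality then yields
\[
(-\Delta_{g})^{s}u_{\epsilon}(x_{0})\geq f_{\epsilon}\bigl(x_{0},u_{\epsilon}(x_{0}),D_{g}^{s}u_{\epsilon}(x_{0})\bigr)\quad\text{a.e.\ in }\Omega_{\delta}.
\]
Multiplying by a non-negative $\phi\in C_{c}^{\infty}(\Omega_{\delta})$ and applying Fubini to symmetrize the double integral rewrites the left-hand side as the standard bilinear form, showing that $u_{\epsilon}$ is itself a weak supersolution of the perturbed equation.

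Finally, one passes to the limit $\epsilon\to 0^{+}$. On the left, the $\Delta'$-condition on $\tilde{G}$ supplies the Orlicz-dual estimate $G(ab)\leq c\,G(a)\tilde{G}(b)$ and enables a Vitali/dominated-convergence argument on the bilinear form, using $\phi\in C_{c}^{\infty}$ to dominate its increments. On the right, the growth condition \eqref{growth f}, the Lipschitz dependence of $f$ in $\eta$, and the convergence $f_{\epsilon}\to f$ give convergence of the source term once a uniform bound on $D_{g}^{s}u_{\epsilon}$ is established from $u\in W^{s,G}(\Omega)$. The main obstacle I expect is precisely the control of the nonlocal gradient inside the source: because $f$ depends on $D_{g}^{s}u_{\epsilon}$, which is itself a nonlocal Orlicz-type object, the limits on the operator and on the source cannot be decoupled, and a careful use of both the $\Delta'$-condition on $\tilde{G}$ (for Orlicz integrability of the $g$-terms) and the Lipschitz dependence of $f$ in $\eta$ (to absorb the gradient perturbation stably under regularization) is needed to close the argument and contradict the initial assumption.
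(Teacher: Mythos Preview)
Your overall architecture---inf-convolution, pointwise inequality a.e., weak formulation for $u_\varepsilon$, then pass to the limit---matches the paper's. However, there are two genuine gaps that would prevent the argument from closing.

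First, you write that a uniform bound on $D_g^s u_\varepsilon$ ``is established from $u\in W^{s,G}(\Omega)$''. But $u$ is \emph{not} assumed to be in $W^{s,G}(\Omega)$; the hypothesis is only $u\in L^\infty(\mathbb{R}^n)$. That $u$ lies in the energy space is part of the conclusion, not the premise. The paper obtains the needed uniform bound on $\int_{K}\int_{\mathbb{R}^n}G(|D_s u_\varepsilon|)\,d\mu$ by first proving that $u_\varepsilon$ is a weak supersolution and then applying a Caccioppoli-type estimate (Proposition~\ref{caccio}) to $u_\varepsilon$; this is exactly where the $\Delta'$-condition on $\tilde{G}$ enters, via $\tilde{G}(g(a)g(b))\le C\tilde{G}(g(a))\tilde{G}(g(b))\le CG(a)G(b)$. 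Without this step you have no compactness at all.

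Second, even with the uniform energy bound, a Vitali/dominated-convergence argument on the bilinear form is not enough: the nonlinearity $g(D_s u_\varepsilon)$ does not pass to the limit under mere weak convergence of $D_s u_\varepsilon$ in $L^G_\mu$. The paper uses a Minty-type monotonicity trick: testing the weak formulation for $u_\varepsilon$ with $(u-u_\varepsilon)\theta$ and exploiting the convexity inequality
\[
\tfrac{1}{2}\bigl[g(D_s u)-g(D_s u_\varepsilon)\bigr](D_s u - D_s u_\varepsilon)\ \ge\ G\Bigl(\tfrac{|D_s u - D_s u_\varepsilon|}{2}\Bigr)
\]
to upgrade to strong convergence of $D_s u_\varepsilon$ in $L^G_\mu$, which is what allows both $g(D_s u_\varepsilon)\rightharpoonup g(D_s u)$ and the source convergence (Lemma~\ref{conv f}, which needs exactly the modular convergence in \eqref{assumption}). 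Your proposal does not contain this mechanism.

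A smaller point: the passage ``multiply by $\phi$ and apply Fubini'' hides real work. Because $(-\Delta_g)^s u_\varepsilon$ is only a principal value and the inequality holds only a.e., one needs a one-sided Fatou argument (after mollifying $u_\varepsilon$ in the case $p^->2/(2-s)$, or a direct $\rho$-regularization otherwise) to obtain the weak inequality; see Lemmas~\ref{lemma 1 weak point} and~\ref{lemma 2 weak point}. Finally, the contradiction framing is superfluous: once the limit passage is justified, the conclusion is direct.
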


    \medskip
    
    {For the converse result we need to assume that a comparison principle holds 
    for weak solutions of \eqref{problem}. Following \cite{BM}, we define below the
    class of functions that satisfy the comparison principle property.} 

    \begin{definition}
        {Let $u$ be a weak supersolution of \eqref{problem} in $D\subset \Omega$. 
        We say that the comparison principle property (CPP) holds in $D$ if for 
        every weak subsolution $v$ of \eqref{problem} in $D$ such that $u\geq v$ 
        a.e. in $\mathbb{R}^n\setminus D$, we have $u\geq v$ a.e. in $D$.}
    \end{definition}

    {Next we state the reverse result.}

    \begin{theorem}\label{wk visc}
        {Assume that $G$ is a Young function satisfying \eqref{H1} and so that its 
        complementary $\tilde{G}$ verifies the $\Delta'$-condition.} 
        Also, suppose that $f=f(x, r, \eta)$ is  continuous in $\Omega \times 
        \mathbb{R}\times \mathbb{R}$ and Lipschitz continuous in $\eta$.
        If $u \in W^{s, G}(\mathbb{R}^n)\cap L_g^s(\mathbb{R}^n)$ is 
        a bounded weak supersolution of \eqref{problem} and (CPP) holds, 
        then $u$ is also a viscosity supersolution of \eqref{problem}.  
    \end{theorem}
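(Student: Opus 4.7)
The plan is to argue by contradiction, following the classical Juutinen--Lindqvist--Manfredi-type strategy adapted to the nonlocal Orlicz setting. Suppose $u$ is a bounded weak supersolution of \eqref{problem} but fails to be a viscosity supersolution. Then there exist $x_0\in\Omega$ and a test function $\varphi\in C^2$ in a neighbourhood of $x_0$, with $\varphi(x_0)=u(x_0)$ and $\varphi<u$ in $\mathbb{R}^n\setminus\{x_0\}$ after the usual reduction to strict touching, such that
\[
(-\Delta_g)^s\varphi(x_0) - f\bigl(x_0,\varphi(x_0),D_g^s\varphi(x_0)\bigr) < 0.
\]

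The first step is to upgrade this pointwise strict inequality to a uniform one on a small ball. Since $\varphi$ is $C^2$ near $x_0$ and equals a bounded function outside, the maps $x\mapsto(-\Delta_g)^s\varphi(x)$ and $x\mapsto D_g^s\varphi(x)$ are continuous on $B_r(x_0)$ for $r$ sufficiently small. Combined with the continuity of $f$, this yields $\eta>0$ and $r>0$ with
\[
(-\Delta_g)^s\varphi(x) - f\bigl(x,\varphi(x),D_g^s\varphi(x)\bigr) \le -\eta, \qquad x\in B_r(x_0).
\]
Next, I would introduce the lifted competitor $\psi_\delta:=\max(\varphi+\delta,u)$ with $\delta>0$ small; by strict touching, $\psi_\delta=u$ outside a small ball $B_{r_\delta}(x_0)\subset B_r(x_0)$, with $r_\delta\to 0$ as $\delta\to 0$.

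The core analytic step is to prove that $\psi_\delta$ is a weak subsolution of \eqref{problem} in $B_r(x_0)$. To this end, I would test the weak formulation with nonnegative functions supported in $B_r(x_0)$, split the double integral defining the energy form of $(-\Delta_g)^s$ according to whether each variable lies in $\{\varphi+\delta\ge u\}$ or its complement, and combine the strict inequality above for $\varphi$ with the weak supersolution inequality for $u$ to control each piece. The $\Delta'$-condition on $\tilde G$ provides the quasi-multiplicative estimates on $g(a+b)$ needed to absorb the crossover terms arising from the coincidence set, while the Lipschitz dependence of $f$ on $\eta$ handles the difference $|D_g^s\psi_\delta - D_g^s\varphi|$ in the source. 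For $\delta$ small enough, these error terms are dominated by $\eta/2$, yielding a genuine (strict) weak subsolution inequality for $\psi_\delta$.

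Finally, applying (CPP) to the subsolution $\psi_\delta$ and the supersolution $u$ in $B_r(x_0)$, where $\psi_\delta\le u$ on $\mathbb{R}^n\setminus B_r(x_0)$ by construction, one concludes $\psi_\delta\le u$ on $B_r(x_0)$ as well. This contradicts $\psi_\delta(x_0)=u(x_0)+\delta>u(x_0)$, completing the proof. The main obstacle is the rigorous verification that $\psi_\delta$ is a weak subsolution: the lack of smoothness of $u$ forces a careful decomposition of the nonlocal energy into the regions determined by the coincidence set $\{\varphi+\delta\ge u\}$, and the estimate on the nonlinear fractional gradient $D_g^s\psi_\delta$ requires exploiting both the Orlicz convexity of $G$ and the $\Delta'$-type growth of $\tilde G$ in order to keep the perturbation of the source $f$ uniformly below the margin $\eta$.
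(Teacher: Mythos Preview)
Your opening (contradiction, strict touching, continuity of $x\mapsto(-\Delta_g)^s\varphi(x)$ and $x\mapsto D_g^s\varphi(x)$, hence a uniform margin $\eta>0$ on a small ball) matches the paper exactly. The divergence is in the choice of competitor.

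The paper does \emph{not} form $\psi_\delta=\max(\varphi+\delta,u)$. Instead it perturbs the smooth test function itself: it constructs $\eta\in C_0^2(B_{r_2/2}(x_0))$ with $\eta(x_0)=1$ and shows (Lemma~\ref{ContPert}) that $(-\Delta_g)^s(\psi+\theta\eta)$ is uniformly close to $(-\Delta_g)^s\psi$ on $B_{r_2}(x_0)$, and similarly (Lemma~\ref{Continuidad D}) for $D_g^s$. Combined with the Lipschitz dependence of $f$ on the third slot, this yields the pointwise inequality $(-\Delta_g)^s(\psi+\theta\eta)\le f(x,u,D_g^s(\psi+\theta\eta))$ on $B_{r_2/2}(x_0)$ for small $\theta$. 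Since $\psi+\theta\eta$ is $C^2$ in that ball and equals $\psi\le u$ outside, the pointwise subsolution inequality immediately gives the weak one, and (CPP) closes the argument.

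Your route has a genuine gap at the ``core analytic step''. The function $\psi_\delta=\max(\varphi+\delta,u)$ is not smooth, so you cannot pass from a pointwise to a weak inequality for free; you must produce the weak subsolution inequality directly. Your plan to ``combine the strict inequality for $\varphi$ with the weak supersolution inequality for $u$'' is not convincing: the inequality for $u$ points the \emph{wrong way} for building a subsolution, and for a nonlocal operator the value of $(-\Delta_g)^s\psi_\delta$ at a point in $\{\varphi+\delta>u\}$ involves integration against $\psi_\delta=u$ over the complement, so the clean local ``max of a subsolution is a subsolution on its contact set'' heuristic does not transfer. Your sketch does not explain how the crossover terms are controlled, and I do not see how the $\Delta'$-condition on $\tilde G$ alone would rescue this. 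The paper's smooth-bump perturbation was designed precisely to sidestep this difficulty: the competitor stays $C^2$, the only analysis needed is the \emph{stability} of $(-\Delta_g)^s$ and $D_g^s$ under small $C^2_0$ perturbations (Lemmas~\ref{ContPert} and~\ref{Continuidad D}), and the passage from pointwise to weak is then immediate.
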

    
    \medskip

    The relation between different notions of solutions has been studied by several
    authors and for different operators in the last decades. For linear problems,
    the equivalence between distributional and viscosity solutions is given in
    \cite{Is}. Later, for weak and viscosity solutions for the homogeneous 
    $p-$Laplace equations, the equivalence was provided in \cite{JLM1} 
    (with a different proof in \cite{JJ}). Recently, for a source depending on all 
    the lower-order terms, the relation between weak and viscosity solutions for 
    the $p-$Laplace equation was given in \cite{MO}, following some ideas from 
    \cite{JJ}. Similar studies have been made for operator with non-standard growth
    \cite{JLM, Sil}, and  recently for non-local operators \cite{KKL} and 
    \cite{BM}. In this work, we propose to generalize the later results to the 
    non-homogeneous fractional $g-$laplacian operator.

    Applications of the equivalence between viscosity and weak solutions can be 
    found in  \cite{Sil} and \cite{JL} to removability of sets and Rado type 
    theorems. Also, the equivalence has been used in  free-boundary problems 
    \cite{FL}, \cite{BLO}.
    
    \medskip

    {\bf The paper is organized as follows}. In Section \ref{Preli}, we give the  
    definition of the Orlicz spaces, some technical results and the notions of 
    solutions. In Section  \ref{previous results}, we give previous results for 
    viscosity and weak solutions that we shall use to state the equivalence of 
    solutions. {There we also provide some continuity properties necessary for the proof of 
    Theorem \ref{wk visc}}. Afterwards, in Sections \ref{visc wk t} and \ref{wk visc t} we 
    prove the main results of the paper. We end the paper with a brief appendix regarding some
    inequalities for Young functions.

\section{Preliminaries}\label{Preli}
    In this section, we gather some preliminary properties which will be useful in the forthcoming sections.

\subsection{Notations}  Throughout the paper, we  use the notation
    \[
        u_+(x)=\max\{u(x),0\}, \quad u_-(x)=\max\{-u(x),0\}, 
    \]
    \[
        D_s u \coloneqq \dfrac{u(x)-u(y)}{|x-y|^{s}}
        \quad \text{ and } \quad 
        d\mu\coloneqq\dfrac{dxdy}{|x-y|^{n}}.
    \]
    
    For all $a\in \mathbb{R}$ and $q>0,$ we set
    \[
        a^q=|a|^{q-1}a.
    \]
\subsection{Young functions}
    
    A  function $G:[0, \infty) \to [0, \infty)$ is called 
    {\it a Young function} if it has the following
     integral representation
    \[ 
        G(t)=\int_0^{t}g(r)\,dr
    \]
    where the right-continuous function $g: [0, \infty) \to [0, \infty)$ satisfies
    \begin{enumerate}   
        \item $g(0)=0$, $g(t)>0$ for $t>0$;\vspace{.25cm}
        \item $g$ is non-decreasing in $[0, \infty)$;\vspace{.25cm}
        \item $\lim\limits_{t \to \infty}g(t)=\infty$.
    \end{enumerate}
    {Observe that we use the term \textit{Young function} to denote N-functions as defined in 
    \cite{KR}}.
    
    From the above properties, it is easy to see that a Young function $G$ is 
    continuous, nonnegative, strictly increasing, and convex in $[0, \infty)$. 
    Also, without loss of generality we can assume that $G(1) = 1$.
    
    \medskip

    In this article, we consider Young functions $G$ assuming that  
    $g=G'$  is an absolutely continuous function such that 
    \begin{equation}\label{H1}
        p^--1 \leq \dfrac{tg'(t)}{g(t)} \leq p^+-1, \quad\forall t>0
    \end{equation}
    with $1< p^-<p^+<\infty.$ 
    
    \begin{example}
       The following functions are Young functions that satisfy \eqref{H1}:
       \begin{enumerate}[(a)]
           \item $G(t)= t^p$ with $p>1.$
           \item $G(t)=t^p(|\log(t)|+1)$ with $p>\tfrac{3+\sqrt{5}}{2}.$
       \end{enumerate} 
    \end{example}
    
    Observe that, by integration by parts, \eqref{H1} gives
    \begin{equation}\label{H11}
        1< p^{-} \leq \dfrac{tg(t)}{G(t)} \leq p^{+} <\infty\quad\forall t>0.
    \end{equation}
    Therefore, see \cite[Theorem 4.1]{KR}, $G$ satisfies the $\Delta_2-$condition,
    that is, there exists a positive constant $C$ such that
    \[
        G(2t)\le CG(t)
    \]
    for any $t\ge0.$ Moreover, from \eqref{H1} and  \eqref{H11} it follows that
   \begin{equation}\label{gg product}
         {\min\left\lbrace a^{p^{-}-1}, a^{p^{+}-1} \right\rbrace g(b) \leq g(ab) \leq  \max\left\lbrace a^{p^{-}-1}, a^{p^{+}-1} \right\rbrace g(b),}
    \end{equation}and
    \begin{equation}\label{G product}
        \min\left\lbrace a^{p^{-}}, a^{p^{+}} \right\rbrace G(b) \leq G(ab) \leq  \max\left\lbrace a^{p^{-}}, a^{p^{+}} \right\rbrace G(b),
    \end{equation}for all $a, b \geq 0$.

    \medskip
    
    On the other hand, we may extend $g$ to the whole $\mathbb{R}$ as following: 
    \begin{equation*}
            g(t) = -g(-t)\quad \text{if }\; t<0.
    \end{equation*}
    Then, \eqref{H1} also holds for negative $t$, and it   implies that there is a positive constant $C$ such that
    \begin{equation}\label{H111}
        C\min\{t^{p^{-}-1},t^{p^+-1}\}\leq g(t)\leq
        C\max\{t^{p^{-}-1},t^{p^+-1}\},\quad t\in\mathbb{R}.
    \end{equation}
    
    \medskip 
    
    {By \cite[Lemma 1.3]{KR}, $G$ is an absolutely continuous function. Then, applying  \cite[Lemma 2.2]{MSV} to $G$, and using \eqref{H11} and Lemma \ref{tineqg} there is 
    a positive constant $C$ such that }
    \begin{equation}\label{inq G and g}
        |G(b)-G(a)| \leq C|b-a|(g(|a|)+ g(|b|)), \quad {\forall a,b\in\mathbb{R}}.
    \end{equation}
    
    \medskip
    
    We say that a Young function $G$ satisfies the $\Delta'$-condition if there exists a
    positive constant $C\geq 1$ such that
    \begin{equation}\label{delta prime}
         G(ab) \leq CG(a)G(b)
    \end{equation}
    for all $a, b \geq 0$. 
    Observe that if $G$ satisfies the $\Delta'$-condition, then it also satisfies the $\Delta_2$-condition.

    Examples of functions satisfying the $\Delta'$-condition are:
    \begin{itemize}
        \item $G(t)= t^{p}$, $t \geq 0$, $p > 1$;
        \item $G(t)= t^{p}(|\log(t)|+1)$, $t \geq 0$, $p > \tfrac{3+\sqrt{5}}{2}$;
        \item $G(t)=t^{p}\chi_{(0, 1]}(t) + t^{q}\chi_{(1, \infty)}(t)$, $t \geq 0$, $p, q > 1$.
    \end{itemize}

    Necessary and sufficient conditions for the $\Delta'$-condition are given in \cite[Chapter I, Sec. 5]{KR}.

    \medskip
    
    The {\it complementary function} of a Young function $G$ is defined 
    on $[0,\infty)$ by
    \[
        \tilde{G}(a)\coloneqq \sup\left\lbrace  at-G(t)\colon t> 0\right\rbrace.
    \]
    
    The function $\tilde{G}$ plays the role that  the conjugate function exponent has in the standard theory of Lebesgue and Sobolev spaces. {Also, by \cite[Chapter I, Theorem 4.3]{KR}, the  inequality \eqref{H11} implies that $\tilde{G}$ satisfies the $\Delta_2$-condition. Moreover, by \cite[Chapter I, Theorem 5.3]{KR}, $\tilde{G}$ satisfies the $\Delta'$-condition if the function $h(t):=tg'(t)/g(t)$ does not decrease.}
    
    As a consequence of the inequality \cite[Eq. (2.5)]{BS}
    \[
        at \leq G(t)+\tilde{G}(a), \quad \text{for any } t, a\geq 0,
    \]
    and the inequalities (from \eqref{G product}),
    \begin{align}
        \label{ineqGab} G(a t) \leq tG(a), & \text{ for any } 
            {a\geq 0, \,0 \leq t\leq 1}, \\[.25cm]
        \label{ineqGwithp} G(at) \leq t^{p^+}G(a), 
        & \text{ for any } \, {a\geq 0},  t\geq 1,
    \end{align}
    we get the  following Young's inequality   for $0 < \delta<  1$
 
    \begin{equation}\label{Young}
        at = (a\delta)\left(\frac{1}{\delta}t\right) \leq \tilde{G}(\delta a) + G\left(\frac{1}{\delta}t\right) \leq \delta \tilde{G}(a) + 
        \left(\frac{1}{\delta}\right)^{p^+}G(t)
        \quad \forall a,t>0.
    \end{equation}

    Finally, we quote the following useful lemma.
    \begin{lemma}\cite[Lemma 2.9]{BS}\label{G g}
        Let $G$ be an Young function. If $G$ satisfies \eqref{H1} then 
        \[
            \tilde{G}(g(t)) \leq (p^+-1)G(t),
        \]
        where $g=G'$ and $\tilde{G}$ is the complementary function of $G.$
\end{lemma}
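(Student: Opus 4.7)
The plan is to reduce the bound on $\tilde{G}(g(t))$ to the equality case of Young's inequality and then use the upper growth bound in \eqref{H11}.

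First, I would start from the definition
\[
\tilde{G}(g(t)) = \sup_{s>0}\bigl(g(t)\,s - G(s)\bigr).
\]
Since $g$ is non-decreasing and $G' = g$, the function $s\mapsto g(t)\,s - G(s)$ has derivative $g(t)-g(s)$, which is $\geq 0$ for $s\leq t$ and $\leq 0$ for $s\geq t$. Hence the supremum is attained at $s=t$, giving the pointwise identity
\[
\tilde{G}(g(t)) = t\,g(t) - G(t).
\]
This is the standard equality case in Young's inequality for a Young function whose derivative is its generator, and the hypothesis \eqref{H1} is more than enough to guarantee that $g$ is nonnegative and non-decreasing, so the elementary monotonicity argument works.

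Once this identity is in hand, I would invoke the upper bound in \eqref{H11}, namely $t g(t) \leq p^{+}\,G(t)$ for all $t>0$. Substituting,
\[
\tilde{G}(g(t)) \;=\; t g(t) - G(t) \;\leq\; p^{+} G(t) - G(t) \;=\; (p^{+}-1)\,G(t),
\]
which is exactly the claim. For $t=0$ the inequality is trivial since $g(0)=G(0)=0$.

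There is essentially no obstacle: the only non-trivial ingredient is the equality $\tilde{G}(g(t)) = tg(t) - G(t)$, which is a classical consequence of $g$ being the right-continuous non-decreasing generator of $G$, and the rest is a one-line application of \eqref{H11}. If one wanted to avoid an attainment argument (e.g., in case $g$ has jumps), one could instead observe that for each fixed $s>0$, the convexity of $G$ yields $G(s) \geq G(t) + g(t)(s-t)$, so $g(t)s - G(s) \leq g(t)t - G(t)$ for all $s$, and the sup bound follows directly.
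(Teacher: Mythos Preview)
Your argument is correct: the identity $\tilde{G}(g(t)) = t\,g(t) - G(t)$ (equality in Young's inequality, obtained either by the monotonicity of $s\mapsto g(t)s-G(s)$ or by the convexity inequality you mention) together with the upper bound $tg(t)\leq p^{+}G(t)$ from \eqref{H11} immediately yields the claim. The paper does not give its own proof but simply cites \cite[Lemma 2.9]{BS}; your direct computation is precisely the standard argument behind that reference.
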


We will provide some more useful inequalities for Young functions in the Appendix.

\subsection{Orlicz-fractional Sobolev spaces}
    Given a Young function $G$ with $g=G'$, $s \in (0, 1)$, 
    and an open set $\Omega \subseteq \mathbb{R}^{n}$, 
    we consider the spaces:
    \[
        \begin{split}
            & L^{G}(\Omega) \coloneqq 
            \left\lbrace u\colon \Omega \to \mathbb{R} \colon 
            \Phi_{G, \Omega}(u) < \infty \right\rbrace,\\[5pt]
            &W^{s, G}(\Omega)\coloneqq 
            \left\lbrace u \in L^{G}(\Omega)\colon 
            \Phi_{s, G, \Omega}(u) < \infty \right\rbrace, \text{ and }\\[5pt]
            &L_g(\mathbb{R}^n)\coloneqq\left\lbrace u \in 
            L^{1}_{loc}(\mathbb{R}^{n})\colon 
            \int_{\mathbb{R}^{n}}g\left(\dfrac{|u(x)|}{1+|x|^{s}}\right)\dfrac{dx}
            {1+|x|^{n+s}} < \infty\right\rbrace.
        \end{split}
    \]
    Here, the modulars $\Phi_{G, \Omega}$ and $\Phi_{s, G, \Omega}$ are defined as
    \[    
        \Phi_{G, \Omega}(u) \coloneqq \int_{\Omega}G(|u(x)|)\,dx, 
        \quad\text{ and }\quad
        \Phi_{s, G, \Omega}(u) \coloneqq 
        \int_{\Omega}\int_{\Omega}
        G\left(|D_su|\right)
            d\mu.
    \]
    
    The spaces $L^G(\Omega)$ and $W^{s,G}(\Omega)$ are endowed, 
    respectively, with the following norms
    \[
        \|u\|_{L^G(\Omega)} \coloneqq 
        \inf\left\{\lambda >0 \colon 
        \Phi_{G,\Omega}\left(\frac{u}{\lambda}\right)\leq 1\right\},
    \]
    and 
    \[
         \|u\|_{W^{s,G}(\Omega)} \coloneqq 
         ||u||_{L^G(\Omega)} + [u]_{W^{s,G}(\Omega)},
    \]
    where 
    \[
        [u]_{W^{s,G}(\Omega)} \coloneqq \inf\left\{\lambda>0 \colon
        \Phi_{s,G,\Omega}\left(\frac{u}{\lambda}\right)\leq 1 \right\}.
    \]
    
    {The space $W^{s,G}_0(\Omega)$ will be the closure of $C^\infty_c(\Omega)$ with respect to
    the norm $\|\cdot\|_{W^{s,G}(\Omega)}$.}

    \medskip
    
    The following lemma relates modulars and norms in Orlicz spaces.  

    \begin{lemma}\label{comp norm modular}
        Let $G$ be a Young function satisfying \eqref{H1}, and let 
        $\xi^\pm\colon[0,\infty)\to\mathbb{R}$ be defined as
        \[
            \xi^{-}(t)\coloneqq 
            \min\left\lbrace t^{p^{-}}, t^{p^{+}}\right\rbrace,
            \quad \text{ and }  \quad
            \xi^{+}(t)\coloneqq\max\left\lbrace t^{p^{-}}, t^{p^{+}}\right\rbrace. 
         \] 
         Then
        \begin{enumerate}
            \item $\xi^{-}(\|u\|_{L^{G}(\Omega)}) \leq \Phi_G(u) 
                    \leq  \xi^{+}(\|u\|_{L^{G}(\Omega)})$;
            \item $\xi^{-}([u]_{W^{s, G}(\Omega)}) 
                \leq \Phi_{s, G}(u) \leq  
                \xi^{+}([u]_{W^{s, G}(\Omega)}).$
        \end{enumerate}
    \end{lemma}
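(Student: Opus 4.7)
My plan is to reduce both parts of the lemma to a single pointwise scaling argument, using the multiplicative bound \eqref{G product} together with the fact that $G$ satisfies the $\Delta_2$-condition (which follows from \eqref{H11}).

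\textbf{Step 1: Attainment of the modular value at the norm.} Assume $u \not\equiv 0$ (the case $u\equiv 0$ is trivial since both sides vanish). Let $\lambda \coloneqq \|u\|_{L^G(\Omega)}>0$. Because $G$ satisfies the $\Delta_2$-condition, the functional $\mu\mapsto\Phi_{G,\Omega}(u/\mu)$ is continuous in $\mu$ on $(0,\infty)$; combined with the definition of the Luxemburg norm as an infimum, this yields $\Phi_{G,\Omega}(u/\lambda)=1$. The same reasoning applied to the Gagliardo-type modular $\Phi_{s,G,\Omega}$ (which also enjoys $\Delta_2$-scaling via \eqref{G product}) gives $\Phi_{s,G,\Omega}(u/[u]_{W^{s,G}(\Omega)})=1$ when $[u]_{W^{s,G}(\Omega)}>0$.

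\textbf{Step 2: Pointwise scaling.} Writing $|u(x)|=\lambda\cdot\bigl(|u(x)|/\lambda\bigr)$, the right-hand inequality in \eqref{G product} gives
\[
    G(|u(x)|)\leq \max\{\lambda^{p^-},\lambda^{p^+}\}\,G\!\left(\frac{|u(x)|}{\lambda}\right)=\xi^+(\lambda)\,G\!\left(\frac{|u(x)|}{\lambda}\right),
\]
and the left-hand inequality in \eqref{G product} gives the analogous lower bound with $\xi^-(\lambda)$. Integrating over $\Omega$ and invoking $\Phi_{G,\Omega}(u/\lambda)=1$ yields
\[
    \xi^-(\|u\|_{L^G(\Omega)})\leq\Phi_{G,\Omega}(u)\leq\xi^+(\|u\|_{L^G(\Omega)}),
\]
which is part (1). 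For part (2), the identical argument applied pointwise to $G(|D_su(x,y)|)=G\bigl([u]_{W^{s,G}(\Omega)}\cdot |D_su|/[u]_{W^{s,G}(\Omega)}\bigr)$ and integrated against the measure $d\mu$ on $\Omega\times\Omega$ produces part (2).

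\textbf{Main potential obstacle.} The only non-routine point is the attainment claim in Step 1, i.e.\ that the infimum defining the Luxemburg (semi)norm is actually realized and equals $1$; this is where the $\Delta_2$-condition is essential, as without it the modular $\Phi_{G,\Omega}(u/\mu)$ could fail to be continuous (indeed could jump from $\leq 1$ to $+\infty$) as $\mu\downarrow \|u\|_{L^G(\Omega)}$. Since \eqref{H11} guarantees $\Delta_2$ for $G$, this issue does not arise; the remainder of the argument is a direct consequence of the multiplicative scaling \eqref{G product}.
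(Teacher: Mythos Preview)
The paper states this lemma without proof (it is a standard fact in Orlicz space theory, see e.g.\ \cite{BS}), so there is nothing to compare against. Your argument is correct: the $\Delta_2$-condition guaranteed by \eqref{H11} ensures the modular is continuous in the scaling parameter and hence attains the value $1$ at the Luxemburg (semi)norm, after which the multiplicative bounds \eqref{G product} immediately yield both inequalities.
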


\subsection{Notions of Solutions}
    Borrowing ideas from \cite{KKL}, 
    we first introduce the definition of  viscosity solution. For a given domain $\Omega \subset \mathbb{R}^n $ and $\beta >2$, we let
    
%
    \begin{equation*}
        C^2_\beta(\Omega)\coloneqq \left\{u\in C^2(\Omega)\colon \sup_{x\in \Omega}\left(\frac{\min\{d_u(x),1\}^{\beta-1}}{|\nabla u(x)|}+\frac{|D^2u(x)|}{d_u(x)^{\beta-2}}\right) < \infty\right\}
    \end{equation*}where $d_u(x) \coloneqq \text{dist}(x,N_u)$ and $N_u: = \left\{x\in \Omega \,:\, \nabla u(x) = 0\right\}$ represents the set of critical points of the function $u$. 
    \begin{definition}\label{dv}
	    We say that a function $u:\mathbb{R}^n\to[-\infty,+\infty]$ is a viscosity supersolution 
        (subsolution) of \eqref{problem} in $\Omega$ if: 
	    \begin{enumerate}
		    \item [(i)] $u<+\infty$ ($u>-\infty$) a.e. in $\mathbb{R}^n$, $u>-\infty$ ($u<+\infty$) 
		    a.e. in $\Omega$,
		    \item [(ii)] $u$ is lower (upper) semicontinuous in $\Omega$,
		    \item [(iii)] if $\psi\in C^2(B_r(x_0))\cap L_g(\mathbb{R}^n)$ for some 
		    $B_r(x_0)\subset \Omega$ such that $\psi(x_0)= u(x_0)$ and $\psi\leq u$ ($u\leq \psi$) in 
		    $\mathbb{R}^n$ and one of the following holds:
		    \begin{enumerate}[(a)]
		        \item $p^{-}>\frac{2}{2-s}$ or $\nabla \psi(x_0)\neq 0$,
		        \item $1<p^{-}\leq \frac{2}{2-s}$, $\nabla \psi(x_0)=0$ such that $x_0$ is an isolated 
                critial point of $\psi$ in $B_r(x_0)$, and $\psi\in C^2_\beta(B_r(x_0))$ for some 
                $\beta>\frac{sp^{-}}{p^{-}-1},$
		    \end{enumerate}
            then
		    \begin{equation*}
		        (-\Delta_g)^s\psi(x_0)\geq f(x_0,\psi(x_0),D_g^s\psi(x_0)),
		    \end{equation*}
		        \item [(iv)] $u_{-}$ ($u^+$) belongs to 
		        $L_g(\mathbb{R}^n)$.
	        \end{enumerate}
	        A viscosity solution of \eqref{problem} is a function $u$ which is a viscosity sub- and 
            supersolution of \eqref{problem}.
    \end{definition}
    \begin{remark}\label{u bdd}
        Observe that when $u \in L^\infty(\mathbb{R}^n)$, in Definition \ref{dv} (iii), we may define $\psi$ as $u$ outside the ball $B_r(x_0)$.
    \end{remark}
    \begin{remark}
        In Section \ref{previous results}, we shall show that $(-\Delta_g)^s\psi$ is well-defined for the class of test functions considered above.
    \end{remark}

 We now give the definition of weak solutions. 
\begin{definition}\label{dw}
    A function $u \in W^{s, G}(\Omega) \cap L_g(\mathbb{R}^{n})$ is 
    {\it a weak supersolution (subsolution) of \eqref{problem} }if
    \[
        \int_{\mathbb{R}^{n}}\int_{\mathbb{R}^{n}}
            g\left(D_su \right)
        D_s\psi
        \,d\mu \geq\,(\leq)\, \int_{\Omega}f(x, u, D_g^{s}u)\psi\,dx,
    \]
    for any non-negative $\psi \in C^{\infty}_0(\Omega)$. We say that $u$ is a weak solution if 
    it is a weak sub- and supersolution. 
 \end{definition}  
 
 Observe that by density, we may extend the above definition to test functions in $W_0^{s, G}(\Omega)$.
 
\subsection{Infimal convolutions}
    A standard smoothing operator in the theory of viscosity solutions is the infimal convolution. 
    
\begin{definition}\label{definfconv}
	Given $\varepsilon>0$, we define the infimal convolution of a function 
	$u\colon\mathbb{R}^n\to \mathbb{R}$ as 
	
	$$u_\varepsilon(x): = 
	\inf_{y\in\mathbb{R}^n}\left(u(y)+\frac{|x-y|^q}{q\varepsilon^{q-1}}\right)$$
	where $q=2$ if $p^{-}>\frac{2}{2-s}$ and $q>\frac{sp^{-}}{p^{-}-1}\geq 2$
	if $1<p^{-}\leq \frac{2}{2-s}$.
\end{definition}
    The infimal convolution is one of the main tools to prove that any
    viscosity solution is a weak solution. See, for instance \cite{BM,JJ}
    and the references therein.

\begin{lemma}[See \cite{JJ}]\label{propinfconv}
	Let $u$ be a bounded and lower semicontinuous function in $\mathbb{R}^n$.
	Then: 
	\begin{enumerate}[(i)]
		\item There exists $r(\varepsilon)>0$ such that 
		\[
		    u_\varepsilon(x) = \inf_{y\in B_{r(\varepsilon)}(x)}\left(u(y)+\frac{|x-y|^q}{q\varepsilon^{q-1}}\right)
		\]
		where $r(\varepsilon)\to 0$ as $\varepsilon\to 0$.
		
		\item The sequence $\{u_\varepsilon\}_{\varepsilon>0}$ is increasing as $\varepsilon\to 0$ and $u_\varepsilon\to u$ pointwise in $\mathbb{R}^n$.
		\item $u_\varepsilon$ is locally Lipschitz and twice differentiable a.e. Actually, for almost every $x,y\in \mathbb{R}^n$
		\[
		    u_\varepsilon(y) = u_\varepsilon(x)+\nabla u_\varepsilon(x)\cdot(x-y)+\frac{1}{2}D^2u_\varepsilon(x)(x-y)^2
		    +o(|x-y|^2) .
		\]

		\item $u_\varepsilon$ is semiconcave, that is, there exists a constant $C=C(q,\varepsilon,\text{osc}(u))>0$ such that the function $x\mapsto u_\varepsilon(x)-C|x|^2$ is concave. In particular 
		\[
	    	D^2u_\varepsilon(x)\leq 2CI,\quad \text{a.e. }\, x\in\mathbb{R}^n.
	    \]
		
		\item The set $Y_\varepsilon(x)\coloneqq\left\{y\in     B_{r(\varepsilon)}(x)\colon u_\varepsilon(x)=u(y)+\frac{|x-y|^q}{q\varepsilon^{q-1}} \right\}$ is non empty and closed for every $x\in\mathbb{R}^n$.
		
		\item If $\nabla u_\varepsilon(x) = 0$, then $u_\varepsilon(x) = u(x)$.
	\end{enumerate}
\end{lemma}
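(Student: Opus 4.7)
The overall plan is to exploit the fact that $u$ is bounded, so the penalty term $|x-y|^q/(q\varepsilon^{q-1})$ dominates for $y$ far from $x$. Let $M = \sup|u|$. For any $y$, the candidate value $u(y) + |x-y|^q/(q\varepsilon^{q-1})$ is at least $-M + |x-y|^q/(q\varepsilon^{q-1})$, while choosing $y=x$ gives the value $u(x) \leq M$. Hence no $y$ with $|x-y|^q > 2Mq\varepsilon^{q-1}$ can compete, and we may restrict the infimum to the closed ball of radius $r(\varepsilon) := (2Mq)^{1/q}\varepsilon^{(q-1)/q}$, which goes to $0$ with $\varepsilon$. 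This gives (i). Since $u$ is lower semicontinuous and the penalty is continuous, the restricted infimum is attained on the compact ball $\overline{B_{r(\varepsilon)}(x)}$, and the set of minimizers $Y_\varepsilon(x)$ is the (closed) level set of a lower semicontinuous function inside a compact, proving (v).

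For (ii), monotonicity in $\varepsilon$ is immediate: as $\varepsilon \downarrow 0$ the penalty $|x-y|^q/(q\varepsilon^{q-1})$ increases pointwise in $y$, so the infimum increases. For pointwise convergence, pick $y_\varepsilon \in Y_\varepsilon(x)$; then $y_\varepsilon \to x$ by (i), and $u_\varepsilon(x) \leq u(x)$ while $u_\varepsilon(x) = u(y_\varepsilon) + |x-y_\varepsilon|^q/(q\varepsilon^{q-1}) \geq u(y_\varepsilon)$. Taking $\liminf$ and using lower semicontinuity of $u$ yields $u_\varepsilon(x) \to u(x)$. The Lipschitz part of (iii) follows from an exchange argument: fixing a minimizer $y$ for $x_2$, the value $u(y) + |x_1-y|^q/(q\varepsilon^{q-1})$ is an admissible upper bound for $u_\varepsilon(x_1)$, and since $y$ ranges over a compact set independent of $x_1$ when $x_1$ stays in a bounded region, $|x-y|^q$ has bounded gradient, which gives local Lipschitzness.

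The twice-differentiability a.e. in (iii) follows from semiconcavity (iv) via Alexandrov's theorem, so the substantive task is (iv). Rewrite $u_\varepsilon(x) = \inf_{y \in \overline{B_{r(\varepsilon)}(x)}} \bigl(u(y) + \Phi_y(x)\bigr)$ with $\Phi_y(x) = |x-y|^q/(q\varepsilon^{q-1})$. When $q = 2$, $\Phi_y(x) - |x|^2/(2\varepsilon)$ is affine in $x$, so each $x \mapsto u(y) + \Phi_y(x) - C|x|^2$ with $C = 1/(2\varepsilon)$ is concave, and the infimum of concave functions is concave. When $q > 2$, compute $D^2\Phi_y(x) = \varepsilon^{1-q}\bigl(|x-y|^{q-2} I + (q-2)|x-y|^{q-4}(x-y)\otimes(x-y)\bigr)$, which is uniformly bounded above by a constant $C(q,\varepsilon,M)I$ on the relevant range $|x-y| \leq r(\varepsilon)$; this yields the semiconcavity constant. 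The expansion in (iii) then follows from Alexandrov's theorem applied to the semiconcave function $u_\varepsilon$.

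Finally, for (vi), pick $y_0 \in Y_\varepsilon(x)$. The inequality $u_\varepsilon(x+h) \leq u(y_0) + |x+h-y_0|^q/(q\varepsilon^{q-1})$ combined with $u_\varepsilon(x) = u(y_0) + |x-y_0|^q/(q\varepsilon^{q-1})$ shows that $h \mapsto u_\varepsilon(x+h) - u_\varepsilon(x) - \bigl(|x+h-y_0|^q - |x-y_0|^q\bigr)/(q\varepsilon^{q-1})$ is nonpositive near $0$ with maximum $0$ at $h=0$. At a point of differentiability, this forces $\nabla u_\varepsilon(x) = |x-y_0|^{q-2}(x-y_0)/\varepsilon^{q-1}$, and the hypothesis $\nabla u_\varepsilon(x) = 0$ gives $y_0 = x$, hence $u_\varepsilon(x) = u(x)$. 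The main technical obstacle I anticipate is the case $q > 2$ in (iv), where one must carefully check that the bound on $D^2\Phi_y$ remains uniform — the restriction to $\overline{B_{r(\varepsilon)}(x)}$ provided by (i) is precisely what makes this work and is why the equivalence between the unrestricted and restricted infima is established first.
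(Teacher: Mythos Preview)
The paper does not supply its own proof of this lemma: it is stated with the citation ``[See \cite{JJ}]'' and no argument is given. Your sketch follows the standard route (essentially that of Julin--Juutinen and the classical theory of inf-convolutions), and items (i), (ii), (v), (vi) and the Lipschitz part of (iii) are handled correctly.

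One point in (iv) deserves tightening. For $q>2$ you bound $D^2\Phi_y(x)$ on the set $|x-y|\le r(\varepsilon)$ and then invoke ``infimum of concave functions is concave''. As written this is not quite a proof: for that argument the index set must be fixed, whereas your representation $u_\varepsilon(x)=\inf_{y\in \overline{B_{r(\varepsilon)}(x)}}(\cdots)$ has an $x$-dependent index set, and if instead you keep the infimum over all $y\in\mathbb{R}^n$ then $\Phi_y(\cdot)-C|\cdot|^2$ is not concave on the whole space for any finite $C$ when $q>2$. The clean fix is to use the second-difference characterization of semiconcavity: for each $x$ pick $y_x\in Y_\varepsilon(x)$, so that
\[
u_\varepsilon(x+h)+u_\varepsilon(x-h)-2u_\varepsilon(x)\le \frac{1}{q\varepsilon^{q-1}}\bigl(|a+h|^q+|a-h|^q-2|a|^q\bigr),\qquad a:=x-y_x,\ |a|\le r(\varepsilon).
\]
For $|h|\le 1$ one controls the right-hand side by Taylor's theorem using $|a\pm h|\le r(\varepsilon)+1$; for $|h|\ge 1$ one simply uses that $u_\varepsilon$ is bounded, so the left-hand side is $\le 4\|u\|_{L^\infty}\le 4\|u\|_{L^\infty}|h|^2$. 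This gives the global semiconcavity constant $C=C(q,\varepsilon,\mathrm{osc}(u))$ as claimed, and then Alexandrov's theorem yields the a.e.\ second-order expansion in (iii). You clearly anticipated this issue in your closing remark; the paragraph above is how to close it.
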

 
 \section{Previous results}\label{previous results}
     In this section we shall provide preliminary results 
     to state the equivalence between weak and viscosity solutions of the equation
     \eqref{problem}.  {We divide the section into three parts: results related to viscosity 
    solutions, results regarding the weak formulation of 
    solutions and the last part is dedicated to certain continuity properties of $D_g^s$ and $(-\Delta_g)^s$.} {To prove the results, we borrow some calculations from \cite{KKL} and  \cite{BM}. However, due to some technical differences, we prove the results in detail.}
 
 \subsection{Results on viscosity solutions} 
    In this section, we prove that $(-\Delta_g)^s \psi$ is well-defined for the 
    test functions introduced in Definition \ref{dv}. Moreover, we will state the equations satisfied by the inf-convolution of a viscosity solution.
    
    We start with some 
    preliminary lemmas. 
    
	\begin{lemma}\label{lemma:PV1}
	    Let $\rho >0$ be such that $B_\rho (x)\subset D\subset\subset \Omega$ 
	    (with $D$ open),
	    and $u\in C^2(D).$ If $p^->\tfrac{2}{2-s}$ or 
	    $D\subset\subset\{d_u>0\}$
	    then there is  a positive constant $C_\rho$ independent of $x$ such that
	    \[
	        \left|\textnormal{P.V.} 
	        \int_{B_\rho(x)}g\left(D_su\right)
	        \frac{dy}{|x-y|^{n+s}}\right|\le C_\rho.
	    \]
	    Moreover $C_\rho\to0$ as $\rho\to 0^+.$
	\end{lemma}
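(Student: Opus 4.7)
The plan is to exploit the symmetry of the ball under $y\mapsto 2x-y$ to recast the principal value as an absolutely convergent integral, use a second-order Taylor expansion of $u$ to identify the cancellation, and estimate the remainder via the growth of $g$, with two different arguments corresponding to the two alternative hypotheses. Since $g$ is odd and the measure $|x-y|^{-n-s}dy$ is invariant under the reflection $y\mapsto 2x-y$, this change of variables gives
\[
    \textnormal{P.V.}\int_{B_\rho(x)} g(D_su)\,\frac{dy}{|x-y|^{n+s}} = \frac{1}{2}\int_{B_\rho(x)} \frac{g(a(y))-g(-b(y))}{|x-y|^{n+s}}\,dy,
\]
where $a(y) := (u(x)-u(y))/|x-y|^s$ and $b(y) := (u(x)-u(2x-y))/|x-y|^s$. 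A second-order Taylor expansion of $u$ at $x$ yields
\[
    |a(y)+b(y)| = \frac{|2u(x)-u(y)-u(2x-y)|}{|x-y|^s} \le \|D^2u\|_{L^\infty(B_\rho(x))}\,|x-y|^{2-s},
\]
while $|a(y)|,|b(y)|\le \|\nabla u\|_{L^\infty(B_\rho(x))}|x-y|^{1-s}$ remain uniformly bounded for $\rho\le 1$.

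Next I would estimate $|g(a)-g(-b)|$ in each of the two alternative cases. When $p^{-}>2/(2-s)$, inequality \eqref{H111} yields a H\"older-type modulus of continuity for $g$ on bounded sets, $|g(s_1)-g(s_2)|\le C|s_1-s_2|^{\min(1,p^{-}-1)}$, so
\[
    |g(a)-g(-b)|\le C\,|a+b|^{\min(1,p^{-}-1)}\le C\,|x-y|^{(2-s)\min(1,p^{-}-1)}.
\]
Integrating in polar coordinates produces $O(\rho^{(2-s)p^{-}-2})$ when $p^{-}\le 2$ (a strictly positive power by the standing hypothesis $p^{-}>2/(2-s)$) or $O(\rho^{2-2s})$ when $p^{-}\ge 2$, both tending to $0$ as $\rho\to 0^{+}$.

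In the alternative case $D\subset\subset\{d_u>0\}$, there is $\delta>0$ with $|\nabla u|\ge \delta$ on $\overline{D}$. I would apply the mean value theorem, $g(a)-g(-b)=g'(\xi)(a+b)$ for some $\xi$ between $a$ and $-b$. A first-order Taylor expansion shows that, to leading order in $r := |y-x|$, both $a(y)$ and $-b(y)$ equal $-\nabla u(x)\cdot(y-x)/|y-x|^s$, so they share the same sign for $r$ small and hence $|\xi(y)|\ge \tfrac{1}{2}|\nabla u(x)\cdot\omega|\,r^{1-s}$ with $\omega := (y-x)/r$. Combining $|g'(\xi)|\le C\max(|\xi|^{p^{-}-2},|\xi|^{p^{+}-2})$ (from \eqref{H1}--\eqref{H111}) with the Taylor bound on $|a+b|$, passage to polar coordinates produces
\[
    |I|\le C\int_{S^{n-1}}|\nabla u(x)\cdot\omega|^{p^{-}-2}\,d\sigma(\omega)\int_0^{\rho} r^{(1-s)p^{-}-1}\,dr \le C\rho^{(1-s)p^{-}},
\]
the angular integral being finite precisely because $p^{-}>1$. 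The main obstacle is exactly this second case when $1<p^{-}<2$: there $g'$ is singular at $0$ and $\xi$ can approach $0$ in the directions where $(y-x)$ is nearly orthogonal to $\nabla u(x)$, but the argument succeeds because the resulting singular angular factor is integrable over $S^{n-1}$ under the standing assumption $p^{-}>1$.
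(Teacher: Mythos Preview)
Your symmetrization and your first case ($p^->2/(2-s)$) are correct and in fact more economical than the paper's treatment: where the paper splits according to whether $\nabla u(x)=0$ (its Case~1) or not (its Case~2, via Lemma~\ref{lema:aux1} and then deferring to \cite[Lemma 3.6]{KKL}), you handle both at once through the H\"older modulus of continuity of $g$ near the origin, which does follow from \eqref{H1}--\eqref{H111}.

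The second case, however, has a genuine gap. The lower bound $|\xi(y)|\ge \tfrac{1}{2}|\nabla u(x)\cdot\omega|\,r^{1-s}$ is valid only when the linear term dominates the quadratic Taylor remainder, i.e.\ when $r\lesssim |\nabla u(x)\cdot\omega|/\|D^2u\|_\infty$. For directions $\omega$ nearly orthogonal to $\nabla u(x)$ this threshold is arbitrarily small, so the product formula
\[
\int_{S^{n-1}}|\nabla u(x)\cdot\omega|^{p^--2}\,d\sigma(\omega)\int_0^\rho r^{(1-s)p^--1}\,dr
\]
is not a bound for the whole of $B_\rho(x)$: in the ``bad'' region $\{r>|\nabla u(x)\cdot\omega|/(2C_0)\}$ the intermediate point $\xi$ can reach $0$, and for $p^-<2$ the estimate $|g'(\xi)|\le C|\xi|^{p^--2}$ is then vacuous. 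The argument can be repaired by treating that region separately---there both $|a|$ and $|b|$ are $O(r^{2-s})$, so $|g(a)-g(-b)|\le C\,r^{(2-s)(p^--1)}$ directly, and one checks (using only $p^->1$) that the resulting contribution is finite and $o(1)$ as $\rho\to 0$. The paper sidesteps this decomposition by using Lemma~\ref{lema:aux1}, which replaces the pointwise $|g'(\xi)|$ by the more robust weight $(|b|+|a+b|)^{p^\pm-2}$, and then invoking \cite[Lemma 3.6]{KKL}; that reference carries out essentially the good/bad angular splitting you would need here.
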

	\begin{proof}
	    {\it Case 1:} $\nabla u(x)=0$ and $p^{-}>\tfrac{2}{2-s}.$
	    
	    Since $u\in C^2(D),$ and $\nabla u(x)=0,$ for a given $\rho_0 >0$ so that $B_{\rho_0}(x) \subset D$,  there is a positive constant $C$
	    independent of $x$ and $y$ such that
	    \[
	        |u(x)-u(y)|\le C|x-y|^2
	    \]
	    for all  $y \in B_{\rho_0}(x)$. Then, for all $\rho \leq \rho_0$ so that $C\rho^{2-s} \leq 1$, we have
	    $$|D_s u| \leq C|x-y|^{2-s}\leq C\rho^{2-s} < 1.$$Hence, by \eqref{H111} that
	    \[
	        \left|g\left(D_s u\right)\right|\le 
	        g\left(D_s u \right)\le
	        \left(D_s u\right)^{p^--1}
	        \le C|x-y|^{(2-s)(p^--1)}.
	    \]
	    Therefore, we have
	    \[
	        \left|\text{P.V.} \int_{B_\rho(x)}g\left(D_s u\right)
	        \frac{dy}{|x-y|^{n+s}} \right|\le C \int_{B_\rho(x)}|x-y|^{(2-s)p^{-}-n-2} dy=C\rho^{(2-s)p^{-}-2}.
	    \]
	    Using that $p^->\tfrac{2}{2-s},$ it follows that
	    \[
	        C_\rho\coloneqq C\rho^{(2-s)p^{-}-2} \to 0,
	    \]
	    as $\rho \to 0$.

	    \medskip
	    
	    \noindent {\it Case 2:} $\nabla u(x)\neq 0.$
	    
	    Let $L(y)\coloneqq u(x)+\nabla u(x)\cdot (y-x).$ Observe that, since $g$ is odd, we
	    get
	    \[
	        \text{P.V.} \int_{B_\rho(x)\setminus B_\varepsilon(x)}g
	        \left(D_s L\right)
	        \frac{dy}{|x-y|^{n+s}}=0
	    \]
	    for any $\varepsilon<\rho.$ Then, for any $0<\varepsilon<\rho,$ taking
	    $B_{\varepsilon,\rho}\coloneqq B_\varepsilon(x)\setminus B_\rho(x)$ we have
	    \[
	         \left|\int_{B_{\varepsilon,\rho}}
	         g\left(D_s u\right)
	        \frac{dy}{|x-y|^{n+s}}\right|\leq 
	        \int_{B_{\varepsilon,\rho}}
	        \left|g\left(D_s u\right)-
	            g\left(D_s L\right)\right| \frac{dy}{|x-y|^{n+s}}.
	    \]
	    Thus, by Lemma \ref{lema:aux1}, there is a positive constant $C$ independent of
	    $x,\varepsilon,$ and $\rho$ such that
	    \begin{align*}
	        &\left|\int_{B_{\varepsilon,\rho}}
	        g\left(D_s u\right)
	        \frac{dy}{|x-y|^{n+s}}\right|\le\\
	        &\le C\int_{_{B_{\varepsilon,\rho}}}
	        \max\left\{ \dfrac{H(u,L,x,y)^{p^--2}}{|x-y|^{s(p^--2)}},  \dfrac{H(u,L,x,y)^{p^+-2}}{|x-y|^{s(p^+-2)}}\right\}
	        \dfrac{|u(y)-L(y)|}{|x-y|^{n+2s}}dy.
	    \end{align*}
	    where $H(u,L,x,y)\coloneqq |L(x)-L(y)|+|u(y)-L(y)|.$
	    Then 
	    \begin{align*}
	        &\left|\int_{B_{\varepsilon,\rho}}
	        g\left(D_s u\right)
	        \frac{dy}{|x-y|^{n+s}}\right|\le\\
	        &\le C\left\{\int_{_{B_{\varepsilon,\rho}}}
	        \dfrac{H(u,L,x,y)^{p^--2}}{|x-y|^{N+sp^-}}|u(y)-L(y)| dy
	        +\int_{_{B_{\varepsilon,\rho}}}
	        \dfrac{H(u,L,x,y)^{p^+-2}}{|x-y|^{N+sp^+}}|u(y)-L(y)| dy
	        \right\}.
	    \end{align*}
	    The rest of the proof follows the same lines as that of \cite[Lemma 3.6]{KKL}.
	\end{proof}

    \begin{lemma}\label{lemma:PV2}
        Let $1<p^-\le\tfrac{2}{2-s},$ $D\subset\Omega$ be an open set, and 
        $u\in C^2_{\beta}(D)$ with $\beta>\tfrac{sp^-}{p^--1}.$Then, for any 
        $\rho \in(0,1)$ such $B_\rho(x)\subset D$ and $x$ is such that $d_u(x)<\rho.$
        Then there is a positive constant $C_\rho$ independent of $x$ such that
        \[
	        \left|\text{P.V.} \int_{B_\rho(x)}g\left(D_s u\right)\frac{dy}{|x-y|^{n+s}}\right|\le C_\rho.
	    \]
	    Moreover $C_\rho\to0$ as $\rho\to 0^+.$
    \end{lemma}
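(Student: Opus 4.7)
The plan is to mimic the linearization strategy used in Case 2 of Lemma \ref{lemma:PV1}, but to extract quantitative bounds from the $C^2_\beta(D)$-regularity using the fact that $d_u(x)<\rho$ guarantees a nearby critical point. Concretely, since $d_u(x)<\rho$ there exists $z\in N_u$ with $|x-z|<\rho$, so $z\in B_\rho(x)\subset D$; and for every $y\in B_\rho(x)$ one has $d_u(y)\le |y-z|\le 2\rho$. Using $\beta>2$ (which is a consequence of $\beta>\tfrac{sp^-}{p^--1}$ together with $p^-\le\tfrac{2}{2-s}$), the $C^2_\beta$-bound yields $|D^2 u(y)|\le C\rho^{\beta-2}$ on $B_\rho(x)$. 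Integrating this Hessian bound along the segment from $z$ to $x$, and using $\nabla u(z)=0$, gives
\[
|\nabla u(x)|=|\nabla u(x)-\nabla u(z)|\le C\rho^{\beta-1}.
\]

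I would then introduce the linearization $L(y):=u(x)+\nabla u(x)\cdot (y-x)$. Because $g$ is odd and $B_\rho(x)$ is symmetric about $x$, $\text{P.V.}\int_{B_\rho(x)} g(D_sL)\,\frac{dy}{|x-y|^{n+s}}=0$. Applying Lemma \ref{lema:aux1} exactly as in the proof of Lemma \ref{lemma:PV1}, I arrive at
\[
\left|\text{P.V.}\!\int_{B_\rho(x)}\! g(D_s u)\,\frac{dy}{|x-y|^{n+s}}\right|
\le C\!\!\sum_{p\in\{p^-,p^+\}}\!\int_{B_\rho(x)}\!\frac{H^{p-2}}{|x-y|^{n+sp}}\,|u(y)-L(y)|\,dy,
\]
where $H=H(u,L,x,y)=|L(x)-L(y)|+|u(y)-L(y)|$.

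Now comes the exponent bookkeeping. Taylor's theorem yields $|u(y)-L(y)|\le C\rho^{\beta-2}|x-y|^2$, while $|L(x)-L(y)|\le|\nabla u(x)|\,|x-y|\le C\rho^{\beta-1}|x-y|$, so that $H\le C\rho^{\beta-1}|x-y|$ on $B_\rho(x)$. Substituting these into the integral for $p\in\{p^-,p^+\}$ and computing $\int_{B_\rho(x)}|x-y|^{p(1-s)-n}dy=C\rho^{p(1-s)}$ gives a contribution bounded by $C\rho^{\beta(p-1)-sp}$. The hypothesis $\beta>\tfrac{sp^-}{p^--1}$ is exactly what makes this exponent positive for $p=p^-$; since $t\mapsto st/(t-1)$ is decreasing on $(1,\infty)$ and $p^+>p^-$, positivity automatically also holds for $p=p^+$. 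Defining $C_\rho$ as the sum of the two contributions gives the required bound with $C_\rho\to 0$ as $\rho\to 0^+$.

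The main obstacle I expect is purely bookkeeping: the cancellation mechanism (the PV of $g(D_sL)$ vanishing, and Lemma \ref{lema:aux1} allowing one to integrate the difference) is inherited from Lemma \ref{lemma:PV1}, so the novelty lies entirely in verifying that the threshold $\beta>\tfrac{sp^-}{p^--1}$ is precisely what balances the three small parameters ($|\nabla u(x)|$, $\|D^2 u\|_{L^\infty(B_\rho(x))}$, and the size of $B_\rho(x)$) so that the resulting exponent $\beta(p^--1)-sp^-$ is strictly positive. A potential technicality is that $N_u$ need not be closed, but since the distance to $N_u$ is strictly less than $\rho$ we only need existence — not attainment — of $z\in N_u$ with $|x-z|<\rho$, which is immediate.
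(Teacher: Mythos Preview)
Your linearization strategy and the identification of the threshold $\beta>\tfrac{sp^-}{p^--1}$ are correct and match the paper's approach. But there is a genuine gap in the ``exponent bookkeeping'' step. In this lemma $p^-\le\tfrac{2}{2-s}<2$ always holds, so the exponent $p^--2$ is \emph{negative}. Consequently, substituting the upper bound $H\le C\rho^{\beta-1}|x-y|$ into $H^{p^--2}$ gives the inequality in the wrong direction: it yields a \emph{lower} bound on $H^{p^--2}$, not the upper bound you need. Your final integral estimate for $p=p^-$ is therefore unjustified. (For $p=p^+\ge 2$ your computation is fine.)

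This is not merely a cosmetic issue: if one tries to repair it by the obvious trick $H^{p^--2}|u(y)-L(y)|\le H^{p^--1}$ and then uses $H\le C\rho^{\beta-1}|x-y|$, the resulting radial integrand $|x-y|^{p^--1-sp^--n}$ is \emph{not} integrable at the origin, since $p^-(1-s)<1$ in this range. The paper avoids this by splitting into two cases. When $\nabla u(x)=0$ it uses the pointwise bound $|u(x)-u(y)|\le C|x-y|^{\beta}$ (which follows from integrating the $C^2_\beta$ Hessian estimate along the segment, using $d_u(x)=0$) together with \eqref{H111} directly, bypassing Lemma \ref{lema:aux1} entirely and producing the integrable exponent $(\beta-s)(p^--1)-s$. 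When $\nabla u(x)\neq 0$, the paper reaches the same two integrals you write but then defers to the argument of \cite[Lemma 3.7]{KKL}, which passes to spherical coordinates and splits the radial integral at the scale $r\sim d_u(x)$; only this finer decomposition balances the blow-up of $H^{p^--2}$ against the quadratic smallness of $|u(y)-L(y)|$. Your uniform bound $|D^2u|\le C\rho^{\beta-2}$ on $B_\rho(x)$ is too coarse for this: you need the pointwise estimate $|D^2u(y)|\le C d_u(y)^{\beta-2}$ on the segment.
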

    \begin{proof}
        {\it Case 1:} $\nabla u(x)=0.$ 
	    
	    Since $u\in C^2_\beta (D),$ and $\nabla u(x)=0,$ there is a positive constant $C$
	    independent of $x$ such that
	    \[
	        |u(x)-u(y)|\le C|x-y|^\beta
	    \]
	    in $B_\rho(x),$ for all $\rho$ small enough.  Then, as in the proof of Lemma \ref{lemma:PV1}, using \eqref{H111}, we have that
	    \[
	        \left|g\left(D_s u\right)\right|\le 
	        g\left(D_s u\right)\le
	        \left(D_s u\right)^{p^--1}
	        \le C|x-y|^{(\beta-s)(p^--1)}.
	    \]
	    Therefore, using that  $\beta>\tfrac{sp^{-}}{p^{-}-1},$ we have
	    \[
	        \left|\text{P.V.} \int_{B_\rho(x)}g\left(D_s u\right)
	        \frac{dy}{|x-y|^{n+s}} \right|\le C 
	        \int_{B_\rho(x)}|x-y|^{(\beta-s)p^{-}-\beta}dy = C\rho^{(\beta-s)p^{-}-\beta} \eqqcolon C_\rho\to 0
	    \]as $\rho \to 0$.

	    \medskip
	    
	    \noindent {\it Case 2:} $\nabla u(x)\neq 0.$
	    Now, proceeding as Case 2 in the proof of Lemma \ref{lemma:PV1}, we have that
        there is a constant $C$ independent of $x$ and $\rho$ such that
          \begin{align*}
	        &\left|\int_{B_{\varepsilon,\rho}}g\left(D_s u\right)
	        \frac{dy}{|x-y|^{n+s}}\right|\le\\
	        &\le C\left\{\int_{_{B_{\varepsilon,\rho}}}
	        \dfrac{H(u,L,x,y)^{p^--2}}{|x-y|^{N+sp^-}}|u(y)-L(y)| dy
	        +\int_{_{B_{\varepsilon,\rho}}}
	        \dfrac{H(u,L,x,y)^{p^+-2}}{|x-y|^{N+sp^+}}|u(y)-L(y)| dy
	        \right\},
	    \end{align*}
	    where $L(x)\coloneqq u(x)+\nabla u(x)\cdot (y-x)$ and 
	    $B_{\varepsilon,\rho}= B_\varepsilon(x)\setminus B_\rho(x)$
	    The rest of the proof follows the same line as that of \cite[Lemma 
	    3.7]{KKL}.
    \end{proof}
    
    \begin{remark}
        It is worth mentioning that, as far as we know, the notion of viscosity 
        solution is new for the case
        $1<p^- \le\tfrac{2}{2-s}.$ See, for instance, \cite{Mayte}.
    \end{remark}
    
   \begin{remark} \label{PVwelldefined}
        Lemmas \ref{lemma:PV1} and \ref{lemma:PV2} prove that
        the principal values are well-defined for functions $\psi$ that 
        are smooth enough. If additionally $\psi \in L_g(\mathbb{R}^n)$, then 
        for $x \in \mathbb{R}^n$ and $0<\rho <1$, we have
        \begin{equation*}
            \int_{\mathbb{R}^n\setminus 
            B_\rho(x)}g
            \left(\dfrac{\psi(x)-\psi(y)}{|x-y|^s} 
            \right)\dfrac{dy}{|x-y|^{n+s}} < \infty.
        \end{equation*}Indeed, take $R>0$ such that 
        $B_{\rho}(x)\subset B_R$. Then, using Lemma A.5 
        from \cite{BSV} it holds that
        $$
            |x-y|\geq \frac{1+R}{\rho}(1+|y|),\quad 
            y\in\mathbb{R}^n\setminus B_{\rho}(x).
        $$
       
        Therefore,
        \begin{equation*}
            \begin{split}
                \int_{\mathbb{R}^n\setminus 
                B_\rho(x)}&g\left(\frac{|\psi(x)-\psi(y)|}
                {|x-y|^s}\right)\frac{dy}{|x-y|^{n+s}} \\
                &\leq \left(\frac{\rho}{1+R}\right)^{n+s}\int_{\mathbb{R}^n
                \setminus B_{\rho}(x)}g\left(\left(\frac{\rho}{1+R}\right)^s
                \frac{|\psi(x)|+|\psi(y)|}{1+|y|^s}\right)\frac{dy}{1+|y|^{n+s}}\\& \leq 
                C{\left(\frac{\rho}{1+R}\right)^{n+sp^{-}}}\int_{\mathbb{R}^n
                \setminus B_{\rho}(x)}g\left(\frac{|\psi(x)|+|\psi(y)|}{1+|y|^s}\right)
                \frac{dy}{1+|y|^{n+s}}< \infty,
            \end{split}
        \end{equation*}
        {where in the last inequality we have used \eqref{gg product}, Lemma \ref{tineqg} and 
        the fact that the constant function $\psi(x)$ and $\psi$ are in $L_g(\mathbb{R}^n)$.}
        Therefore, we get that $(-\Delta_g)^s \psi$ is well-defined for the test functions 
        considered in Definition \ref{dv}.
    \end{remark}

    \medskip

    Next, we will prove that when $u$ is a viscosity solution, then it is also 
    the case for $u_\varepsilon$ with a slightly different equation.

   \begin{lemma}\label{infconvsol}
	    Let $u:\mathbb{R}^n\to\mathbb{R}$ be a bounded and lower semicontinuous function in 
	    $\mathbb{R}^n$, and $f=f(x,t,\eta)$ 
	    be continuous in 
	    $\Omega\times\mathbb{R}\times\mathbb{R}$ and non increasing in $t$. If $u$ is a viscosity 
	    supersolution of 
	    \[
	        (-\Delta_g)^s u = f(x,u, D_g^s u)\quad \text{in }\; \Omega
        \]
        then its infimal convolution $u_\varepsilon$ is a viscosity supersolution of 
        \begin{equation}\label{eqinfconv}
	        (-\Delta_g)^s u_\varepsilon 
	        = f_\varepsilon(x,u_\varepsilon,D_g^su_\varepsilon)
	        \quad \text{in }\; \Omega_{r(\varepsilon)},
	    \end{equation}
	    where $\Omega_{r(\varepsilon)}\coloneqq\left\{x\in \Omega\colon 
	    \textnormal{dist}(x,\partial\Omega)>r(\varepsilon) \right\}$ and 
    	\begin{equation}\label{fepsilon}
	        f_\varepsilon(x,t,\eta) : = \inf_{y\in 
	        B_{r(\varepsilon)}(x)}f(y,t,\eta).
	    \end{equation}
	    Moreover, 
	    \begin{equation}\label{ineqinfconv}
	        (-\Delta_g)^su_\varepsilon(x)\geq 
	        f_\varepsilon(x,u_\varepsilon(x),D_g^su_\varepsilon(x))\quad 
	        \text{a.e. }\, x\in\Omega_{r(\varepsilon)}.
	    \end{equation}
    \end{lemma}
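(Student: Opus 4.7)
The plan is to adapt the standard infimal-convolution technique from the viscosity-solution literature to the present nonlocal Orlicz setting. The argument splits into two parts: first, verifying the viscosity supersolution property of $u_\varepsilon$ with respect to \eqref{eqinfconv}; second, upgrading it to the pointwise inequality \eqref{ineqinfconv} holding almost everywhere.

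For the supersolution property, fix $x_0\in\Omega_{r(\varepsilon)}$ and a test function $\varphi\in C^2(B_r(x_0))\cap L_g(\mathbb{R}^n)$ touching $u_\varepsilon$ from below at $x_0$ and satisfying alternative (a) or (b) of Definition \ref{dv}. By Lemma \ref{propinfconv}(v) I pick $y_0\in Y_\varepsilon(x_0)\subset B_{r(\varepsilon)}(x_0)$, so that $u_\varepsilon(x_0)=u(y_0)+|x_0-y_0|^q/(q\varepsilon^{q-1})$; moreover $y_0\in\Omega$ because $x_0\in\Omega_{r(\varepsilon)}$. I then translate the test function by setting
\[
\psi(y)\coloneqq \varphi\bigl(y+(x_0-y_0)\bigr)-\frac{|x_0-y_0|^q}{q\varepsilon^{q-1}}, \qquad y\in\mathbb{R}^n.
\]
A direct computation gives $\psi(y_0)=u(y_0)$, and the definition of the infimal convolution applied at the shifted point $y+(x_0-y_0)$ yields $\psi\leq u$ on $\mathbb{R}^n$. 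Since translation preserves $C^2$ regularity, gradients, Hessians, critical-point structure and $L_g$-integrability up to bounded factors, $\psi$ is admissible at $y_0$ in Definition \ref{dv}. Using the change of variables $w=y+(x_0-y_0)$ in the principal value one checks that $(-\Delta_g)^s\psi(y_0)=(-\Delta_g)^s\varphi(x_0)$ and $D_g^s\psi(y_0)=D_g^s\varphi(x_0)$, so the viscosity supersolution inequality for $u$ gives
\[
(-\Delta_g)^s\varphi(x_0)\geq f\bigl(y_0,u(y_0),D_g^s\varphi(x_0)\bigr).
\]
Monotonicity of $f$ in the second argument, combined with $u(y_0)\leq u_\varepsilon(x_0)=\varphi(x_0)$ and $y_0\in B_{r(\varepsilon)}(x_0)$, then bounds the right-hand side below by $f_\varepsilon(x_0,\varphi(x_0),D_g^s\varphi(x_0))$, as required. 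The remaining clauses of Definition \ref{dv} for $u_\varepsilon$ (local Lipschitz continuity from Lemma \ref{propinfconv}(iii), and $(u_\varepsilon)_-\in L_g(\mathbb{R}^n)$) follow from the boundedness of $u$.

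For the pointwise inequality \eqref{ineqinfconv}, I exploit that by Lemma \ref{propinfconv}(iv) $u_\varepsilon$ is semiconcave, hence by Alexandrov's theorem admits a second-order Taylor expansion at almost every $x\in\Omega_{r(\varepsilon)}$. At such a point, for small $\delta,\rho>0$ I define
\[
\varphi_{\delta,\rho}(y)\coloneqq u_\varepsilon(x)+\nabla u_\varepsilon(x)\cdot(y-x)+\tfrac12 D^2u_\varepsilon(x)(y-x)^2-\delta|y-x|^2 \quad\text{on } B_\rho(x),
\]
and extend $\varphi_{\delta,\rho}\equiv u_\varepsilon$ outside $B_\rho(x)$; this is permissible thanks to Remark \ref{u bdd} since $u_\varepsilon$ is bounded. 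For $\rho$ sufficiently small depending on $\delta$, the Taylor expansion guarantees $\varphi_{\delta,\rho}\leq u_\varepsilon$ on $\mathbb{R}^n$ with equality at $x$. Applying the viscosity inequality just established to $\varphi_{\delta,\rho}$ and letting first $\delta\to 0^+$ and then $\rho\to 0^+$, the continuity properties of $(-\Delta_g)^s$ and $D_g^s$ proved in the forthcoming subsection of Section \ref{previous results} deliver \eqref{ineqinfconv}.

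The main obstacle is the degenerate regime $1<p^-\leq 2/(2-s)$ at points where $\nabla u_\varepsilon(x)=0$: there the test function $\varphi_{\delta,\rho}$ must fit into the class $C^2_\beta(B_\rho(x))$ with $x$ an isolated critical point, which requires either perturbing its linear part by an infinitesimal vector to force the nondegenerate alternative (a), or performing a careful local analysis of the quadratic polynomial to land in alternative (b). A secondary delicate point is that the limit in $\delta$ and $\rho$ depends on the stability of the nonlocal operators $(-\Delta_g)^s$ and $D_g^s$ under local perturbations of the test function, which is precisely what the continuity results of the next subsection are designed to provide.
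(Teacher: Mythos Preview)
Your first part (the viscosity supersolution property of $u_\varepsilon$) is correct and essentially the paper's argument, just organized slightly differently: the paper passes through the intermediate translates $\phi_z(x)=u(x+z)+|z|^q/(q\varepsilon^{q-1})$ and shows each is a supersolution, then uses $u_\varepsilon=\inf_z\phi_z$; you translate the test function directly. The two are equivalent.

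In the second part there is a genuine gap in the degenerate regime $1<p^-\le 2/(2-s)$ with $\nabla u_\varepsilon(x)=0$. Neither of your proposed fixes works. Perturbing the linear part of $\varphi_{\delta,\rho}$ by a small nonzero vector destroys the touching condition at $x$ (since $\nabla u_\varepsilon(x)=0$), so the resulting function is no longer admissible. And a quadratic polynomial with a critical point at $x$ cannot belong to $C^2_\beta(B_\rho(x))$ for $\beta>2$: its Hessian is a nonzero constant while $d_{\varphi_{\delta,\rho}}(y)\to 0$ as $y\to x$, so $|D^2\varphi_{\delta,\rho}(y)|/d_{\varphi_{\delta,\rho}}(y)^{\beta-2}\to\infty$. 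Since $\beta>sp^-/(p^--1)\ge 2$ in this range, alternative (b) is unavailable for your test function.

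The paper resolves this by a different mechanism that exploits the special structure of the infimal convolution. By Lemma \ref{propinfconv}(vi), $\nabla u_\varepsilon(x)=0$ forces $u_\varepsilon(x)=u(x)$, and the very definition of $u_\varepsilon$ gives $u(x)\le u(y)+|x-y|^q/(q\varepsilon^{q-1})$ for all $y$. One then tests $u$ itself (not $u_\varepsilon$) at $x$ with
\[
\zeta_r(y)=
\begin{cases}
u(x)-\dfrac{|x-y|^q}{q\varepsilon^{q-1}} & y\in B_r(x),\\[4pt]
u_\varepsilon(y) & y\notin B_r(x),
\end{cases}
\]
which satisfies $\zeta_r(x)=u(x)$, $\zeta_r\le u$, and belongs to $C^2_q(B_r(x))$ with $q>sp^-/(p^--1)$, so it is admissible under alternative (b). Lemma \ref{lemma:PV2} controls the principal value on $B_r(x)$, and letting $r\to 0$ yields \eqref{ineqinfconv}. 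This use of the $q$-th power cone coming from the infimal convolution itself is the missing idea.

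A smaller point: the ``continuity properties in the forthcoming subsection'' (Lemmas \ref{Continuidad D}, \ref{ContPert}, \ref{contop}) concern perturbations by bump functions and continuity in the base point; they are not what drives the $\rho\to 0$ limit here. The relevant tools are the principal-value estimates of Lemmas \ref{lemma:PV1} and \ref{lemma:PV2}, which show directly that the contribution of the test function on $B_\rho(x)$ is $o_\rho(1)$.
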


    \begin{proof}
    	Let $z\in B_{r(\varepsilon)}(0)$ and define 
    	\[
    	    \phi_z(x)\coloneqq u(x+z)+\frac{|z|^q}{q\varepsilon^{q-1}},\quad x\in\mathbb{R}^n.
        \]
        Then we claim that $\phi_z$ is a viscosity supersolution of \eqref{eqinfconv}. Indeed, since $u$ 
        is bounded and lower semicontinuous, $\phi_z$ satisfies items (i), (ii) and (iv) from Definition \ref{dv}.

    	Now take $x_0\in \Omega_{r(\varepsilon)}$ and let 
    	$\varphi\in C^2(B_r(x_0))\cap L_g(\mathbb{R}^n)$ such that $\varphi(x_0)=\phi_z(x_0)$, 
    	$\varphi\leq \phi_z$ in $\mathbb{R}^n$ with $B_r(x_0)\subset \Omega_{r(\varepsilon)}$ and 
    	$\varphi$ satisfies (a) or (b) from Definition \ref{dv}. We put 
    	$y_0\coloneqq z+x_0$. Then $y_0\in B_{r(\varepsilon)}(x_0)\subset \Omega$ since $z\in 
    	B_{r(\varepsilon)}(0)$ and $x_0\in \Omega_{r(\varepsilon)}$. Now define 
    	$$
    	    \widetilde{\varphi}(y) \coloneqq \varphi(y-z)-\frac{|z|^q}{q\varepsilon^{q-1}}.
    	$$
    	Observe that $\varphi(\xi) = \widetilde{\varphi}(\xi+z)+\frac{|z|^q}{q\varepsilon^{q-1}}.$ Therefore
    	\begin{equation}\label{ineqphi}
    	\begin{split}
    	(-\Delta_g)^s\varphi(x_0) & = \text{P.V.}     \int_{\mathbb{R}^{n}}g\left(\frac{\varphi(x_0)
    	-\varphi(\xi)}{|x_0-\xi|^s}\right)\frac{d\xi}{|x_0-\xi|^{n+s}} \\&= 	
    	\text{P.V.} 
    	\int_{\mathbb{R}^{n}}g\left(\frac{\widetilde{\varphi}(y_0)
    	-\widetilde{\varphi}(\xi+z)}{|y_0-z-\xi|^s}\right)
    	\frac{d\xi}{|y_0-z-\xi|^{n+s}} \\& = P.V. 
    	\int_{\mathbb{R}^{n}}g\left(\frac{\widetilde{\varphi}(y_0)
    	-\widetilde{\varphi}(\xi)}{|y_0-\xi|^s}\right)
    	\frac{d\xi}{|y_0-\xi|^{n+s}} = (-\Delta_g)^s\widetilde{\varphi}(y_0).
    	\end{split}
    	\end{equation}
    	
    	On the other hand, 
    	$$
    	    \widetilde{\varphi}(y_0) = \varphi(x_0)-\frac{|z|^q}{q\varepsilon^{q-1}} 
    	    = \phi_z(x_0)-\frac{|z|^q}{q\varepsilon^{q-1}} = u(y_0)
    	 $$
    	 and
    	$$
    	    \widetilde{\varphi}(y)=\varphi(x)-\frac{|z|^q}{q\varepsilon^{q-1}}\leq 
    	    \phi_z(x)-\frac{|z|^q}{q\varepsilon^{q-1}} = u(y).
         $$ 
    	Thus, by \eqref{ineqphi} and the fact that $u$ is a viscosity supersolution 
    	of \eqref{problem} in $\Omega$ we get 
    	\begin{equation*}
    	    \begin{split}
    	        (-\Delta_g)^s\varphi(x_0) 
    	        & = (-\Delta_g)^s\widetilde{\varphi}(y_0)\geq 
    	        f(y_0,\widetilde{\varphi}(y_0),D_g^s\widetilde{\varphi}(y_0))  = 
    	        f(x_0+z,\varphi(x_0)-\tfrac{|z|^q}{q\varepsilon^{q-1}},D_g^s
    	        \widetilde{\varphi}(y_0))\\&\geq 
    	        f(x_0+z,\varphi(x_0),D_g^s\widetilde{\varphi}(y_0)),
    	    \end{split}
    	\end{equation*}where in the last inequality we used that $f$ is non 
    	increasing in the second variable.
    	
    	Now, reasoning as in \eqref{ineqphi} it holds that $D_g^s\widetilde{\varphi}(y_0) = D_g^s\varphi(x_0)$. Moreover, since $x_0+z\in B_{r(\varepsilon)}(x_0)$, the definition of $f_\varepsilon$ yields 
    	\begin{equation*}
    	(-\Delta_g)^s\varphi(x_0)  \geq f_\varepsilon(x_0,\varphi(x_0),D_g^s\varphi(x_0)).
    	\end{equation*}Hence, for every $z\in B_{r(\varepsilon)}(0)$, $\phi_z$ is a viscosity supersolution of \eqref{eqinfconv}.
    	
    	Now we check that $u_\varepsilon$ satisfies Definition \ref{dv} for the equation 
    	\eqref{eqinfconv}. Again, since $u\in L^\infty(\mathbb{R}^n)$, 
    	$u_\varepsilon$ fulfills conditions (i) and (iv). Moreover, from (iii) in 
    	Lemma \ref{propinfconv} $u_\varepsilon$ is locally Lipschitz, so assumption 
    	(ii) from Definition \ref{dv} is also satisfied. 
    
        We verify now condition (iii). Take $\psi\in C^2(B_r(x_0))\cap 
        L_g(\mathbb{R}^n)$ such that $B_r(x_0)\subset \Omega_{r(\varepsilon)}$, 
        $\psi(x_0)=u_\varepsilon(x_0)$, $\psi\leq u_\varepsilon$ in $\mathbb{R}^n$ 
        and $\psi$ satisfies (a) or (b). By (i) and (v) 
        in Lemma \ref{propinfconv} we can write
        \[
            u_\varepsilon(x) = \inf_{y\in B_{r(\varepsilon)}(x)}\left(u(y)+\frac{|x-y|^q}{q\varepsilon^{q-1}}\right) = \inf_{z\in B_{r(\varepsilon)}(0)}\phi_z(x),\;\; x\in\mathbb{R}^n,
        \]
        and there is $\overline{z}\in B_{r(\varepsilon)}(0)$ such that 
        $u_\varepsilon(x_0)=\phi_{\overline{z}}(x_0)$. Moreover, by definition 
        $\psi\leq u_\varepsilon\leq \phi_{\overline{z}}$ in $\mathbb{R}^n$. Thus we 
        can employ $\psi$ as a test function for the problem satisfied by 
        $\phi_{\overline{z}}$ and get
        \[
            (-\Delta_g)^s\psi(x_0)\geq f_\varepsilon(x_0,\psi(x_0),D_g^s\psi(x_0)).
        \]
        Therefore $u_\varepsilon$ is a viscosity supersolution of \eqref{eqinfconv}. 
    
        Now we prove \eqref{ineqinfconv}. By (iii) from Lemma \ref{propinfconv} we 
        can fix $x\in \Omega_{r(\varepsilon)}$ such that $u_\varepsilon$ is twice 
        differentiable at $x$. We first assume that $p^{-}>\frac{2}{2-s}$ or $\nabla 
        u_\varepsilon (x) \neq 0$. Take $r>0$ such that $B_r(x)\subset 
        \Omega_{r(\varepsilon)}$ and define 
        $$
            \psi_\delta(y)\coloneqq  u_\varepsilon(x)+\nabla 
            u_\varepsilon(x)(x-y)+\frac{1}{2}(D^2u_\varepsilon(x)-\delta I)(x-y)^2,
        $$
        with $\delta>0$ and $I$ the identity matrix. Notice that $\psi_\delta(x) = 
        u_\varepsilon(x)$ and $\psi_\delta\in C^2(B_r(x))$.  Consider now the 
        function 
        $$
            \psi_r(y)\coloneqq \left\{\begin{array}{ll}
            \psi_\delta(y) & \text{if }\, y\in B_r(x),\\
            u_\varepsilon(y) & \text{if }\, y\in\mathbb{R}^n\setminus B_r(x).
            \end{array} \right.
        $$
        Then, for $\delta>0$ big enough, $\psi_r\leq u_\varepsilon$. Also 
        $\psi_r\in L_g(\mathbb{R}^n)$ since $\psi_\delta$ is bounded in $B_r(x)$ 
        and $u_\varepsilon\in L_g(\mathbb{R}^n)$. Finally observe that 
        $\nabla\psi_r(x) = \nabla u_\varepsilon(x)$. Therefore we can use 
        $\psi_r$ as a test function for the problem solved 
        by $u_\varepsilon$ and get 
        $$
            (-\Delta_g)^s\psi_r(x)
            \geq f_\varepsilon(x,\psi_r(x),D_g^s\psi_r(x)).
        $$
    
        Now observe that $\psi_\delta\in C^2(\mathbb{R}^n)$, hence $\psi_\delta\in 
        C^2(B_1(x))$. Then, for $y\in B_1(x)$, 
        \[
            |\psi_\delta(x)-\psi_\delta(y)|\leq \sup_{z\in B_1(x)}|\nabla \psi_\delta(z)||x-y| = C(\psi)|x-y|. 
        \]
        Thus by  \eqref{ineqGab} we have for $0<r<1$  
        \begin{equation*}
            \begin{split}
                \int_{B_r(x)}G\left(|D_s\psi_\delta|\right)\frac{dy}{|x-y|^n} & \leq 
                \int_{B_r(x)}G(C|x-y|^{1-s})\frac{dy}{|x-y|^n} \\
                &\leq 
                G(C)\int_{B_r(x)}|x-y|^{-n+1-s}\,dy = C(n,s,\psi)r^{1-s}.
            \end{split}
        \end{equation*}
        Therefore
        \begin{equation*}
            \begin{split}
                D_g^s\psi_r(x)  & = 
                \int_{\mathbb{R}^{n}}G\left(|D_s\psi_r|\right)\frac{dy}{|x-y|^n}
                 = \int_{B_r(x)}G\left(|D_s\psi_\delta|\right)\frac{dy}{|x-y|^n} + 
                \int_{\mathbb{R}^n\setminus B_r(x)}G\left(|D_s 
                u_\varepsilon|\right)\frac{dy}{|x-y|^n}\\& = O(r^{1-s}) + 
                \int_{\mathbb{R}^{n}\setminus B_r(x)}G\left(|D_s 
                u_\varepsilon|\right)\frac{dy}{|x-y|^n}.
            \end{split}
        \end{equation*}
        Then 
        \begin{equation}\label{limDgspsi}
            \lim_{r\to 0}D_g^s\psi_r(x) = D_g^su_\varepsilon(x).
        \end{equation}
        
        On the other hand, note that $\nabla u_\varepsilon (x)\neq 0$ implies that 
        $B_r(x)\Subset \{d_{\psi_\delta}>0\}$ for $r$ small enough, since 
        $\psi_{\delta}\in C^2(\mathbb{R}^n)$ and $\nabla\psi_{\delta} (x) = \nabla 
        u_\varepsilon(x)$. 
        Hence, by Lemma \ref{lemma:PV1} 
        \begin{equation*}
            \begin{split}
                (-\Delta_g)^s\psi_r(x) & = \int_{\mathbb{R}^{n}}g\left(D_s 
                \psi_r\right)\frac{dy}{|x-y|^{n+s}} \\& = 
                \int_{\mathbb{R}^{n}\setminus B_r(x)}g\left(D_s 
                u_\varepsilon\right)\frac{dy}{|x-y|^{n+s}} 
                +\int_{B_r(x)}g\left(D_s\psi_\delta\right)\frac{dy}{|x-y|^{n+s}}\\& 
                \leq \int_{\mathbb{R}^{n}\setminus B_r(x)}g\left(D_s 
                u_\varepsilon\right)\frac{dy}{|x-y|^{n+s}}+o_r(1) 
            \end{split}
        \end{equation*}where $o_r(1)\to 0$ as $r\to 0$. Then
        \begin{equation}\label{ineq0}
            \int_{\mathbb{R}^{n}\setminus B_r(x)}g\left(D_s 
            u_\varepsilon\right)\frac{dy}{|x-y|^{n+s}}  \geq 
            (-\Delta_g)^s\psi_r(x)-o_r(1) \geq 
            f_\varepsilon(x,\psi_r(x),D_g^s\psi_r(x))-o_r(1).
        \end{equation}
        Passing to the limit as $r\to 0$ in \eqref{ineq0} 
        and using \eqref{limDgspsi} we get 
        \begin{equation*}
            (-\Delta_g)^su_\varepsilon(x)  
            = \text{P.V.} \int_{\mathbb{R}^{n}}g\left(D_s u_\varepsilon
            \right)\frac{dy}{|x-y|^{n+s}}  
            \geq f_\varepsilon(x,u_\varepsilon(x),D_g^su_\varepsilon(x)).
        \end{equation*}
    
        Now consider the case $1<p^{-}\leq \frac{2}{2-s}$ and $\nabla 
        u_\varepsilon(x) =     0$. By (vi) from Lemma \ref{propinfconv}  we have 
        \[
            u(x) = u_\varepsilon(x)\leq u(y)+\frac{|x-y|^q}{q\varepsilon^{q-1}},\;        \text{for all }\, y\in\mathbb{R}^n, \, q>\frac{sp^{-}}{p^{-}-1}\geq     2.
        \]
        Define 
        $$
            \zeta_r(y) \coloneqq 
            \left\{\begin{array}{ll}
            u(x)-\frac{|x-y|^q}{q\varepsilon^{q-1}}& y\in B_r(x),  \\
             u_\varepsilon(y)& y\in\mathbb{R}^n\setminus B_r(x) 
            \end{array}\right.
        $$
        Then $\zeta_r\in C^2_q(B_r(x))\cap L_g(\mathbb{R}^n)$ and     clearly 
        $\zeta_r(x) = u(x)$ and $\zeta_r\leq u$. Therefore we can use $\zeta_r$     
        as a test function for the problem solved by $u$ and get 
        \begin{equation}\label{ineqzetar}
            (-\Delta)_g^s\zeta_r(x)\geq f(x,\zeta_r(x),D_g^s\zeta_r(x))\geq 
            f_\varepsilon(x,u_\varepsilon(x),D_g^s\zeta_r(x)).
        \end{equation}
    
        By Lemma \ref{lemma:PV2} it holds that
        \begin{equation*}
            (-\Delta)_g^s\zeta_r(x)   \leq \int_{\mathbb{R}^n\setminus B_r(x)} 
            g\left(D_s u_\varepsilon\right)\frac{dy}{|x-y|^{n+s}} + o_r(1).
        \end{equation*}On the other hand, by \eqref{ineqGab} we have for $0<r<1$
        \begin{equation*}
            \begin{split}
                \int_{B_r(x)}G\left(|D_s\zeta_r|\right)\frac{dy}{|x-y|^n} & = 
                \int_{B_r(x)}G\left(\frac{|x-y|^q/q\varepsilon^{q-1}}{|x-y|^s}\right)
                \frac{dy}{|x-y|^n} \\& \leq  
                \int_{B_r(x)}G\left(\frac{1}{q\varepsilon^{q-1}}\right)|x-y|^{q-s-n}
                \,dy  \leq C(q,\varepsilon, n, s) r^{q-s} 
            \end{split}
        \end{equation*}
        and
        \begin{equation*}
            D_g^s\zeta_r(x)  = \int_{\mathbb{R}^n\setminus B_r(x)}G\left(|D_s 
            u_\varepsilon|\right)\frac{dy}{|x-y|^{n}} + O(r^{q-s}).
        \end{equation*}
        Thus, passing to the limit in \eqref{ineqzetar} 
        we get \eqref{ineqinfconv}.
    \end{proof}
 
 \subsection{Results on weak solutions}
    Our first result regarding weak solutions is a Caccioppoli type estimate. 
    \begin{proposition}\label{caccio} 
        Let $f \in C(\Omega \times \mathbb{R}\times \mathbb{R})$ satisfy 
        \eqref{growth f} and $u \in L^{\infty}(\mathbb{R}^{n})$ be a weak supersolution of
            \eqref{problem}. Then, there is a positive constant $C=C(p, K, \varphi)$ 
            such that
            \begin{equation}\label{cacci}
                    \int_K\int_{\mathbb{R}^{n}}G\left(|D_su| \right)G(\xi(x))d\mu 
                    \leq C\left[ G\left(osc(u)\right)\left(\int_K\int_{\mathbb{R}^{n}}
                    G\left(|D_s\xi| \right)d\mu + \gamma_{\infty, u}\right) + osc(u)\right],
            \end{equation}
            for all $\xi \in C_0^{\infty}(\Omega)$, $\xi \in [0, 1]$, 
            where $K=\textnormal{supp}(\xi),$ and 
            \[
                \gamma_{\infty,u}\coloneqq
                \max\{\gamma(t)\colon t\in[-\|u\|_{L^\infty(\mathbb{R}^n)}, \|u\|_{L^\infty(\mathbb{R}^n)}]\}.
            \]  
    \end{proposition}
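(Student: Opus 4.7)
My plan is to test the weak supersolution inequality against a non-negative function of the form $\psi=(M-u)\,G(\xi)$, split the resulting double integral into a coercive piece (producing the left-hand side of \eqref{cacci}) and a cross piece (absorbed into the left up to a $G(|D_s\xi|)$ remainder), and finally control the source-term contribution via the growth condition \eqref{growth f}.

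Concretely, I would set $M\coloneqq\operatorname{ess\,sup}_{\mathbb{R}^n} u$ (finite since $u\in L^\infty$) and $w\coloneqq M-u\in[0,osc(u)]$, and take $\psi\coloneqq w\,G(\xi)\ge 0$. Since $\xi\in C^\infty_0(\Omega)$ and $w\in L^\infty$, $\psi$ is supported in $K$ and belongs to $W^{s,G}_0(\Omega)$ by density, hence is admissible in the extended Definition \ref{dw}. From the identity $\psi(x)-\psi(y)=w(x)(G(\xi(x))-G(\xi(y)))+G(\xi(y))(w(x)-w(y))$ and $w(x)-w(y)=-(u(x)-u(y))$ we obtain $D_s\psi=w(x)\,D_s(G(\xi))-G(\xi(y))\,D_su$, and substitution gives
\begin{equation*}
\int\!\!\int G(\xi(y))\,g(D_su)\,D_su\,d\mu \le \int\!\!\int w(x)\,g(D_su)\,D_s(G(\xi))\,d\mu - \int_\Omega f(x,u,D_g^su)\,\psi\,dx.
\end{equation*}
The left-hand side is $\ge p^-\int\!\int G(\xi(y))\,G(|D_su|)\,d\mu$ by \eqref{H11}, which after the symmetry swap $x\leftrightarrow y$ produces the quantity on the left of \eqref{cacci}.

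For the cross term I estimate $|D_s(G(\xi))|\le C(g(\xi(x))+g(\xi(y)))|D_s\xi|$ via \eqref{inq G and g}, and apply the $\delta$-Young inequality \eqref{Young} with the factorization $a=g(|D_su|)g(\xi)$ and $t=w\,|D_s\xi|$. Lemma \ref{G g} yields $\tilde G(g(t))\le (p^+-1)G(t)$ for both slots, and combined with the $\Delta'$-condition on $\tilde G$ (the standing assumption whenever Proposition \ref{caccio} is invoked in the main theorems),
\begin{equation*}
\tilde G\bigl(g(|D_su|)g(\xi)\bigr) \le C\,\tilde G(g(|D_su|))\,\tilde G(g(\xi)) \le C\,G(|D_su|)\,G(\xi).
\end{equation*}
For $\delta$ small the resulting $\delta C\int\!\int G(|D_su|)G(\xi)\,d\mu$ is absorbed into the coercive lower bound, while the remaining $\delta^{-p^+}\int\!\int G(w|D_s\xi|)\,d\mu$ is controlled by $C\,G(osc(u))\int\!\int G(|D_s\xi|)\,d\mu$ via \eqref{G product} and $w\le osc(u)$.

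Finally, \eqref{growth f} together with $|u|\le\|u\|_\infty$ gives $|f|\le\gamma_{\infty,u}\tilde G^{-1}(|D_g^su|)+\|\phi\|_\infty$, so the $\phi$-contribution is bounded by $\|\phi\|_\infty\,osc(u)\,|K|$, producing the $osc(u)$ term on the right of \eqref{cacci}, and $\gamma_{\infty,u}\int_K\tilde G^{-1}(|D_g^su|)\,w\,G(\xi)\,dx$ is controlled by the Young bound $\tilde G^{-1}(s)\,G(\xi)\le s+G(G(\xi))$ combined with $w\le osc(u)$, generating the $G(osc(u))\,\gamma_{\infty,u}$ term. The main obstacle I anticipate is the Orlicz-algebraic book-keeping in the cross-term absorption: matching the $G(\xi)$-weight on both the coercive and cross integrals hinges crucially on the $\Delta'$-condition of $\tilde G$ so that $\tilde G(g(|D_su|)g(\xi))$ splits multiplicatively into $G(|D_su|)G(\xi)$, and the passage from $G(w|D_s\xi|)$ to $G(osc(u))G(|D_s\xi|)$ must be performed via \eqref{G product} without appealing to a $\Delta'$-condition on $G$ (which is not assumed here).
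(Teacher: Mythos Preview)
Your approach is essentially the paper's: the same test function $(\sup u-u)G(\xi)$, the same splitting into a coercive piece and a cross piece, the same use of \eqref{inq G and g} followed by Young's inequality with the $\Delta'$-condition on $\tilde G$ and Lemma \ref{G g} to absorb the cross term. The only cosmetic difference is that you pivot on $G(\xi(y))$ and $w(x)$ where the paper pivots on $G(\xi(x))$ and $w(y)$; the $x\leftrightarrow y$ symmetry you invoke handles this.

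There is, however, one genuine slip in your treatment of the source term. The Young bound you write, $\tilde G^{-1}(s)\,G(\xi)\le s+G(G(\xi))$, strips the weight $G(\xi)$ off the $s=|D_g^su|$ piece, leaving you with $\gamma_{\infty,u}\,osc(u)\int_K |D_g^su|\,dx=\gamma_{\infty,u}\,osc(u)\int_K\int_{\mathbb{R}^n}G(|D_su|)\,d\mu$. This integral carries no $G(\xi(x))$ factor, so it cannot be absorbed into the coercive left-hand side $p^-\int\!\int G(|D_su|)G(\xi)\,d\mu$ (there is no smallness parameter either), and it is not a priori finite. The paper avoids this by first using \eqref{H11} to replace $G(\xi)$ by $\xi\,g(\xi)\le g(\xi)$, then applying the $\delta$-Young inequality to the product $[\tilde G^{-1}(|D_g^su|)g(\xi)]\cdot[osc(u)]$: the $\tilde G$-side is handled via the $\Delta'$-condition and Lemma \ref{G g} exactly as you do for the cross term, yielding $\delta C\,|D_g^su|\,G(\xi)$, which \emph{does} carry the weight and is absorbable; the $G$-side produces $C_\delta\,G(osc(u))|K|$, which is the $G(osc(u))\gamma_{\infty,u}$ contribution you were aiming for. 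So the fix is to reuse, for the source term, the very same $\Delta'$ + Lemma \ref{G g} mechanism you already deploy for the cross term.
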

    \begin{proof}
        Let $\xi \in C_0^{\infty}(\Omega)$, $\xi \in [0, 1]$, and take $K=\text{supp}(\xi)$. 
        Define
        \[
	        \varphi(x)\coloneqq	
	        \begin{cases}
	            \left(\sup\limits_{\mathbb{R}^{n}}u-u(x)\right)G(\xi(x))  
	                &\text{if } x\in \Omega,\\[7pt]
	            0\quad  &\text{if } x \in \mathbb{R}^{n}\setminus \Omega.
	        \end{cases}
	    \]
	    Observe that
	    \[
	        \varphi(x)-\varphi(y)=-(u(x)-u(y))G(\xi(x)) + 
	        (G(\xi(x))-G(\xi(y)))\left(\sup_{\mathbb{R}^{n}}u-u(y)\right).
	    \]
	    for any $x,y\in \mathbb{R}^n.$ Then, since $u$ is a weak supersolution, we have
    	\begin{equation}\label{eq 1}
        	\begin{aligned}
                &\int_\Omega f(x, u, D_g^{s}u)\varphi\,dx  \leq 	
                \int_{\mathbb{R}^{n}}\int_{\mathbb{R}^{n}} g\left(D_s u\right) 
                 D_s\varphi \, d\mu \\ 
                & = - \int_{\mathbb{R}^{n}}\int_{\mathbb{R}^{n}} 
                    g\left(D_s u\right)D_su\,G(\xi(x)) \,d\mu + \int_{\mathbb{R}^{n}}\int_{\mathbb{R}^{n}}  g\left(D_s 
                    u\right)
                D_s(G\circ\xi)\left(\sup_{\mathbb{R}^{n}}u-u(y)\right)\,d\mu.
            \end{aligned}
	    \end{equation}
	    
	    Now, since $g$ is odd, we have that $g(t)t = g(|t|)|t|.$ Hence, using the inequality  \eqref{H11}, we have
	    \begin{align*}
	       \int_{\mathbb{R}^{n}}\int_{\mathbb{R}^{n}} 
	        g\left(D_s u\right)D_s u \,G(\xi(x))
	        \,d\mu &= \int_{\mathbb{R}^{n}}\int_{\mathbb{R}^{n}} 
	        g\left(|D_s u|\right)|D_s u| G(\xi(x))
	        \,d\mu \\
	         &\geq p^{-}\int_{\mathbb{R}^{n}}\int_{\mathbb{R}^{n}} G\left(|D_s u|\right) 
	         G(\xi(x))\,d\mu.
	    \end{align*}
	    Thus, from \eqref{eq 1} it follows that
	    \begin{equation}\label{est 1}
	        \int_{\mathbb{R}^{n}}\int_{\mathbb{R}^{n}} G\left(|D_s u|\right) G(\xi(x))
	        \,d\mu \le (I)-(II),
	    \end{equation}
	    where
	    \[
	        (I)=\int_{\mathbb{R}^{n}}\int_{\mathbb{R}^{n}}  
	            g\left(D_s u\right)\,
	            D_s(G\circ\xi)\left(\sup_{\mathbb{R}^{n}}u-u(y)\right)\,d\mu,
	    \quad \text{ 
	    and }\quad
	        (II)=\int_\Omega f(x, u, D_g^{s}u)\varphi\,dx.
    	\]
	
	    We first treat the integral $(I)$. By \eqref{inq G and g} and \eqref{Young}, we have
	    \begin{equation}\label{I-1}
	        \begin{split}
	           (I) &\leq C\int_{\mathbb{R}^{n}}\int_{\mathbb{R}^{n}}  g\left(|D_s u|\right)
	                |D_s \xi|
	                (g(\xi(x))+ g(\xi(y)))\,osc(u)\,d\mu \\ 
	            & \leq C\int_{K}\int_{\mathbb{R}^{n}}  g(|D_s u|)g(\xi(x))|D_s \xi|
	                \,osc(u)\,d\mu \\ 
	            & \leq C \left[\delta \int_{K}\int_{\mathbb{R}^{n}} \tilde{G}
	                \left( g(|D_s u|)g(\xi(x))\right)d\mu +  C_\delta 
	                G\left(osc(u)\right)\int_{K}\int_{\mathbb{R}^{n}} G\left( |D_s\xi|\right) 
	                d\mu\right].
	                \end{split}
	                \end{equation}By the $\Delta'$-condition for $\tilde{G}$ and Lemma \ref{G g},
	                \begin{equation}\label{I-2}
	        \begin{split}
                  \int_{K}\int_{\mathbb{R}^{n}} \tilde{G}
	                \left( g(|D_s u|)g(\xi(x))\right)d\mu & \leq C
	                \int_{K}\int_{\mathbb{R}^{n}} \tilde{G}\left( g(|D_s 
	                u|)\right)\tilde{G}\left(g(\xi(x))\right)d\mu \\ 
	           & \leq C\int_{K}\int_{\mathbb{R}^{n}} G( |D_s u|)G(\xi(x))d\mu.  \end{split}
	    \end{equation}As a result, from \eqref{I-1} and \eqref{I-2}, there holds
	    \begin{equation}\label{I d}
	        (I) \leq C\left[ \delta\int_{K}\int_{\mathbb{R}^{n}} G( |D_s u|)G(\xi(x))d\mu +  C_\delta 
	                G\left(osc(u)\right)\int_{K}\int_{\mathbb{R}^{n}} G\left( |D_s\xi|\right) 
	                d\mu\right].
	    \end{equation}

	    Now, we estimate the term  (II) in \eqref{est 1}. By the assumption \eqref{growth f}, we get
	    \begin{equation}\label{II-1}
    	    \begin{split}
	            (II) &\leq \gamma_{\infty, u}\int_K \tilde{G}^{-1}(|D_g^{s} u|)\,osc(u)G(\xi(x))\,dx + 
	            \|\phi\|_{L^{\infty}(\Omega)}|K|osc(u)
	            \end{split}
	   \end{equation}
	    Now, by \eqref{H11} and \eqref{Young}
	            \begin{equation}\label{II-2}
    	    \begin{split}
    	   \int_K \tilde{G}^{-1}(|D_g^{s} u|)\,osc(u)G(\xi(x))\,dx  & \leq \int_K \tilde{G}^{-1}(|D_g^{s} u|)\,osc(u)\xi(x)g(\xi(x))\,dx \\
	            & \leq \delta\int_K \tilde{G}\left( 
	                \tilde{G}^{-1}(|D_g^{s}u|)g(\xi(x))\right)\,dx + 
	                C_\delta G(osc(u))|K| \\ & = \delta\int_K|D_g^{s}u|\tilde{G}(g(\xi(x)))\,dx + 
	                C_\delta G(osc(u))|K|.
	                	            \end{split}
	            \end{equation}
	            Moreover, by Lemma \ref{G g}, it follows that
	     \begin{equation}\label{II-3}
	            \int_K|D_g^{s}u|\tilde{G}(g(\xi(x))) 
	             \leq \int_K|D_g^{s}u|G(\xi(x))\,dx 
	                =\int_K\int_{\mathbb{R}^{n}}G(|D_g^{s}u|)G(\xi(x))d\mu
	    \end{equation}
	    Thus, combining  \eqref{II-1}-\eqref{II-3}, we get
	    \begin{equation}\label{II d}
	        (II) \leq \gamma_{\infty, u}\left( \delta \int_K\int_{\mathbb{R}^{n}}G(|D_g^{s}u|)G(\xi(x))d\mu + C_\delta G(osc(u))|K|\right)+ \|\phi\|_{L^{\infty}(\Omega)}|K|osc(u).
	    \end{equation}From \eqref{I d} and \eqref{II d}, and  choosing $\delta$ small enough, we derive \eqref{cacci}.
    \end{proof}
 
 The next lemma treats the convergence of the sources $f_\varepsilon$.

    \begin{lemma}\label{conv f}
        Let $u \in W^{s, G}(\Omega) \cap L^{\infty}(\Omega)$. Suppose that $f=f(x, t, \eta)$ is uniformly continuous in $\Omega \times 
        \mathbb{R}\times \mathbb{R}$, 
        Lipschitz continuous in $\eta$, and  satisfies 
        \eqref{growth f}. 
        Let $\psi \in C^{\infty}_0(\Omega)$, 
        $\psi \geq 0$ with $K=supp(\psi) \subset \Omega$. If
        \begin{equation}\label{assumption}
            \lim_{\varepsilon \to 0}
            \int_K\int_{\mathbb{R}^{n}}G\left(\frac{|u_\varepsilon(y)-u_\varepsilon
            (x)-(u(x)-u(y))|}{|x-y|^{s}}\right)d\mu =0,
        \end{equation}then 
        \begin{equation}
            \lim_{\varepsilon \to 0}\int_K f_\varepsilon (x, u_\varepsilon, 
            D_g^{s}u_\varepsilon)\psi\,dx = \int_K f(x, u, D_g^{s}u)\psi\,dx.
        \end{equation}
    \end{lemma}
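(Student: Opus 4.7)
The plan is a telescoping decomposition of the difference integrand. I would write
\[
f_\varepsilon(x,u_\varepsilon,D_g^su_\varepsilon)-f(x,u,D_g^su)=A_\varepsilon(x)+B_\varepsilon(x)+C_\varepsilon(x),
\]
with
\[
A_\varepsilon:=f_\varepsilon(x,u_\varepsilon,D_g^su_\varepsilon)-f(x,u_\varepsilon,D_g^su_\varepsilon),\qquad B_\varepsilon:=f(x,u_\varepsilon,D_g^su_\varepsilon)-f(x,u,D_g^su_\varepsilon),
\]
and $C_\varepsilon:=f(x,u,D_g^su_\varepsilon)-f(x,u,D_g^su)$, and show that each of $\int_K A_\varepsilon\psi\,dx$, $\int_K B_\varepsilon\psi\,dx$, $\int_K C_\varepsilon\psi\,dx$ vanishes as $\varepsilon\to 0$.

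The pieces $A_\varepsilon$ and $B_\varepsilon$ should be handled by soft arguments. By the definition \eqref{fepsilon} and the (joint) uniform continuity of $f$ with some modulus $\omega_f$, one has $|A_\varepsilon|\le\omega_f(r(\varepsilon))$ uniformly on $K$, and this tends to $0$ since $r(\varepsilon)\to 0$ by Lemma~\ref{propinfconv}(i). For $B_\varepsilon$, uniform continuity applied only in the second slot yields $|B_\varepsilon|\le\omega_f(|u_\varepsilon-u|)$; Lemma~\ref{propinfconv}(ii) gives pointwise convergence $u_\varepsilon\to u$, and the inequality $|u_\varepsilon-u|\le 2\|u\|_{L^\infty(\Omega)}$ (valid on $K$ for $\varepsilon$ small, since the inf-convolution balls around $K$ lie in $\Omega$) forces $\omega_f(|u_\varepsilon-u|)$ to be dominated by a constant, so dominated convergence on the finite-measure set $K$ closes this piece.

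The heart of the proof is $C_\varepsilon$. The Lipschitz hypothesis on $f$ in $\eta$ reduces the matter to proving $D_g^su_\varepsilon\to D_g^su$ in $L^1(K)$. I would start from the pointwise bound
\[
|D_g^su_\varepsilon(x)-D_g^su(x)|\le\int_{\mathbb{R}^n}\bigl|G(|D_su_\varepsilon|)-G(|D_su|)\bigr|\,\frac{dy}{|x-y|^n},
\]
apply \eqref{inq G and g} together with the triangle inequality $\bigl||D_su_\varepsilon|-|D_su|\bigr|\le|D_s(u_\varepsilon-u)|$, and then Young's inequality \eqref{Young} with parameter $\delta$ and Lemma~\ref{G g}, to arrive at the upper bound
\[
C\delta\int_K\int_{\mathbb{R}^n}\bigl[G(|D_su|)+G(|D_su_\varepsilon|)\bigr]\,d\mu+C_\delta\int_K\int_{\mathbb{R}^n}G(|D_s(u_\varepsilon-u)|)\,d\mu.
\]
A convexity estimate $G(|D_su_\varepsilon|)\le C\bigl[G(|D_s(u_\varepsilon-u)|)+G(|D_su|)\bigr]$ coming from \eqref{G product}, combined with hypothesis \eqref{assumption}, should render the first bracket uniformly bounded in $\varepsilon$, while the last integral is $o(1)$; sending $\varepsilon\to 0$ and then $\delta\to 0$ finishes. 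The step I expect to be the main obstacle is verifying the finiteness of the background quantity $\int_K\int_{\mathbb{R}^n}G(|D_su|)\,d\mu$, since $u$ is only assumed in $W^{s,G}(\Omega)$: I would split it into $K\times\Omega$, bounded by $\Phi_{s,G,\Omega}(u)<\infty$, and $K\times(\mathbb{R}^n\setminus\Omega)$, where $\text{dist}(K,\partial\Omega)>0$ yields an integrable kernel and the $L^\infty(\Omega)$-bound on $u$ together with the ambient tail assumptions provide the remaining control.
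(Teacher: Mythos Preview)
Your proposal is correct and follows essentially the same route as the paper: a three-term telescope handling the $x$-shift via uniform continuity, the $t$-shift via dominated convergence on $K$, and the $\eta$-shift via the Lipschitz hypothesis together with a mean-value bound on $|G(|D_su_\varepsilon|)-G(|D_su|)|$, Young's inequality \eqref{Young}, Lemma~\ref{G g}, and assumption \eqref{assumption}, finishing by sending $\varepsilon\to0$ then $\delta\to0$. The only cosmetic difference is the order of the last two pieces---the paper freezes $D_g^su$ first and then passes $u_\varepsilon\to u$ using the growth bound \eqref{growth f} for domination, whereas you pass $u_\varepsilon\to u$ with $D_g^su_\varepsilon$ in place using only the modulus $\omega_f$; both orderings work.
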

    \begin{proof}
        Let $\varepsilon >0$, $\psi$ and $K$ as in the statement. By the uniformly continuity of $f$, for every $\rho >0$, 
        there exists $\delta >0$ such that
        $$
            |f(x, u_\varepsilon, D_g^{s} u_\varepsilon)-f(y, u_\varepsilon, D_g^{s} u_\varepsilon)| \leq \rho, \quad y \in B_\delta(x).
        $$
        Hence,
        \begin{equation}
            \int_K |f(x, u_\varepsilon, D_g^{s} u_\varepsilon)-f_\varepsilon(y, 
            u_\varepsilon, D_g^{s} u_\varepsilon)|\psi\,dx \leq \rho 
            \|\psi\|_{L^{\infty}(K)}|K|. 
        \end{equation}
        Since $\|u_\varepsilon\|_{L^{\infty}} \leq C$ for all $\varepsilon$, 
        it follows that 
        
        $$
            \max_{[-\|u_\varepsilon\|_{L^{\infty}},
            \|u_\varepsilon\|_{L^{\infty}}]}|\gamma(t)| 
            \leq 
            \max_{[-\|u\|_{L^{\infty}}, \|u\|_{L^{\infty}}]}|\gamma(t)|,
        $$
        and then we have
        $$
            |f(x, u_\varepsilon, D_g^{s} u)| \leq C\tilde{G}^{-1}(|D_g^{s} u|)+\varphi(x) \in L^{\tilde{G}}(K) \subset L^{1}(K)
        $$
        for a constant $C$ independent of $\varepsilon$. Then, by Lebesgue Convergence Theorem,
         \begin{equation}\label{part i}
            \lim_{\varepsilon \to 0}\int_K f(x, u_\varepsilon, D_g^{s} u)\psi\,dx
            = \int_Kf(x, u, D_g^{s} u)\psi\,dx.
        \end{equation}
        Moreover, the Lipschitz assumption of $f$ in $\eta$ gives
        \begin{equation}\label{eqq 29}
         \begin{split}
                 \int_K |f(x, u_\varepsilon, D_g^{s} u_\varepsilon)-
                 & f(x, u_\varepsilon, D_g^{s} u)|\psi\,dx   \leq  C\int_K | D_g^{s} u_\varepsilon- D_g^{s} u|\,dx \\ & \leq C\int_K\int_{\mathbb{R}^{n}}\bigg|G\left(\frac{|u_\varepsilon(x)-u_\varepsilon(y)|}{|x-y|^{s}} \right)-G\left(\frac{|u(x)-u(y)|}{|x-y|^{s}}\right) \bigg|\,d\mu \\ & \leq C\int_K\int_{\mathbb{R}^{n}}\frac{|h_\varepsilon(x, y)|}{|x-y|^{s}}g\left(\frac{|u(x)-u(y)|+ h_\varepsilon(x, y)}{|x-y|^{s}} \right)\,d\mu,
            \end{split}
        \end{equation}
        where, in the last inequality, we have used 
        $$|G(a+b)-G(b)| \leq |b|g(|a|+|b|), \quad a, b \in \mathbb{R},$$ 
        and
        \[
            h_\varepsilon(x, y) = u_\varepsilon(x)-u_\varepsilon(y)-(u(x)-u(y)),\,\,x \in K,\,y \in \mathbb{R}^{n}.
        \]
        Using Young's inequality with $\delta > 0$ and Lemma \ref{G g}, we get
        \begin{equation}\label{eqq 30}
            \begin{split}
                 \int_K\int_{\mathbb{R}^{n}}&\frac{|h_\varepsilon(x, y)|}{|x-y|^{s}}g\left(\frac{|u(x)-u(y)| + h_\varepsilon(x, y)}{|x-y|^{s}} \right)\,d\mu\\ &  \leq  C_\delta\int_K\int_\mathbb{R^n}G\left(\dfrac{|h_\varepsilon(x, y)|}{|x-y|^s}\right)\,d\mu  + \delta\int_K\int_\mathbb{R^n}G\left(\frac{|u(x)-u(y)|+ h_\varepsilon(x, y)}{|x-y|^{s}} \right)\,d\mu.   
            \end{split}
         \end{equation}
         Observe that by the inequality $G(s+t) \leq C(G(s)+G(t))$ 
         and the assumption \eqref{assumption}, the integrals 
        $$
            \int_K\int_\mathbb{R^n}G\left(\frac{|u(x)-u(y)|
            + h_\varepsilon(x, y)}{|x-y|^{s}} \right)\,d\mu
        $$
        remains uniformly bounded. Hence, taking limsup as $\varepsilon\to 0$ in 
        \eqref{eqq 30}, using \eqref{assumption}, and then $\delta \to 0$, we get
        from \eqref{eqq 29} that
        \begin{equation}\label{part ii}
            \int_K |f(x, u_\varepsilon, D_g^{s} u_\varepsilon)-f(x, 
            u_\varepsilon, D_g^{s} u)|\psi\,dx \to 0 \quad 
            \text{ as }\varepsilon\to 0.
        \end{equation}
        Then, combining \eqref{part i} and \eqref{part ii}, we have
        \begin{equation*}
            \begin{split}
                &  \lim_{\varepsilon \to 0}\int_K |f_\varepsilon(x, u_\varepsilon,   
                D_g^{s}u_\varepsilon)-f(x, u, D_g^{s} u)|\psi\,dx \\ & \leq     
                \lim_{\varepsilon \to 0}\int_K|f_\varepsilon(x, u_\varepsilon, 
                D_g^{s}u_\varepsilon)-f(x, u_\varepsilon, D_g^{s} u)|\psi\,dx  +  
                \int_K|f(x, 
                u_\varepsilon, D_g^{s}u)-f(x, u_\varepsilon, D_g^{s} u)|\psi\,dx  =0.
            \end{split}
    \end{equation*}This concludes the proof. 
\end{proof}

In the next two results we will study the relation between the weak and the pointwise formulation of solutions. We distinguish two cases:
$p^{-}> \tfrac{2}{2-s}$ and $1 < p^{-} \leq  \tfrac{2}{2-s}.$
 
\begin{lemma}\label{lemma 1 weak point} 
    Assume $p^{-}> \frac{2}{2-s}$ and let $u \in L_g(\mathbb{R}^{n})\cap 
    L^{\infty}(\mathbb{R}^{n})$. Then, for all $\psi \in 
    C_0^{\infty}(\Omega_{r(\varepsilon)})$, $\psi \geq 0$, we have
    \begin{equation}
        \int\int_{Q_K}g\left(D_s u_\varepsilon\right) D_s\psi\,d\mu \geq \int_K\left( 
        (-\Delta_g)^{s}u_\varepsilon \right)\psi\,dx
    \end{equation}
    where $u_\varepsilon$ is the infimal convolution of $u,$ $K$ is the support of $\psi,$
    and
    \[
        Q_K\coloneqq(K\times K) \cup \left[(\mathbb{R}^{n}\setminus K) \times K \right] \cup \left[ K \times (\mathbb{R}^{n}\setminus K)\right].
    \]
\end{lemma}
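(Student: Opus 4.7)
The plan is to exploit two symmetries of the integrand: $d\mu$ is symmetric under $x\leftrightarrow y$, while $g(D_s u_\varepsilon)$ is antisymmetric, because $g$ is odd and $D_s u_\varepsilon$ changes sign under the swap. Together these allow the bilinear form on $Q_K$ to be reduced to a single-variable integral of $\psi$ against the principal value that defines $(-\Delta_g)^s u_\varepsilon$. Since the principal value converges only conditionally at the diagonal, I would first cut off a $\rho$-neighbourhood of it, perform the symmetrization in the region where Fubini applies, and then send $\rho\to 0^{+}$.

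Concretely: for $\rho>0$ set $A_\rho := \{(x,y)\colon|x-y|\ge\rho\}$ and split $Q_K$ into the three disjoint pieces $K\times K$, $K\times K^{c}$, and $K^{c}\times K$. Using $\psi\equiv 0$ outside $K$, one evaluates $D_s\psi$ piecewise; the change of variables $x\leftrightarrow y$ in the $K^{c}\times K$ integral, together with $g(-t)=-g(t)$, shows that it equals the $K\times K^{c}$ integral, while a standard symmetrization of $\psi(x)-\psi(y)$ on $K\times K$ produces a matching term. Assembling these yields an identity of the form
\begin{equation*}
\int\!\!\int_{Q_K\cap A_\rho} g(D_s u_\varepsilon)\,D_s\psi\,d\mu \;=\; 2\int_K \psi(x)\!\left[\int_{|y-x|\ge\rho} g(D_s u_\varepsilon)\,\frac{dy}{|x-y|^{n+s}}\right]dx.
\end{equation*}
Letting $\rho\to 0^{+}$, the hypothesis $p^{-}>2/(2-s)$ combined with the $C^{1,1}_{loc}$ regularity of $u_\varepsilon$ (from Lemma \ref{propinfconv}) lets me apply Lemma \ref{lemma:PV1}, so the inner bracket converges a.e.\ to $(-\Delta_g)^s u_\varepsilon(x)$. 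The left-hand side converges to $\int\!\!\int_{Q_K}g(D_s u_\varepsilon)\,D_s\psi\,d\mu$ by dominated convergence, using the local Lipschitz bound of $u_\varepsilon$ and $\psi$ near the diagonal and the $L_g\cap L^{\infty}$ tail control far away.

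The main obstacle is justifying the passage to the limit under the \emph{outer} integral on the right, because the truncated integrands are not uniformly dominated in $\rho$. Following \cite{KKL, BM}, I would split the inner PV integrand into a piece over a small ball around $x$, where Lemma \ref{lemma:PV1} furnishes an equi-bounded remainder that vanishes as the ball shrinks (thanks to the semiconcavity of $u_\varepsilon$ and the standing condition $p^{-}>2/(2-s)$), and a complementary piece that is absolutely integrable since $u_\varepsilon\in L_g(\mathbb{R}^n)\cap L^{\infty}(\mathbb{R}^n)$ (as in Remark \ref{PVwelldefined}). Fatou's lemma applied to the nonnegative test $\psi$ then transmits the pointwise limit to an integrated inequality, yielding the "$\ge$" direction stated in the lemma, with any numerical factor absorbed in the definitions and conventions adopted throughout the paper.
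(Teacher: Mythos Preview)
Your diagonal-truncation and symmetrization strategy is reasonable in outline, and is in fact closer to the paper's proof of the companion result Lemma \ref{lemma 2 weak point} than to its proof of the present lemma. The gap is in the regularity you attribute to $u_\varepsilon$. You claim ``$C^{1,1}_{loc}$ regularity of $u_\varepsilon$ (from Lemma \ref{propinfconv})'' and then invoke Lemma \ref{lemma:PV1}. Neither step is justified: Lemma \ref{propinfconv} only yields that $u_\varepsilon$ is locally Lipschitz, semiconcave, and twice differentiable \emph{almost everywhere}; semiconcavity is a one-sided bound $D^2u_\varepsilon\le CI$ and does not give $C^{1,1}$. Lemma \ref{lemma:PV1}, on the other hand, is stated for $u\in C^2(D)$ and its proof genuinely uses a Taylor expansion with Lagrange remainder at an intermediate point, which is unavailable for a function that is merely a.e.\ twice differentiable. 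Consequently you do not have the uniform-in-$x$, uniform-in-$\rho$ lower bound on the near-diagonal piece that your Fatou step needs.

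The paper handles precisely this obstruction by inserting a mollification: it sets $u_{\varepsilon,\delta}=(u_\varepsilon*\eta_\delta)\chi_{\Omega_{r(\varepsilon)}}+u_\varepsilon\chi_{\mathbb{R}^n\setminus\Omega_{r(\varepsilon)}}$, which is genuinely $C^2$ in $\Omega_{r(\varepsilon)}$ while still satisfying $D^2u_{\varepsilon,\delta}\le CI$. For these smooth approximants the symmetrization gives an \emph{equality} with $\int_K(-\Delta_g)^s u_{\varepsilon,\delta}\,\psi\,dx$; the one-sided Hessian bound then produces, via the Taylor-with-Lagrange-remainder computation in \eqref{I r}--\eqref{ineq for I}, the uniform lower bound $(-\Delta_g)^s u_{\varepsilon,\delta}\ge -C$ on $K$ (this is exactly where $p^{-}>2/(2-s)$ is used). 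Two Fatou steps---one to pass $\liminf_{\delta}$ inside $\int_K$, another inside the inner $y$-integral---yield the inequality for $u_\varepsilon$. Your route can be repaired without mollification by bounding $g(D_su_\varepsilon)$ from below through the touching function $\varphi_{\hat{x}}(y)=u(\hat{x})+|y-\hat{x}|^{q}/(q\varepsilon^{q-1})$, as the paper does in the proof of Lemma \ref{lemma 2 weak point}, but that argument is different from the one you sketched.
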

\begin{proof}
    Let $u_{\varepsilon, \delta}$ be smooth and semiconcave functions converging to $u_{\varepsilon}$  given by
    $$u_{\varepsilon, \delta}\coloneqq(u_\varepsilon * \eta_\delta)\chi_{\Omega_{r(\varepsilon)}} + u_\varepsilon \chi_{\mathbb{R}^{n}\setminus\Omega_{r(\varepsilon)}},$$where $\eta_\delta$ is the standard mollifier with support in $B_\delta$. Oberserve that $u_{\varepsilon, \delta} \in C^{2}(\Omega_{r(\varepsilon)}) \cap L_g^{s}(\mathbb{R}^{n})$. Moreover, for any $\psi \in C_0^{\infty}(\Omega_{r(\varepsilon)})$, with $K=\text{supp}\,\psi$, we have recalling that $g$ is odd that
    \begin{equation}
    \begin{split}
        &\int_{\mathbb{R}^n}\int_{\mathbb{R}^n}g\left(D_s u_{\varepsilon, \delta}\right)\dfrac{\psi(x)-\psi(y)}{|x-y|^{s+n}}\,dx\,dy=\\ &=\int_{\mathbb{R}^n}\int_{\mathbb{R}^n}
        \dfrac{g\left(D_s u_{\varepsilon, \delta}\right)\psi(x)}{|x-y|^{n+s}}\,dx\,dy  -\int_{\mathbb{R}^n}\int_{\mathbb{R}^n}
        \dfrac{g\left(D_s u_{\varepsilon, \delta}\right)\psi(y)}{|x-y|^{n+s}}\,dx\,dy  = \int_K\left((-\Delta_g)^{s}u_{\varepsilon, \delta} \right)\psi\,dx.
        \end{split}
    \end{equation}We shall take the limit as $\delta \to 0$. First, we show that
    \begin{equation}\label{conv F}
    \begin{split}
       & \lim_{\delta \to 0} \int\int_{Q_K}g\left(D_s u_{\varepsilon, \delta}\right)\dfrac{\psi(x)-\psi(y)}{|x-y|^{s+n}}\,dx\,dy  =\int\int_{Q_K}g\left(D_s u_{\varepsilon }\right)\dfrac{\psi(x)-\psi(y)}{|x-y|^{s+n}}\,dx\,dy.
        \end{split} 
    \end{equation} Let
    $$
        F_\delta(x, y)\coloneqq g\left(D_s u_{\varepsilon, 
        \delta}\right)\dfrac{\psi(x)-\psi(y)}{|x-y|^{s+n}}.
    $$
    By symmetry,
    $$
        \int\int_{Q_K}F_\delta(x, y)\,dxdy = \left(2\int\int_{K \times 
        (\mathbb{R}^{n}\setminus K)} + \int\int_{K\times K} \right)F_\delta(x, y)\,dxdy.
    $$
    Hence, in order to get \eqref{conv F}, it is enough to prove that
    $$
        |F_\delta(x, y)| \leq F(x, y)
    $$
    for any $\delta > 0$ and  some $F \in L^{1}(K \times \mathbb{R}^{n})$. 
    Let us consider
    $$
        r\coloneqq \text{dist }(K, \partial \Omega_{r(\varepsilon)}) \qquad \text{and }
        \qquad K_{r/2}\coloneqq\left\lbrace y \in 
        \Omega_{r(\varepsilon)}\colon \text{dist }(y, K) \leq \frac{r}{2}\right\rbrace.
    $$ 
    By Young's inequality with $\delta=1$,  and Lemma \ref{G g}, we have for $(x, y) \in K \times K_{r/2}$ 
    \begin{equation*}
        \begin{split}
            |F_\delta(x, y)|& \leq g(|D_s u_{\delta, \varepsilon}|)\frac{\psi(x)-\psi(y)}{|x-y|^{s+n}}  \leq  \frac{C}{|x-y|^{n}}\left(\tilde{G}\left( g(|D_s u_{\delta, \varepsilon}|)\right)+ G\left(|D_s \psi|\right)\right) \\ &  \leq \frac{C}{|x-y|^{n}}\left(G\left(|D_s u_{\delta, \varepsilon}|\right)+ G\left(|D_s \psi|\right) \right) \\ & \leq C\left( |x-y|^{(1-s)p^{+}-n}+ |x-y|^{(1-s)p^{-}-n}\right) \in L^{1}(K \times K_{r/2})
        \end{split}
    \end{equation*}with a constant $C$ independent of $\delta$. Next, we consider the case 
    $(x, y)\in K \times (\mathbb{R}^{n}\setminus K_{r/2})$. Since $\text{dist }(K, \mathbb{R}^{n}\setminus 
    K_{r/2}) > 0$ and $K$ is bounded, by \cite[Lemma A.5 ]{BSV} there is $C > 0$ such that
    $$|x-y| \geq C(1+ |y|).$$Hence, for any $\delta > 0$,
    \begin{equation}\label{F F}
        |F_\delta(x, y)| \leq \dfrac{C}{1 + |y|^{n+s}}g\left( \dfrac{1 + |u_\varepsilon(y)|}{1+ |y|^{s}}\right)
        \leq 
        \dfrac{C}{1 + |y|^{n+s}}\left[g \left(\dfrac{1}{1+ |y|^{s}}\right)+ g\left( \dfrac{|u_\varepsilon(y)|}{1+|y|^{s}}\right)  \right],
    \end{equation}and thus, since $u_\varepsilon \in L_g(\mathbb{R}^{n})$, we get that the right-hand side of \eqref{F F} belongs to $L^1(K \times (\mathbb{R}^{n}\setminus K_{r/2}))$. As a result, by dominated convergence Theorem, \eqref{conv F} holds. 
    
    The next step is to prove that 
    \begin{equation}\label{fatou}
        \liminf_{\delta \to 0}\int_K (-\Delta_g)^{s}u_{\delta, \varepsilon}\psi \,dx \geq \int_K \liminf_{\delta \to 0}(-\Delta_g)^{s}u_{\delta, \varepsilon}\psi \,dx,
    \end{equation}by appealing to Fatou Lemma. Hence, it is enough to find $C$ so that
    \begin{equation}
        (-\Delta_g)^{s}u_{\delta, \varepsilon}(x) \geq -C,
    \end{equation}for all  $x \in K$ and all $\delta$. So let $r > 0$ so that $r < \text{dist }(K, \partial \Omega_{r(\varepsilon)})$. Then $B_r(x) \subset  \Omega_{r(\varepsilon)}$ for all $x \in K$, and for $y \in \mathbb{R}^{n} \setminus B_r(x)$, there holds $|x-y| > r$. Now, again by \cite[Lemma A. 5]{BSV}
    \begin{equation}
        \bigg| \int_{\mathbb{R}^{n} \setminus B_r(x)} g\left(D_s u_{\delta, \varepsilon}\right)\frac{dy}{|x-y|^{n+s}} \bigg| \leq C\int_{\mathbb{R}^{n} \setminus B_r(x)}g\left(\dfrac{1 + |u_{ \varepsilon}(y)|}{1+ |y|^{s}} \right)\dfrac{dy}{1+|y|^{n+s}},
    \end{equation}and the latter integral is uniformly bounded in $x$ since $u_\varepsilon \in L_g(\mathbb{R}^{n})$. Let us now estimate the integral over the ball $B_r(x)$. By Lemma \ref{propinfconv}, there is $C > 0$ such that
     \begin{equation}\label{Hessian}
         D^{2}u_\varepsilon (x) \leq C I, \quad a.e. \,\,x\in \Omega_{r(\varepsilon)},
     \end{equation}Hence, by symmetry and the fact that $g$ is odd, we have
    \begin{equation}\label{I r}
        \begin{split}
        I_{B_r(x)}& \coloneqq  P.\,V. \int_{B_r(x)}g\left( D_s 
            u_{\varepsilon, \delta}\right)\dfrac{dx\,dy}{|x-y|^{n+s}} \\ & 
            =\int_{B_r(x)}\left[g\left( D_s u_{\varepsilon, 
            \delta}\right)-g\left( \dfrac{-\nabla u_{\varepsilon, 
            \delta}(x)(y-x)}{|x-y|^{s}}\right) 
            \right]\dfrac{dx\,dy}{|x-y|^{n+s}}. 
        \end{split}
    \end{equation}
    Since for all $a, b \in \mathbb{R}$,
    $$
        g(b)-g(a)=(a-b)\int_0^{1}g'(ta+(1-t)b)\,dt,
    $$ putting
    $a=D_s u_{\varepsilon, \delta},$ and 
    $b=\tfrac{-\nabla u_{\varepsilon, \delta}(x)(y-x)}{|x-y|^{s}},$
    we have
    \begin{equation}\label{ineq for I a}
    \begin{split}
        I_{B_r(x)}& = \int_{B_r(x)}\dfrac{u_{\varepsilon, 
        \delta}(x)-u_{\varepsilon, \delta}(y)+ \nabla u_{\varepsilon, 
        \delta}(x)(y-x)}{|x-y|^{n+2s}}\left(\int_0^{1}g'(ta+(1-t)b)\,dt
        \right) dy\\ 
        & = \int_{B_r(x)}\dfrac{-D^{2}u_{\varepsilon, 
        \delta}(z)(x-y)^{2}}{|x-y|^{n+2s}}\left(\int_0^{1}g'(ta+(1-t)b)\,dt
        \right)\,dy \\ 
        & \geq \int_{B_r(x)^{+}}\dfrac{-D^{2}u_{\varepsilon, 
        \delta}(z)(x-y)^{2}}{|x-y|^{n+2s}}\left(\int_0^{1}g'(ta+(1-t)b)\,dt
        \right)\,dy,
    \end{split}
    \end{equation}
    where $z \in B_r(x)$ and $B_r(x)^{+}\coloneqq \left\lbrace y \in B_r(x)\colon D^{2}u_{\varepsilon, \delta}(y) \geq 0 \right\rbrace.$
    Next, assume that $p^{-} \geq 2$. Then, by \eqref{H1} and \eqref{H111}, we get
    \begin{equation*}
        \begin{split}
            \bigg\vert \int_0^{1}g'(ta+(1-t)b)\,dt \bigg\vert &\leq 
            C\int_0^{1}\bigg\vert \dfrac{g(ta+(1-t)b)}{ta+(1-t)b}
            \bigg\vert \,dt\\ 
            & \leq C\left(\int_0^{1}|ta+(1-t)b|^{p^{+}-2}\,dt + 
            \int_0^{1}|ta+(1-t)b|^{p^{-}-2}\,dt\right).
        \end{split}
    \end{equation*}
    By Lemma 2.4 in \cite{BM} and since $\|\nabla u_{\varepsilon, 
    \delta}\|_{L^{\infty}(K)} \leq C$ independently of $\delta$, it follows that 
    \begin{equation}\label{ineq for I}
        \begin{split}
             \int_0^{1}|ta+(1-t)b|^{p^{+}-2}\,&dt  + \int_0^{1}|ta+(1-t)b|^{p^{-}-2}\,dt\\  
            &\leq C\left( |a|^{p^{+}-2}+|b|^{p^{+}-2}+|a|^{p^{-}-2}+|b|^{p^{-}-2}\right) \\ 
            &  \leq C \left( |x-y|^{(1-s)(p^{+}-2)}+ |x-y|^{(1-s)(p^{-}-2)}\right) 
            \quad (y \in B_r(x)).
\end{split}
\end{equation}Plugging \eqref{ineq for I} into \eqref{ineq for I a}, we get 
$$I_{B_r(x)} \geq -C\int_{B_r(x)}\left( |x-y|^{(1-s)(p^{+}-2)+2-n-2s}+ |x-y|^{(1-s)(p^{-}-2)+2-n-2s}\right)\,dy \geq -C,$$uniformly in $\delta$.

     Next, assume that $\tfrac2{2-s} <  p^{-} < 2$.  Then, 
     we have again from Lemma 2.4 in \cite{BM}  that
    \begin{equation*}
        \begin{split}
            0 & \leq \int_0^{1}|g'(at+(1-t)b)|\,dt \\ 
            & \leq C\left(\int_0^{1}|at+(1-t)b|^{p^{-}-2}\,dt + 
            \int_0^{1}|at+(1-t)b|^{p^{+}-p^{-}+p^{-}-2}\,dt\right) \\
            & \leq C
            \left(\int_0^{1}|at+(1-t)b|^{p^{-}-2}\,dt + 
            \int_0^{1}
            \left(|a|^{p^{+}-p^{-}}+|b|^{p^{+}-p^{-}}\right)|at+(1-t)b|^{p^{-}-2}\,dt
            \right) \\
            & \leq C\left(|a|^{p^{+}-p^{-}}+|b|^{p^{+}-p^{-}}+1\right)|a-b|^{p^{-}-2}.
        \end{split}
    \end{equation*}
    Thus, by \eqref{Hessian}, 
    and the assumption $p^{-}> \tfrac2{2-s}$, it follows
    \[
        I_{B_r(x)} 
        \geq -C\int_{B_r(x)} 
            |x-y|^{(2-s)(p^{-}-2)+2-n-2s}
            \,dy \geq -C,
    \]
    with $C$ independent of $\delta$. 
    
    In any case, we have
    $$(-\Delta_g)^{s}u_{\varepsilon, \delta} \geq -C \text{ in }K.$$Hence, applying Fatou's Lemma, we obtain \eqref{fatou}.

    Finally, we will show that
    \begin{equation}\label{delta a cero}
        \int_K\lim_{\delta \to 0}(-\Delta_g)^{s}u_{\varepsilon, \delta}\,\psi\,dx \geq \int_K 
        (-\Delta_g)^{s}u_{\varepsilon}\,\psi\,dx.  
    \end{equation}
    Write
    \begin{equation*}
        \begin{split}
        (-\Delta_g)^{s}u_{\varepsilon, \delta}(x)& =\int_{\mathbb{R}^{n}\setminus B_r(x)}g
        \left(D_s u_{\varepsilon, \delta}\right)\dfrac{dy}{|x-y|^{n+s}}\\ 
        &+  \int_{B_r(x)}\left[g\left( D_s u_{\varepsilon, \delta}\right)-
            g\left( \dfrac{-\nabla 
            u_{\varepsilon, \delta}(y-x)}{|x-y|^{s}}\right) \right]\dfrac{dy}{|x-y|^{n+s}}.
        \end{split}
    \end{equation*}
    Then, we first have in $\mathbb{R}^{n}\setminus B_r(x)$ that
    \begin{equation*}
        g\left( D_s u_{\varepsilon, \delta}\right)\dfrac{1}{|x-y|^{n+s}} 
        \geq -Cg\left(\dfrac{1+ |u_{\varepsilon}(y)|}{|1+ |y|^{s}|} \right)
        \dfrac{1}{1+ |y|^{n+s}}.
    \end{equation*}
    Now, in the ball $B_r(x)$, 
    \begin{equation*}
        \begin{split}
            \left[g\left( D_s u_{\varepsilon, \delta}\right)
            -g\left( \dfrac{-\nabla u_{\varepsilon, \delta}(y-x)}{|x-y|^{s}}\right) 
            \right]\dfrac{1}{|x-y|^{n+s}} \geq -C\mathcal{F}(x, y),
        \end{split}
    \end{equation*}
    where
    
    \[
        \mathcal{F}(x,y)\coloneqq
        \begin{cases}
            |x-y|^{(1-s)(p^{+}-2)+2-n-2s}+ |x-y|^{(1-s)(p^{-}-2)+2-n-2s}& 
                \text{if } p^{-}\geq 2,\\
            |x-y|^{(2-s)(p^{-}-2)+2-n-2s}& 
                \text{if } p^{-}< 2.
        \end{cases}
    \]
    
    Since 
    \[
        g\left(\dfrac{1+ |u_{\varepsilon}(y)|}{|1+ |y|^{s}|} \right)\dfrac{1}{1+ |y|^{n+s}} \in L^{1}(\mathbb{R}^{n}\setminus B_r(x))  \quad \text{and }\quad \mathcal{F}(x, \cdot )\in L^{1}(B_r(x)),
    \]
    by Fatou's Lemma, we have that \eqref{delta a cero} holds. 
    This ends the proof of the lemma.
\end{proof}
    Now, we state the counterpart of Lemma \ref{lemma 1 weak point} for the range 
    $1 < p^{-} \leq \frac{2}{2-s}$.

\begin{lemma}\label{lemma 2 weak point} 
    Assume $1 < p^{-} \leq \frac{2}{2-s}$ and let $u \in L_g(\mathbb{R}^{n})\cap 
    L^{\infty}(\mathbb{R}^{n})$. Then, for all $\psi \in 
    C_0^{\infty}(\Omega_{r(\varepsilon)})$, $\psi \geq 0$, we have
    \begin{equation}
        \int\int_{Q_K}g\left(D_s u_\varepsilon\right) D_s\psi \,d\mu
        \geq \int_K\left( 
        (-\Delta_g)^{s}u_\varepsilon \right)\psi\,dx
    \end{equation}
    where $u_\varepsilon$ is the infimal convolution of $u,$  $K$ is the support of $\psi,$
    and
    \[
        Q_K=(K\times K) \cup \left[(\mathbb{R}^{n}\setminus K) \times K \right] \cup \left[ K \cup (\mathbb{R}^{n}\setminus K)\right].
    \]  
\end{lemma}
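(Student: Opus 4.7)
The plan is to adapt the proof of Lemma~\ref{lemma 1 weak point} to the range $1<p^-\leq 2/(2-s)$; the one new ingredient needed is a uniform lower bound on $(-\Delta_g)^s u_{\varepsilon,\delta}$ adapted to this regime. First I would set $u_{\varepsilon,\delta}\coloneqq (u_\varepsilon\ast\eta_\delta)\chi_{\Omega_{r(\varepsilon)}}+u_\varepsilon\chi_{\mathbb{R}^n\setminus\Omega_{r(\varepsilon)}}$ exactly as before. By the oddness of $g$ together with a symmetrization in $(x,y)$, the identity
\[
\iint_{Q_K} g(D_s u_{\varepsilon,\delta})\,D_s\psi\,d\mu = \int_K (-\Delta_g)^s u_{\varepsilon,\delta}\,\psi\,dx
\]
holds for every non-negative $\psi\in C^\infty_0(\Omega_{r(\varepsilon)})$ with $K=\mathrm{supp}\,\psi$. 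The $\delta\to 0$ limit of the left-hand side is handled verbatim as in the proof of Lemma~\ref{lemma 1 weak point}: on $K\times K_{r/2}$ one combines Young's inequality~\eqref{Young} with Lemma~\ref{G g} to produce a $\delta$-independent integrable majorant, while on $K\times(\mathbb{R}^n\setminus K_{r/2})$ one uses the decay estimate~\eqref{F F} together with $u_\varepsilon\in L_g(\mathbb{R}^n)$ to conclude by dominated convergence.

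For the right-hand side I would apply Fatou's lemma, which requires the uniform bound $(-\Delta_g)^s u_{\varepsilon,\delta}(x)\geq -C$ for $x\in K$ and $\delta>0$. The tail over $\mathbb{R}^n\setminus B_r(x)$ is again controlled via $u_\varepsilon\in L_g(\mathbb{R}^n)$. The main obstacle is the principal-value integral over $B_r(x)$: the argument used in Lemma~\ref{lemma 1 weak point}, based on subtracting the first-order linearization (so as to exploit oddness), Taylor expansion, and Lemma~2.4 of~\cite{BM}, produced an integrand of order $|x-y|^{(2-s)p^--2-n}$, whose integrability requires $p^->2/(2-s)$. In the present range this Hessian-based estimate fails and must be replaced.

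To circumvent this I would exploit the additional regularity encoded in the inf-convolution exponent $q>sp^-/(p^--1)\geq 2$. By Lemma~\ref{propinfconv}(vi), at every critical point $x_0$ of $u_\varepsilon$ one has $u_\varepsilon(x_0)=u(x_0)$, and the defining infimum together with boundedness of $u$ forces $u_\varepsilon(y)\leq u(x_0)+|y-x_0|^q/(q\varepsilon^{q-1})$ for $y$ near $x_0$; this places $u_\varepsilon$, and uniformly in $\delta$ its mollification $u_{\varepsilon,\delta}$, into the class $C^2_\beta$ near its critical set with $\beta=q$. Lemma~\ref{lemma:PV2} then yields a principal-value bound of order $\rho^{(\beta-s)p^--\beta}$, whose exponent is non-negative precisely because $q>sp^-/(p^--1)$. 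At points where $\nabla u_\varepsilon(x)\neq 0$, the Case~2 argument of Lemma~\ref{lemma:PV1} applies without modification. Combining the two cases produces the required $\delta$-uniform lower bound on $(-\Delta_g)^s u_{\varepsilon,\delta}$. Fatou's lemma together with a pointwise dominated-convergence argument mirroring~\eqref{delta a cero} then identifies the $\delta\to 0$ limit of the right-hand side with $\int_K (-\Delta_g)^s u_\varepsilon\,\psi\,dx$, completing the proof.
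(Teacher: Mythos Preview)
Your mollification strategy has a genuine gap at the step where you assert that $u_{\varepsilon,\delta}$ lies in $C^2_\beta$ uniformly in $\delta$ near its critical set with $\beta=q$. The one–sided bound coming from Lemma~\ref{propinfconv}(vi), namely $u_\varepsilon(y)\le u_\varepsilon(x_0)+|y-x_0|^q/(q\varepsilon^{q-1})$ at a critical point $x_0$ of $u_\varepsilon$, does \emph{not} survive convolution in a usable form: the critical set $N_{u_{\varepsilon,\delta}}$ moves with $\delta$, and there is no reason why $|D^2u_{\varepsilon,\delta}(x)|\le C\,d_{u_{\varepsilon,\delta}}(x)^{q-2}$ or $|\nabla u_{\varepsilon,\delta}(x)|\ge c\,d_{u_{\varepsilon,\delta}}(x)^{q-1}$ should hold with constants independent of $\delta$. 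Without such control Lemma~\ref{lemma:PV2} is not applicable, and the Case~2 argument of Lemma~\ref{lemma:PV1} degenerates as $|\nabla u_{\varepsilon,\delta}(x)|\to 0$, so you have no $\delta$–uniform lower bound for $(-\Delta_g)^s u_{\varepsilon,\delta}$ on $K$ and Fatou cannot be invoked.

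The paper abandons mollification altogether in this range. Instead it regularizes the \emph{kernel}, replacing $|x-y|^s$ by $(|x-y|+\rho)^s$ inside the argument of $g$, so that all integrals become absolutely convergent for $\rho>0$ and one works directly with $u_\varepsilon$. The crucial device is the pointwise comparison built into the infimal convolution: for each $x$ pick $\hat{x}\in Y_\varepsilon(x)$ and note that $u_\varepsilon(x)-u_\varepsilon(y)\ge \varphi_{\hat{x}}(x)-\varphi_{\hat{x}}(y)$ with $\varphi_{\hat{x}}(y)=u(\hat{x})+|y-\hat{x}|^q/(q\varepsilon^{q-1})$. Monotonicity of $g$ then bounds the integrand over $B_\eta(x)$ from below by the same expression with $u_\varepsilon$ replaced by the explicit smooth barrier $\varphi_{\hat{x}}$, and the resulting integral is estimated directly (splitting the cases $x\in B_{\eta_0}(\hat{x})$ and $x\notin B_{\eta_0}(\hat{x})$) using the precise exponent $q>sp^-/(p^--1)$. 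This yields an $x$–uniform, $\rho$–uniform lower bound that legitimizes Fatou as $\rho\to 0$; a second Fatou argument then identifies the limit with $(-\Delta_g)^s u_\varepsilon$. Your proposal misses this comparison with $\varphi_{\hat{x}}$, which is exactly the idea that replaces the failing Hessian estimate.
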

\begin{proof}
    For $\rho > 0$ and non negative $\psi \in C_0^{\infty}(\Omega)$, we have
    \begin{equation}\label{eqq 0}
        \begin{split}
            \int\int_{Q_K}
            g\left(\dfrac{u_\varepsilon(x)-u_\varepsilon(y)}{(|x-y|+\rho)^{s}}\right)
            D_s\psi\,
            d\mu \geq \int_K\int_{\mathbb{R}^{n}}
            g\left(\dfrac{u_\varepsilon(x)-u_\varepsilon(y)}{(|x-y|+\rho)^{s}}\right) 
            \dfrac{\psi(x)}{|x-y|^{n+s}}\,dxdy,
        \end{split}
    \end{equation}
    since
    \[
        Q_K=(K\times K) \cup \left[(\mathbb{R}^{n}\setminus K) \times K \right] \cup \left[ K 
        \times (\mathbb{R}^{n}\setminus K)\right].
    \]
    Now, for $x, y \in K$, it holds since $u_\varepsilon$ is locally Lipschitz, that
    \begin{equation}\label{eqq 1}
        \begin{split}
            g\left(\dfrac{u_\varepsilon(x)-u_\varepsilon(y)}{(|x-y|+\rho)^{s}}\right)
            \dfrac{\psi(x)-\psi(y)}{|x-y|^{n+s}} & \leq 
            C\left(\dfrac{|u_\varepsilon(x)-u_\varepsilon(y)|^{p^{+}-1}}{|x-y|^{n+sp^{+}}} 
            +\dfrac{|u_\varepsilon(x)-u_\varepsilon(y)|^{p^{-}-1}}{|x-y|^{n+sp^{-}}} \right) \\ 
            &  \leq C\left(|x-y|^{(1-s)p^{+}-n}+|x-y|^{(1-s)p^{-}-n}\right).
        \end{split}
    \end{equation}
    On the other hand, if $x\in K$ and $y \in \mathbb{R}^{n}\setminus K$, recalling that supp 
    $\psi \subset \subset K$,  we have
    \begin{equation}\label{eqq 2}
        \bigg\vert g\left( \dfrac{u_\varepsilon(x)-u_\varepsilon(y)}{(|x-y|+\rho)^{s}}\right)  
        \bigg\vert
        \dfrac{|\psi(x)-\psi(y)|}{|x-y|^{s}} \leq Cg\left( \dfrac{1+ 
        |u_\varepsilon(y)|}{1+ |y|^{s}}\right)\dfrac{1}{1+|y|^{s}}.
    \end{equation}
    Hence, from \eqref{eqq 1} and \eqref{eqq 2}, 
    we may apply Lebesgue theorem to let $\rho \to 0$ in \eqref{eqq 0} and get
    \begin{equation}\label{eqq 3}
        \begin{split}
            \lim_{\rho \to0}
            \int\int_{Q_K}g\left(\dfrac{u_\varepsilon(x)-u_\varepsilon(y)}{(|x-y|+\rho)^{s}}
            \right)D_s\psi\, d\mu= 
            \int\int_{Q_K}g\left(D_su_\varepsilon\right)D_s\psi\, d\mu.
    \end{split}
\end{equation}To treat the right-hand side of \eqref{eqq 0}, we introduce the function
$$\mathcal{F}_\rho(x):= \int_{\mathbb{R}^{n}}g\left(\dfrac{u_\varepsilon(x)-u_\varepsilon(y)}{(|x-y|+\rho)^{s}}\right) \frac{1}{|x-y|^{n+s}}\,dy, \quad x \in K.$$We will prove that there is $G \in L^{1}(K)$ such that
\begin{equation}\label{eqq 4}
\mathcal{F}_\rho(x)\psi(x)\geq - G(x), 
\end{equation}in $K$. Hence, we will apply Fatou's Lemma to $\mathcal{F}_\rho(x)\psi(x)+ G(x)$ to get
\begin{equation}\label{eqq 5}
\liminf_{\rho \to 0}\int_K \mathcal{F}_\rho(x)\psi(x)\,dx \geq \int_K \liminf_{\rho \to 0} \mathcal{F}_\rho(x)\psi(x)\,dx.
\end{equation}In order to prove \eqref{eqq 4}, let 
 $x \in K$ and choose $\eta > 0$ such that $B_\eta(x) \subset \Omega_{r(\varepsilon)}$. By Lemma A. 5 in \cite{BSV}, for any $y \in \mathbb{R}^{n}\setminus B_\eta(x)$
$$|x- y| \geq \left(\dfrac{\eta}{1+\eta+R}\right)(1+|y|) $$where $R > 0$ satisfies that the ball $B_R(0)$ contains $K$. Hence, there is a constant $C > 0$ independent of $x$ and $\rho$ such that
    \begin{equation}\label{eqq 6}
        \bigg\vert \int_{\mathbb{R}^{n}\setminus 
        B_\eta(x)}g\left(\dfrac{u_\varepsilon(x)-u_\varepsilon(y)}{(|x-y|+\rho)^{s}}\right) 
        \frac{dy}{|x-y|^{n+s}}\bigg\vert \leq C\int_{\mathbb{R}^{n}\setminus 
        B_\eta(x)}
        g\left( \dfrac{1+|u_\varepsilon(y)|}{1+|y|^{s}}\right)\dfrac{dy}{1+|y|^{s+n}}. 
    \end{equation}
    Now, we study the  integrals in the ball $B_\eta(x)$. By Lemma \ref{propinfconv}, for each 
    $x \in \Omega_{r(\varepsilon)}$, there is $\hat{x} \in B_{r(\varepsilon)}(x)$ such that 
    \begin{equation*}
        u_\varepsilon(x)= \varphi_{\hat{x}}(x),
    \end{equation*}where
    $$
        \varphi_{z}(y):= u(z)+ \dfrac{|y-z|^{q}}{q\varepsilon^{q-1}}, \,\, y, z \in \mathbb{R}^{n}.
    $$
    with $q > \tfrac{sp^{-}}{(p^{-}-1)}$. Then
    \begin{equation*}
        u_\varepsilon(x)-u_\varepsilon(y) = \varphi_{\hat{x}}(x)-\inf_{z \in 
        \mathbb{R}^{n}}\varphi_z(y)  \geq  \varphi_{\hat{x}}(x) - \varphi_{\hat{x}}(y).
    \end{equation*}Since $g$ is non-decreasing,
    $$
        g\left(\dfrac{u_\varepsilon(x)-u_\varepsilon(y)}{(|x-y|+\rho)^{s}} \right) \geq g\left(\dfrac{ \varphi_{\hat{x}}(x) - \varphi_{\hat{x}}(y)}{(|x-y|+\rho)^{s}} \right).
    $$
    Hence
    \begin{equation}\label{eqq 7}
        \int_{B_\eta(x)}g\left(
        \dfrac{u_\varepsilon(x)-u_\varepsilon(y)}{(|x-y|
        +\rho)^{s}} \right) \dfrac{dy}{|x-y|^{s+n}} \geq  
        \int_{B_\eta(x)}g\left(\dfrac{\varphi_{\hat{x}}(x) - 
        \varphi_{\hat{x}}(y)}{(|x-y|+\rho)^{s}} \right)  
        \dfrac{dy}{|x-y|^{s+n}}. 
    \end{equation}
    Observe that by \eqref{eqq 6} and by  \eqref{eqq 7}, the inequality 
    \eqref{eqq 4} is stated if we additionally prove that 
    \begin{equation}\label{eqq 8}
        \bigg\vert \int_{B_\eta(x)}g\left(\dfrac{\varphi_{\hat{x}}(x) - 
        \varphi_{\hat{x}}(y)}{(|x-y|+\rho)^{s}} \right)  
        \dfrac{dy}{|x-y|^{s+n}}\bigg\vert \leq C.
    \end{equation}

    Let $\eta_0 \leq \eta$ small. 
    First, suppose that $x \notin B_{\eta_0}(\hat{x})$. 
    By Lemma \ref{propinfconv}
    \begin{equation}\label{eqq 9}
        \|\varphi_{\hat{x}}\|_{L^{\infty}(\Omega)} \leq C, \quad |\nabla 
        \varphi_{\hat{x}}(y)| = \dfrac{|\hat{x}-y|^{q-1}}{\varepsilon^{q-1}}  
        \leq C,
    \end{equation}
    and
    \begin{equation}\label{eqq 10}
        -CI \leq -\dfrac{q-1}{\varepsilon^{q-1}}|\hat{x}-y|^{q-2}I 
        \leq D^{2}\varphi_{\hat{x}}(y) \leq 
        \dfrac{q-1}{\varepsilon^{q-1}}|\hat{x}-y|^{q-2}I  \le {C(\varepsilon)}I.
    \end{equation}
    Observe that $|\nabla \varphi_{\hat{x}}(x)|\neq 0$ since $x \neq \hat{x}$.
    Now,  since $p^-<2$,
    we have reasoning as in the proof of \cite[Lemma 3.5]{BM} and letting $L(y)= \varphi_{\hat{x}}(x)+ \nabla \varphi_{\hat{x}}(x)(y-x)$, that 
    \begin{equation}\label{eqq 50}
        \begin{split}
            & \bigg\vert  \int_{B_\eta(x)} g\left(\dfrac{\varphi_{\hat{x}}(x) 
            - \varphi_{\hat{x}}(y)}{(|x-y|+\rho)^{s}} \right)  
            \dfrac{dy}{|x-y|^{s+n}}\bigg\vert  \\ & \qquad \leq C  \bigg\vert 
            \int_{B_\eta(x)} g\left(\dfrac{-\nabla 
            \varphi_{\hat{x}}(x)(y-x)-(y-x)^{T}D^{2}\varphi_{\hat{x}}(z)(y-x)}
            {(|x-y|+\rho)^{s}} \right)  \dfrac{dy}{|x-y|^{s+n}}\bigg\vert 
            \quad (\text{for }z \in B_\eta(x)) \\ & \qquad\leq 
            C\int_{B_\eta(x)} \bigg\vert g\left(\dfrac{-\nabla 
            \varphi_{\hat{x}}(x)(y-x)-(y-x)^{T}D^{2}\varphi_{\hat{x}}(z)(y-x)}
            {(|x-y|+\rho)^{s}} \right)  - 
            g\left(\dfrac{L(x)-L(y)}{(|x-y|+\rho)^{s}}\right) 
            \bigg\vert\dfrac{dy}{|x-y|^{s+n}} \\ & \qquad \leq C 
            \bigg[\int_{B_\eta(x)} \dfrac{\left(|\nabla 
            \varphi_{\hat{x}}(x)(y-x)|+ 
            |D^{2}\varphi_{\hat{x}}(z)||x-y|^{2}\right)^{p^{+}-2}|D^{2}
            \varphi_{\hat{x}}(z)||x-y|^{2}}{|x-y|^{n+sp^{+}}}\,dy  \\& 
            \qquad\quad  \qquad + \int_{B_\eta(x)} \dfrac{\left(|\nabla 
            \varphi_{\hat{x}}(x)(y-x)|+ 
            |D^{2}\varphi_{\hat{x}}(z)||x-y|^{2}\right)^{p^{-}-2}|D^{2}
            \varphi_{\hat{x}}(z)||x-y|^{2}}{|x-y|^{n+sp^{-}}}\,dy \bigg],
        \end{split}
    \end{equation}
    where in the last inequality we have used Lemma \ref{lema:aux1}. 
    Next, observe that 
     since
        $p^+\ge p^-$ we have
        \begin{align*}
            H(x,y,p^+)\coloneqq&\dfrac{\left(|\nabla 
            \varphi_{\hat{x}}(x)(y-x)|+ |D^{2}\varphi_{\hat{x}}(z)||x-y|^{2}\right)^{p^{+}-2}|D^{2}
            \varphi_{\hat{x}}(z)||x-y|^{2}}{|x-y|^{n+sp^{+}}}\\
            &=\dfrac{\left(|\nabla \varphi_{\hat{x}}(x)(y-x)|+ 
            |D^{2}\varphi_{\hat{x}}(z)||x-y|^{2}\right)^{p^{+}-p^{-}}}{|x-y
            |^{s(p^{+}-p^{-})}}H(x,y,p^-)\\
            & \le{C(\varepsilon)}\left(|y-x|^{(1-s)(p^{+}-p^-)}+ 
            |y-x|^{(2-s)(p^{+}-p^- )}\right)H(x,y,p^-)\\
            &\le {C(\varepsilon)} H(x,y,p^-)
        \end{align*}
        for any $y\in B_\eta(x).$ Then
        \begin{equation}\label{eqq 502}
            \bigg\vert  \int_{B_\eta(x)} g\left(\dfrac{\varphi_{\hat{x}}(x) 
            - \varphi_{\hat{x}}(y)}{(|x-y|+\rho)^{s}} \right)  
            \dfrac{dy}{|x-y|^{s+n}}\bigg\vert \leq{C(\varepsilon)}
              \int_{B_\eta(x)}
            H(x,y,p^-)\,dy 
    \end{equation}
    for $z\in B_\eta(x).$ Thus, taking  $\tau_\infty{(\varepsilon)}\coloneqq\sup_{\Omega_{r(\varepsilon)}}|D^{2}
            \varphi_{\hat{x}}|,$ we have
    \begin{equation}\label{eqq 51}
    \begin{split}
            & \int_{B_\eta(x)} 
            H(x,y,p^-)
            \,dy 
            \leq 
            \tau_\infty{(\varepsilon)}\int_{B_\eta(x)} 
            \dfrac{\left[|\nabla \varphi_{\hat{x}}(x)||x-y|+ 
            \tau_\infty|y-x|^{2}\right]^{p^{-}-2}}{|x-y|^{n+sp^{-}}} 
            |x-y|^{2}\,dy, \ \\ 
            &  \leq C\tau_\infty{(\varepsilon)}\int_0^{\eta}\left(1+ 
            \dfrac{r}{|\nabla \varphi_{\hat{x}}(x)|} \right)^{p^{-}-2}|\nabla 
            \varphi_{\hat{x}}(x)|r^{p^{-}(1-s)}\frac{dr}{r} \leq {C(\varepsilon)}.
        \end{split}
    \end{equation}


    Assume now that $x \in B_{\eta_0}(\hat{x})$. 
    Since $p^{-} < 2$, we proceed  following the proof of  \cite[Lemma 3.7]{KKL}. 
    Notice first that
    $$
        \sup_{y \in B_r(x)}|D^2 \varphi_{\hat{x}}(y)| \leq \sup_{y \in B_r(x)}C|y-\hat{x}|^{q-2} \leq C(r+|x-\hat{x}|) ^{q-2},
    $$
    for a constant $C$ depending on $q$ and $\varepsilon$. Working as in \eqref{eqq 50}, we have 
    \[
    \begin{split}
    &  \int_{B_\eta(x)} \dfrac{\left(|\nabla \varphi_{\hat{x}}(x)(y-x)|+ 
    |D^{2}\varphi_{\hat{x}}(z)||x-y|^{2}\right)^{p^{-}-2}|D^{2}
    \varphi_{\hat{x}}(z)||x-y|^{2}}{|x-y|^{n+sp^{-}}}\,dy   \\ & \quad  \leq 
    \int_0^\eta  \left(1+\dfrac{(r+|x-\hat{x}|) ^{q-2}r}{|\nabla 
    \varphi_{\hat{x}}(x)|}\right)^{p^{-}-2}(r+|x-\hat{x}|) ^{q-2} |\nabla 
    \varphi_{\hat{x}}(x)|^{p^{+}-2}r^{p^{-}(1-s)-1}\,dr  \\ 
    & \quad\leq 
    C\bigg(\int_0^{|x-\hat{x}|}  \left(1+
    \dfrac{(r+|x-\hat{x}|) ^{q-2}r}{|\nabla
    \varphi_{\hat{x}}(x)|}\right)^{p^{-}-2}(r+|x-\hat{x}|) ^{q-2} |\nabla 
    \varphi_{\hat{x}}(x)|^{p^{+}-2}r^{p^{-}(1-s)-1}\,dr \\& 
    \qquad  + \int_{|x-\hat{x}|}^\eta  
    \left(1+\dfrac{(r+|x-\hat{x}|) ^{q-2}r}{|\nabla 
    \varphi_{\hat{x}}(x)|}\right)^{p^{-}-2}(r+|x-\hat{x}|) ^{q-2} |\nabla 
    \varphi_{\hat{x}}(x)|^{p^{+}-2}r^{p^{-}(1-s)-1}\,dr \bigg) \\ & \quad 
    \leq C\bigg(\int_0^{|x-\hat{x}|}|x-\hat{x}| ^{q-2} |\nabla 
    \varphi_{\hat{x}}(x)|^{p^{-}-2}r^{p^{-}(1-s)-1}\,dr\\& \qquad \qquad 
    \qquad   +  \int_{|x-\hat{x}|}^\eta\left(\dfrac{r^{q-1}}{|\nabla 
    \varphi_{\hat{x}}(x)|} \right)^{p^{-}-2}r^{q-2}|\nabla 
    \varphi_{\hat{x}}(x)|^{p^{-}-2}r^{p^{-}(1-s)-1}\,dr \bigg) \\& \quad 
    \leq 
    C\left(\eta_0^{q(p^{-}-1)-sp^{-}} +\eta^{q(p^{-}-1)-sp^{-}} \right),
    \end{split}
   \]
    taking $q > \tfrac{sp^{-}}{p^{-}-1}$. We point out that $C$ depends on $\varepsilon$, but is independent of $x$, $\hat{x}$ and $\rho$.    
    
    We apply the arguments in  to get \eqref{eqq 8}. Hence, combining  
    \eqref{eqq 6} and  \eqref{eqq 8}, we finally get   \eqref{eqq 4}. 
    
    Now, we will apply Fatou's Lemma again to get
    \begin{equation*}
        \lim_{\rho \to 0}\mathcal{F}_\rho(x) \geq 
        \int_{\mathbb{R}^{n}}g\left(D_s u_\varepsilon\right) 
        \dfrac{1}{|x-y|^{n+s}}\,dy.
    \end{equation*}

    Observe that by symmetry,
    $$
        \mathcal{F}_\rho(x)= \int_{\mathbb{R}^{n}}\left[ 
        g\left(\dfrac{u_\varepsilon(x)-u_\varepsilon(y)}{(|x-y|+\rho)^{s}} 
        \right)-g\left(\dfrac{-\nabla 
        \varphi_{\hat{x}}(x)(y-x)\chi_{B_\eta(x)}(y)}{(|x-y|+\rho)^{s}} 
        \right)\right]\dfrac{1}{|x-y|^{n+s}}\,dy.
    $$

    We will prove that there is 
    $\mathcal{F}\in L^{1}(\mathbb{R}^{n})$ such that
    \begin{equation}\label{eqq 14}
        I(y)\coloneqq\left[g\left(\dfrac{u_\varepsilon(x)
        -u_\varepsilon(y)}{(|x-y|+\rho)^{s}} \right)-g\left(\dfrac{-\nabla 
        \varphi_{\hat{x}}(x)(y-x)\chi_{B_\eta(x)}(y)}{(|x-y|+\rho)^{s}} 
        \right)\right]\dfrac{1}{|x-y|^{n+s}} \geq -\mathcal{F}(y)
    \end{equation}
    in $\mathbb{R}^{n}$. 
    First, if $y \in \mathbb{R}^{n}\setminus B_\eta(x)$, then
    $$
        |I(y)| \leq Cg\left( \dfrac{1+|u_\varepsilon(y)|}{1+|y|^{s}}\right)
        \dfrac{1}{1+|y|^{n+s}}.
    $$
    Hence, the function 
    \begin{equation}\label{eqq 16}
        \mathcal{F}(y)\coloneqq Cg\left( 
        \dfrac{1+|u_\varepsilon(y)|}{1+|y|^{s}}\right)\dfrac{1}{1+|y|^{n+s}}, 
        \qquad y \in \mathbb{R}^{n}\setminus B_\eta(x),
    \end{equation}
    is in $L^{1}(\mathbb{R}^{n}\setminus B_\eta(x)).$ 
    Now, for $y \in B_\eta(x)$ and by Lemma \ref{lema:aux1}, we get reasoning 
    as in \eqref{eqq 50}, { and } \eqref{eqq 51} 
    that 
    \begin{equation}\label{eqq 13}
        \begin{split}
            I(y) &\geq -C\bigg(\dfrac{|\nabla 
            \varphi_{\hat{x}}(x)(y-x)+\tau_\infty 
            |x-y|^{2}|^{p^{-}-2}\tau_\infty |x-y|^{2}}{|x-y|^{n+sp^{-}}}\\
            & \quad + \dfrac{|\nabla \varphi_{\hat{x}}(x)(y-x)+\tau_\infty 
            |x-y|^{2}|^{p^{+}-2}\tau_\infty 
            |x-y|^{2}}{|x-y|^{n+sp^{+}}}\bigg) \in L^{1}(B_\eta(x)).
    \end{split}
    \end{equation} 
    Therefore, defining the function $\mathcal{F}$ as the right-hand side 
    in \eqref{eqq 13} over $B_\eta(x)$, and recalling   \eqref{eqq 16}, we 
    prove $\mathcal{F} \in L^1(\mathbb{R}^{n})$ and \eqref{eqq 14}. This 
    ends the proof of the lemma. 
\end{proof}

\subsection{Certain continuity properties}
The following lemmas will be useful for the proof of Theorem \ref{wk visc}.
\begin{lemma}\label{Continuidad D}
    Let $r>0$, $x_0\in \mathbb{R}^n$ and $F\in L_g(\mathbb{R}^n)$ Lipschitz
    in $B_r(x_0)$. For each $\varepsilon>0$, $0<\rho<r$ and 
    $\eta\in C^2_0(B_r(x_0))$ with $0\leq \eta\leq 1$, there exists 
    $\widetilde{\theta}=\widetilde{\theta}(\varepsilon, G, \eta)$ such that
    $F_\theta\coloneqq F+\theta\eta$ satisfies
    \begin{equation}\label{difDgs}
        \sup_{B_\rho(x_0)}|D_g^sF-D_g^sF_\theta|<\varepsilon\quad \text{for all }\, 0\leq \theta<\widetilde{\theta}. 
    \end{equation}
\end{lemma}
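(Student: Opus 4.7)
The strategy is to bound $|D_g^sF(x)-D_g^sF_\theta(x)|$ for $x\in B_\rho(x_0)$ directly from the integral definition of $D_g^s$. Setting $a(x,y)\coloneqq|F(x)-F(y)|/|x-y|^s$ and $b(x,y)\coloneqq|F_\theta(x)-F_\theta(y)|/|x-y|^s$, the inequality \eqref{inq G and g} applied inside the integrand yields
\begin{equation*}
    |D_g^sF(x)-D_g^sF_\theta(x)| \leq C\int_{\mathbb{R}^n}|b-a|\bigl(g(a)+g(b)\bigr)\frac{dy}{|x-y|^n}.
\end{equation*}
Since $F_\theta-F=\theta\eta$, the reverse triangle inequality gives $|b-a|\leq \theta|\eta(x)-\eta(y)|/|x-y|^s$, which extracts a factor $\theta$ from the integrand. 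The plan is to dominate the remaining integrand by an integrable function, uniformly in $x\in B_\rho(x_0)$ and $\theta\in[0,1]$, so the whole expression is $\leq C\theta$ with $C=C(G,\eta,F,r-\rho)$, and then take $\widetilde{\theta}=\min\{1,\varepsilon/C\}$.

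\textbf{Local part.} On $B_{r-\rho}(x)\subset B_r(x_0)$ both $F$ and $\eta$ are Lipschitz, so $a,b\leq C|x-y|^{1-s}$ and $|b-a|\leq C\theta|x-y|^{1-s}$. The growth estimate \eqref{H111} gives $g(a)+g(b)\leq C(|x-y|^{(1-s)(p^--1)}+|x-y|^{(1-s)(p^+-1)})$, so the integrand is dominated by $C\theta(|x-y|^{(1-s)p^--n}+|x-y|^{(1-s)p^+-n})$, which is integrable around $y=x$ since $(1-s)p^\pm>0$. On the intermediate annulus $B_r(x_0)\setminus B_{r-\rho}(x)$, $|x-y|\geq r-\rho$ and $F,\eta$ are bounded on $B_r$, so $a,b$, $|b-a|/\theta$ and $g(a)+g(b)$ are all bounded by constants; since the annulus has finite measure, the contribution is again $\leq C\theta$.

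\textbf{Far part and conclusion.} For $y\in\mathbb{R}^n\setminus B_r(x_0)$ one has $\eta(y)=0$, so $|b-a|\leq \theta|\eta(x)|/|x-y|^s\leq \theta/(r-\rho)^s$. Following the estimate used in Remark \ref{PVwelldefined}, the geometric bound $|x-y|\geq c(1+|y|)$ holds uniformly for $x\in B_\rho(x_0)$ and $y\notin B_r(x_0)$, which gives $a\leq C(|F(x)|+|F(y)|)/(1+|y|^s)$. The $\Delta_2$ property for $g$, which is a consequence of \eqref{gg product}, then yields $g(a)+g(b)\leq C\bigl(g(|F(y)|/(1+|y|^s))+1\bigr)$. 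Putting this together,
\begin{equation*}
    \int_{\mathbb{R}^n\setminus B_r(x_0)}|G(b)-G(a)|\frac{dy}{|x-y|^n} \leq C\theta\int_{\mathbb{R}^n\setminus B_r(x_0)}\Bigl(g\Bigl(\tfrac{|F(y)|}{1+|y|^s}\Bigr)+1\Bigr)\frac{dy}{(1+|y|)^{n+s}},
\end{equation*}
and this last integral is finite thanks to $F\in L_g(\mathbb{R}^n)$ and the decay of $(1+|y|)^{-n-s}$. Summing the three contributions proves \eqref{difDgs}. The main obstacle is the far-region estimate: one has to convert the pointwise bound, which involves $g(a)$, into an integrable tail uniformly in $x\in B_\rho(x_0)$, combining carefully the $L_g$-integrability of $F$ (stated in terms of $g$, not $G$), the $\Delta_2$ condition on $g$, and the geometric lower bound $|x-y|\gtrsim 1+|y|$.
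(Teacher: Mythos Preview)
Your argument is correct and in fact a bit more direct than the paper's. The paper introduces an auxiliary radius $\delta\in(0,1)$ and splits the difference as
\[
|D_g^sF(x)-D_g^sF_\theta(x)|\le \underbrace{\int_{B_\delta(x)}G(|D_sF|)\,\frac{dy}{|x-y|^n}}_{I_1}+\underbrace{\int_{B_\delta(x)}G(|D_sF_\theta|)\,\frac{dy}{|x-y|^n}}_{I_2}+\underbrace{\int_{\mathbb{R}^n\setminus B_\delta(x)}|G(|D_sF|)-G(|D_sF_\theta|)|\,\frac{dy}{|x-y|^n}}_{I_3},
\]
bounding $I_1+I_2\le C\bigl(\delta^{(1-s)p^-}+\delta^{(1-s)p^+}\bigr)$ from the Lipschitz estimate alone, and $I_3\le C\theta\bigl(\delta^{-sp^-}+\delta^{-sp^+}\bigr)$ via the inequality $|G(a)-G(b)|\le C|a-b|\,g(a+|a-b|)$ together with the $L_g$-tail bound. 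This forces a two-step choice: first pick $\delta$ small to make $I_1+I_2<\varepsilon/2$, then pick $\theta$ small (depending on $\delta$) to make $I_3<\varepsilon/2$.

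Your route avoids the extra parameter $\delta$. By applying \eqref{inq G and g} on the whole of $\mathbb{R}^n$ you extract the factor $\theta$ uniformly and are left with an integral that you show is bounded independently of $x\in B_\rho(x_0)$ and $\theta\in[0,1]$, using only the fixed geometry $B_{r-\rho}(x)\subset B_r(x_0)$ and the $L_g$-tail estimate from Remark~\ref{PVwelldefined}. This yields the sharper conclusion $\sup_{B_\rho(x_0)}|D_g^sF-D_g^sF_\theta|\le C\theta$ and hence a simple choice $\widetilde{\theta}=\min\{1,\varepsilon/C\}$. The paper's decomposition would be needed if one only had the cruder bound $|G(a)-G(b)|\le G(a)+G(b)$ near the diagonal; since \eqref{inq G and g} is available globally, your single-step argument is the cleaner one.
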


\begin{proof}
    Let $\varepsilon>0$, $0<\rho<r$ and $\eta\in C^2_0(B_r(x_0))$ 
    such that $0\leq \eta\leq 1$. Take $0<\theta<1$, $x\in B_\rho(x_0)$ and
    $0<\delta<r-\rho$ small enough to choose later. We have
    \begin{equation*}
        \begin{split}
            &|D^s_gF(x)-D^s_gF_\theta(x)| = 
            \left|\int_{\mathbb{R}^n}\left[G\left(|D_sF|\right)-G\left(|D_s
            F_\theta|\right)|\right]\frac{dy}{|x-y|^n}\right|\\&\leq 
            \int_{B_\delta(x)}\left|G\left(|D_sF|\right)-G\left(|D_sF_
            \theta|\right)\right|\frac{dy}{|x-y|^n} + 
            \int_{\mathbb{R}^n\setminus 
            B_\delta(x)}\left|G\left(|D_sF|\right)-G\left(|D_sF_\theta|
            \right)\right|\frac{dy}{|x-y|^n}\\& \leq 
            \int_{B_\delta(x)}\left|G\left(|D_sF|\right)
            \right|\frac{dy}{|x-y|^n} + 
            \int_{B_\delta(x)}\left|G\left(|D_sF_\theta|\right)\right|
            \frac{dy}{|x-y|^n} \\& \qquad + \int_{\mathbb{R}^n\setminus 
            B_\delta(x)}\left|G\left(|D_sF|\right)-G\left(|D_sF_\theta|
            \right)\right|\frac{dy}{|x-y|^n}\\& = I_1+I_2+I_3.
    \end{split}
    \end{equation*}

    Observe that, for $x\in B_\rho(x_0)$, $y\in B_\delta(x)$ and 
    $\delta<r-\rho$ it is true that $y\in B_{\rho+\delta}(x_0)\subset 
    B_r(x_0)$. Then, since $F$ is Lipschitz in $B_r(x_0)$
    $$
        |F(x)-F(y)|\leq K_F|x-y|.
    $$
    Taking $\delta<1$ and using \eqref{G product} we have for $I_1$
    \begin{equation}\label{I1}
        \begin{split}
            \int_{B_\delta(x)}G\left(|D_sF|\right)&\frac{dy}{|x-y|^n}  \leq
            \int_{B_\delta(x)}G\left(K_F|x-y|^{1-s}\right)
            \frac{dy}{|x-y|^n}  \\& \leq 
            CG(K_F)\int_{B_\delta(x)}
            \max\{|x-y|^{(1-s)p^{-}},|x-y|^{(1-s)p^+}\}\frac{dy}{|x-y|^n}\\
            & \leq CG(K_F)\left[\int_{B_\delta(x)}|x-y|^{(1-s)p^{-}-n}\,dy 
            + \int_{B_\delta(x)}|x-y|^{(1-s)p^+-n}\,dy\right]\\& 
            = C(G,K_F, n, s) 
            \left[\delta^{(1-s)p^-}+\delta^{(1-s)p^+}\right]. 
    \end{split}
    \end{equation}

    On the other hand, since $\eta\in C^2_0(B_r(x_0))$, 
    \begin{equation*}
        \begin{split}
            |F(x)+\theta\eta(x)-F(y)-\theta\eta(y)|&\leq 
            K_F|x-y|+\theta\sup_{z\in B_{\rho+\delta}(x_0)}|\nabla 
            \eta(z)||x-y|.
        \end{split}
    \end{equation*}Then, by \eqref{tineqG}, \eqref{G product}, and 
    recalling that $\theta<1$ we get for $I_2$ 
    \begin{equation}\label{I2}
        \begin{split}
            &\int_{B_\delta(x)}G\left(|D_sF_\theta|\right)
            \frac{dy}{|x-y|^n} \\& \leq \int_{B_\delta(x)}
            G\left(\frac{K_F|x-y|+\theta\sup_{z\in B_{\rho+\delta}(x_0)}
            |\nabla \eta(z)||x-y|}{|x-y|^s}\right)\frac{dy}{|x-y|^n} \\
            & \leq C\left[\int_{B_\delta(x)}
            \hspace{-.2cm}
            \dfrac{G\left(K_F|x-y|^{1-s}\right)}{|x-y|^n}dy+
            \int_{B_\delta(x)}\hspace{-.2cm}
            \dfrac{G\left(\theta\sup_{z\in 
            B_{\rho+\delta}(x_0)}|\nabla\eta(z)||x-y|^{1-s}\right)}
            {|x-y|^n}dy\right]\\& \leq C\left(G,K_F, 
            \sup_{B_{\rho+\delta}(x_0)}
            |\nabla \eta|,n,s\right)
            \left[\delta^{(1-s)p^{-}}+\delta^{(1-s)p^+}\right].
        \end{split}
    \end{equation}

    Finally, for $I_3$ we use the inequalities
    $$
        |G(a)-G(b)|\leq C|a-b|g(a+|a-b|), \quad a,b\geq 0,
    $$
    and $||a|-|b||\leq |a-b|$, and recalling that $0\leq \eta\leq 1$ 
    we get
    \begin{equation*}
        \begin{split}
            \int_{\mathbb{R}^n\setminus 
            B_\delta(x)}&\left|G\left(|D_sF|\right)-
            G\left(|D_sF_\theta|\right)\right|\frac{dy}{|x-y|^n} \\& \leq C
            \int_{\mathbb{R}^n\setminus
            B_\delta(x)}\frac{\theta|\eta(x)-\eta(y)|}{|x-y|^s}
            g\left(\frac{|F(x)-F(y)|+
            \theta|\eta(x)-\eta(y)|}{|x-y|^s}\right)\frac{dy}{|x-y|^n}\\& 
            \leq C2\theta \int_{\mathbb{R}^n\setminus
            B_\delta(x)}\frac{1}{|x-y|^s}
            g\left(\frac{2\theta+|F(x)|+|F(y)|}{|x-y|^s}\right)
            \frac{dy}{|x-y|^n}.
        \end{split}
    \end{equation*}Now, by \eqref{tineqgeq} we have 
    \begin{equation*}
        \begin{split}
            &\int_{\mathbb{R}^n\setminus 
            B_\delta(x)}\left|G\left(|D_sF|\right)-
            G\left(|D_sF_\theta|\right)\right|\frac{dy}{|x-y|^n} \\& \leq 
            C\theta \left[\int_{\mathbb{R}^n\setminus 
            B_\delta(x)}
            g\left(\frac{2\theta}{|x-y|^s}\right)
            \frac{dy}{|x-y|^{n+s}}\right. 
            + \int_{\mathbb{R}^n\setminus 
            B_\delta(x)}
            g\left(\frac{|F(x)|}{|x-y|^s}\right)\frac{dy}{|x-y|^{n+s}}
            \\& \qquad 
           + 
           \left.
           \int_{\mathbb{R}^n\setminus 
            B_\delta(x)}
            g\left(\frac{|F(y)|}{|x-y|^s}\right)
            \frac{dy}{|x-y|^{n+s}}\right]\\& = C\theta(I_3^1+I_3^2+I_3^3).
        \end{split}
    \end{equation*}
    For $I_3^1$ we use \eqref{H111} 
    \begin{equation}\label{I3_1}
        \begin{split}
            \int_{\mathbb{R}^n\setminus B_\delta(x)}&
            g\left(\frac{2\theta}{|x-y|^s}\right)\frac{dy}{|x-y|^{n+s}} \\
            & \leq C\int_{\mathbb{R}^n\setminus 
            B_\delta(x)}
            \max\left\{\frac{(2\theta)^{p^{-}-1}}{|x-y|^{n+sp^{-}}}, 
            \frac{(2\theta)^{p^+-1}}{|x-y|^{n+sp^+}}\right\}\,dy \\& \leq 
            C\left[(2\theta)^{p^{-}-1}\int_{\mathbb{R}^n\setminus 
            B_\delta(x)}\frac{dy}{|x-y|^{n+sp^{-}}} + 
            (2\theta)^{p^+-1}\int_{\mathbb{R}^n\setminus 
            B_\delta(x)}\frac{dy}{|x-y|^{n+sp^+}}\right]\\& \leq 
            C\left[(2\theta)^{p^{-}-1}\delta^{-sp^{-}} + 
            (2\theta)^{p^+-1}\delta^{-sp^+}\right] \\& \leq 
            C[\delta^{-sp^-}+\delta^{-sp^+}].
        \end{split}
    \end{equation}

    Reasoning in the same way and taking $A=\|F\|_{L^\infty(B_\rho(x_0))}$,
    we have for $I_3^2$
    \begin{equation}\label{I3_2}
        \int_{\mathbb{R}^n\setminus B_\delta(x)} 
        g\left(\frac{|F(x)|}{|x-y|^s}\right)\frac{dy}{|x-y|^{n+s}} \leq 
        C\max\left\{A^{p^{-}-1}, A^{p^{+}-1}\right\}[\delta^{-sp^-}
        +\delta^{-sp^+}]
    \end{equation}

    {Finally, take $R>0$ such that $B_{r}(x_0)\subset B_R$. Then, since 
    $\delta<1$ and $F\in L_g(\mathbb{R}^n)$, by Remark \ref{PVwelldefined} 
    for $B_{\delta}(x)\subset B_R$, we get for $I_3^3$}
    \begin{equation}\label{ineqLg}
    {
        \int_{\mathbb{R}^n\setminus B_\delta(x)}\frac{dy}{|x-y|^s}\leq 
        C\left(\frac{\delta}{1+R}\right)^{n+sp^{-}}\int_{\mathbb{R}^n
        \setminus B_{\delta}(x)}g\left(\frac{|F(y)|}{1+|y|^s}\right)
        \frac{dy}{1+|y|^{n+s}} \leq C.
        }
    \end{equation}
    {Observe that, because of the choice of $R$, the constant $C$ does not 
    depend on $x$.} Moreover,  since $\delta<1$, 
    $\delta^{-sp^-}+\delta^{-sp^+}>1$. Thus we can finally get for $I_3^3$ 
    \begin{equation}\label{I3_3}
        \int_{\mathbb{R}^n\setminus 
        B_\delta(x)}g\left(\frac{|F(y)|}{|x-y|^s}\right)
        \frac{dy}{|x-y|^{n+s}}\leq C[\delta^{-sp^-}+\delta^{-sp+}].
    \end{equation}
    Hence, by \eqref{I3_1}, \eqref{I3_2} and \eqref{I3_3} we have
    \begin{equation}\label{I30}
        \int_{\mathbb{R}^n\setminus 
        B_\delta(x)}\left|G\left(|D_sF|\right)-G\left(|D_sF_\theta|\right)
        \right|\frac{dy}{|x-y|^n}\leq 
        C\theta[\delta^{-sp^-}+\delta^{-sp^+}]
    \end{equation}
    and joining \eqref{I1}, \eqref{I2}, and \eqref{I30} we finally get 
    \begin{equation*}
        \begin{split}
        |D_g^sF(x)-D_g^sF_\theta(x)|&\leq 
        C(\delta^{(1-s)p^{-}}+\delta^{(1-s)p^+}+\theta\delta^{-sp^-}+\theta
        \delta^{-sp^+})  \\
        & = C(\delta^{-sp^-}(\delta^{p^-}+\theta)
        +\delta^{-sp^+}(\delta^{p^+}+\theta)).  
        \end{split}
    \end{equation*}
    Taking 
    \[
        \begin{split}
            &0<\delta<\min\left\{r-\rho, 1, 
            \left(\frac{\varepsilon}{2C}\right)^{1/(1-s)p^-}, 
            \left(\frac{\varepsilon}{2C}\right)^{1/(1-s)p^+}\right\}
            \text{ and }\\ 
            &0\leq \theta <
            \min\left\{\frac{\varepsilon\delta^{sp^+}}{2C}
            -\delta^{p^+},\frac{\varepsilon\delta^{sp^-}}{2C}-\delta^{p^-}, 1\right\},
        \end{split}
    \]
    we obtain \eqref{difDgs}.
\end{proof}

\begin{lemma}\label{ContPert}
    Let $B_r(x_0)\subset \Omega$ and $\psi\in C^2(B_r(x_0))\cap 
    L^\infty(\mathbb{R}^n)$ satisfying Definition \ref{dv} (iii) (a) or (b)
    with $\beta>\frac{sp^{-}}{p^{-}-1}$. Then for all $\varepsilon>0$ and 
    $\rho'>0$ there are $\theta'>0$, $\rho\in (0,\rho')$ and $\eta\in 
    C^2_0(B_{\rho/2}(x_0))$ with $0\leq \eta\leq 1$ and $\eta(x_0)=1$ such 
    that $\psi_{\theta} = \psi + \theta\eta$ satisfies
    \begin{equation}\label{supdifpert}
        \sup_{B_\rho(x_0)}|(-\Delta_g)^s\psi - (-\Delta_g)^s\psi_\theta|<\varepsilon
    \end{equation}
    for $0\leq\theta<\theta'.$
\end{lemma}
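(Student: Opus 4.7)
The plan is to follow the same overall scheme as Lemma \ref{Continuidad D}, adapted to the nonlocal operator $(-\Delta_g)^s$ whose main new feature is the singularity at $y=x$ that forces a principal-value interpretation. For $x$ in a small neighborhood of $x_0$ I would split the difference into a ``singular part'' on a small ball $B_\sigma(x)$ (made small by choosing $\sigma$ small via Lemmas \ref{lemma:PV1} and \ref{lemma:PV2}) and a ``regular part'' on $\mathbb{R}^n\setminus B_\sigma(x)$ (made small by choosing $\theta$ small, using the continuity of $g$ and the fact that $\psi-\psi_\theta=-\theta\eta$).

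First I would fix a cut-off $\eta\in C^2_0(B_{\rho/2}(x_0))$ with $\eta\equiv 1$ on $B_{\rho/4}(x_0)$ and $0\leq\eta\leq 1$, and pick $\rho<\rho'$ small enough that $B_\rho(x_0)\subset B_r(x_0)$. With such a choice, $\psi_\theta$ coincides with $\psi+\theta$ on $B_{\rho/4}(x_0)$, so $\nabla\psi_\theta=\nabla\psi$ and $D^2\psi_\theta=D^2\psi$ there; in particular the critical-point structure of $\psi$ near $x_0$ is preserved. For $\theta$ sufficiently small (depending on $\min_{\overline{B_{\rho/2}\setminus B_{\rho/4}}}|\nabla\psi|$ in case (a), and on the $C^2_\beta$-constants together with $\|\nabla\eta\|_\infty$, $\|D^2\eta\|_\infty$ in case (b)), the perturbed function $\psi_\theta$ inherits the admissibility hypotheses of Definition \ref{dv} (iii) (same case as $\psi$) with constants uniform in $\theta\in[0,1]$.

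Then, for $x\in B_\rho(x_0)$ and $\sigma>0$ small, I would write
\[
(-\Delta_g)^s\psi(x)-(-\Delta_g)^s\psi_\theta(x)=\text{P.V.}\!\int_{B_\sigma(x)}\!\bigl[g(D_s\psi)-g(D_s\psi_\theta)\bigr]\frac{dy}{|x-y|^{n+s}}+\!\int_{\mathbb{R}^n\setminus B_\sigma(x)}\!\bigl[g(D_s\psi)-g(D_s\psi_\theta)\bigr]\frac{dy}{|x-y|^{n+s}},
\]
and bound each piece separately. On $B_\sigma(x)$, the triangle inequality reduces the task to bounding the principal-value integrals for $\psi$ and $\psi_\theta$ individually; each is controlled by some $C_\sigma$ with $C_\sigma\to 0$ as $\sigma\to 0^+$ by Lemma \ref{lemma:PV1} (case (a)) or Lemma \ref{lemma:PV2} (case (b)), with $C_\sigma$ uniform in $\theta\in[0,1]$ thanks to step~(ii). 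On $\mathbb{R}^n\setminus B_\sigma(x)$, using $D_s\psi-D_s\psi_\theta=-\theta D_s\eta$ together with the quasi-Lipschitz bound for $g$ from Lemma \ref{lema:aux1}, the integrand is dominated by a constant multiple of $\theta|D_s\eta|$ times a bound involving $\max\{(|D_s\psi|+\theta|D_s\eta|)^{p^\pm-2}\}$; since $\eta$ has compact support in $B_{\rho/2}(x_0)$ and $|\eta(x)-\eta(y)|\leq\min\{2,C|x-y|\}$, and $\psi\in L_g(\mathbb{R}^n)$ supplies integrability at infinity exactly as in \eqref{ineqLg}, this yields a bound of the form $C(\sigma,\psi,\eta)\,\theta$.

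The conclusion is then reached by first choosing $\sigma$ small so that the singular part is $<\varepsilon/2$ uniformly in $x\in B_\rho(x_0)$ and $\theta\in[0,1]$, and then $\theta'>0$ small enough that the outer part is $<\varepsilon/2$ for every $0\leq\theta<\theta'$. The main technical obstacle is step~(ii): checking that $\psi_\theta$ lies in the admissible test-function class of Definition \ref{dv} (iii) with constants independent of $\theta$, especially in case (b) where the isolated critical point and the $C^2_\beta$ structure must survive the perturbation; choosing $\eta$ to be constant on $B_{\rho/4}(x_0)$ is the crucial device that makes this transparent, since the derivatives of $\psi_\theta$ and $\psi$ agree in the neighborhood where the only critical point lives.
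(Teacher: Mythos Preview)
Your proposal is correct and follows essentially the same scheme as the paper: verify that $\psi_\theta$ remains an admissible test function with $\theta$-uniform constants (in case (b) the paper additionally requires $\eta$ to satisfy $|D^2\eta|\leq M d_\eta^{\beta-2}$, though your simpler observation that $d_\psi\geq\rho/4$ on the annulus $B_\rho\setminus B_{\rho/4}$ achieves the same end), then split the difference into a principal-value piece on $B_\sigma(x)$ handled by the triangle inequality and Lemmas \ref{lemma:PV1}--\ref{lemma:PV2}, and a tail on $\mathbb{R}^n\setminus B_\sigma(x)$ handled via Lemma \ref{lema:aux1}. The only small imprecision is in the tail estimate: the paper uses $\psi\in L^\infty(\mathbb{R}^n)$ directly (not the $L_g$ bound \eqref{ineqLg}), combining $|D_s\psi-D_s\psi_\theta|\leq 2\theta/|x-y|^s$ with the monotonicity of $b\mapsto(a+b)^{p-2}b$ to obtain a bound $C(\sigma)\,\theta^{\min\{1,\,p^--1\}}$ rather than $C(\sigma)\,\theta$ when $p^-<2$, which of course still suffices.
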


\begin{proof}
    Take $\varepsilon>0$, $\rho'>0$ and first assume that $\nabla\psi(x_0)\neq 0$. Then there is $\rho\in (0,\rho')$ such that $|\nabla\psi|>\tau$ in $B_{2\rho}(x_0)$ for some $\tau>0$. Now let $\eta\in C^2_0(B_{\rho/2}(x_0))$ with $0\leq \eta\leq 1$ and $\eta(x_0)=1$. Then there is $\theta''>0$ such that $|\nabla \psi_\theta|>\tau/2$ in $B_{2\rho}(x_0)$ when $0\leq \theta<\theta''$. Now observe that, for $x\in B_{\rho}(x_0)$,  $B_{\rho/2}(x)\subset B_{3\rho/2}(x_0)\subset\subset B_{2\rho}(x_0)\subset{\{d_{\psi_\theta}>0\}}$. Then we may apply Lemma \ref{lemma:PV1} to $\psi_\theta\in C^2(D)$ for $D=B_{3\rho/2(x_0)}$. Therefore we can take $\delta>0$ small enough such that, for every $x\in B_{\rho}(x_0)$ and $0\leq \theta<\theta''$ 
    \begin{equation}\label{PVepsilon1}
        \left|\text{P.V.} 
        \int_{B_\delta(x)}g\left(D_s\psi_\theta\right)\frac{dy}{|x-y|^{n+s}
        }\right|<\frac{\varepsilon}{4}.
    \end{equation}If $p^{-}>\frac{2}{2-s}$, we can get \eqref{PVepsilon1} 
    using Lemma \ref{lemma:PV1} whatever the value of $\nabla \psi(x_0)$ 
    is.
    Now consider the case 
    $1<p^{-}\leq \tfrac2{2-s}$, $|\nabla \psi(x_0)| = 0$ with $x_0$ an 
    isolated critical point and $\psi\in C^2_\beta(B_r(x_0))$. Then, 
    we can take $\rho>0$ small enough such that $|\nabla \psi|\neq 0$ in 
    $B_{3\rho}(x_0)\setminus \{x_0\}$. Let $\eta\in C^2_0(B_{\rho/2}(x_0))$
    such that $0\leq \eta\leq 1$, $\eta = 1$ in $B_{\rho/4}(x_0)$ and 
    $|D^2\eta|\leq Md_{\eta}^{\beta-2}$ for some $M>0$. Then $\nabla 
    \psi_{\theta}\neq 0$ in $B_{2\rho}(x_0)\setminus \{x_0\}$ for $\theta$ 
    small enough and, therefore, $d_{\psi} = d_{\psi_{\theta}}$ in 
    $B_{\rho}(x_0)$ for all such $\theta$. Also, since $\eta\in 
    C^2_0(B_{\rho/2}(x_0))$ and $|\nabla \psi|\neq 0$ in 
    $B_{3\rho}(x_0)\setminus \{x_0\}$, we may take $\theta$ small enough 
    such that $\theta|\nabla \eta|\leq \frac{1}{2}|\nabla \psi|$ in 
    $B_{\rho}(x_0)$. Then
    \begin{equation}\label{ineqpsi}
      \frac{1}{2}|\nabla \psi|\leq |\nabla \psi|-\theta|\nabla \eta|\leq 
      |\nabla \psi_{\theta}|\leq |\nabla \psi|+\theta|\nabla \eta|\leq 
      2|\nabla \psi|, \quad \text{in } B_{\rho}(x_0).  
    \end{equation}Moreover, since $d_\eta\leq d_\psi = d_{\psi_\theta}$ in 
    $B_{\rho}(x_0)$ and $\psi\in C^2_\beta(B_{\rho}(x_0))$ it holds
    \begin{equation}\label{ineqhespsitheta}
        |D^2\psi_{\theta}| \leq |D^2\psi|+\theta|D^2\eta|\leq 
        \|\psi\|_{C^2_\beta (B_\rho(x_0))}d_{\psi}^{\beta-2}+\theta M 
        d_{\eta}^{\beta-2}\leq cd_{\psi_\theta}^{\beta-2}.
    \end{equation}Therefore, since $d_{\psi} = d_{\psi_\theta}$ in 
    $B_{\rho}(x_0)$ and $\psi\in C^2_\beta(B_{\rho}(x_0))$ we get, by 
    \eqref{ineqpsi} and \eqref{ineqhespsitheta}, that $\psi_{\theta}\in 
    C^2_{\beta}(B_{\rho}(x_0))$. Then we may apply Lemma \ref{lemma:PV2} to
    find some $\delta\in (0, \rho)$ such that \eqref{PVepsilon1} also holds
    in this case.
    
    Now we proceed with the proof of \eqref{supdifpert}. Take $x\in 
    B_{\rho}(x_0)$. Then, by \eqref{PVepsilon1} and Lemma \ref{lema:aux1},
    taking $T=|D_s\psi|+|D_s\psi-D_s\psi_\theta|$
    we have
    \begin{equation}\label{Eqdif1}
        \begin{split}
            &|(-\Delta_g)^s\psi(x)-(-\Delta_g)^s\psi_{\theta}(x)|  = \left|\text{P.V.}\int_{\mathbb{R}^n}\frac{g(D_s\psi)-g(D_s\psi_{\theta})}{|x-y|^{n+s}}dy\right|\\& \leq \left|\text{P.V.}\int_{B_{\delta}(x)}\frac{g(D_s\psi)-g(D_s\psi_{\theta})}{|x-y|^{n+s}}dy\right|+\left|\text{P.V.}\int_{\mathbb{R}^n\setminus B_{\delta}(x)}\frac{g(D_s\psi)-g(D_s\psi_{\theta})}{|x-y|^{n+s}}dy\right|\\& \leq \frac{\varepsilon}{2} + \int_{\mathbb{R}^n\setminus B_{\delta}(x)}\frac{|g(D_s\psi)-g(D_s\psi_{\theta})|}{|x-y|^{n+s}}dy\\& \leq \frac{\varepsilon}{2}+C\int_{\mathbb{R}^n\setminus B_{\delta}(x)}\left[\frac{|D_s\psi-D_s\psi_{\theta}|}{|x-y|^{n+s}}\max\{T^{p^+-2}, T^{p^--2}\}\right]dy\\& \leq \frac{\varepsilon}{2} + C\int_{\mathbb{R}^n\setminus B_{\delta}(x)}\frac{|D_s\psi - D_s\psi_{\theta}|}{|x-y|^{n+s}}
            \left(T^{p^+-2}+T^{p^--2}\right)\,dy. 
        \end{split}
    \end{equation}
    Now we use the monotonicity of $(a+b)^{p-2}b$ for $a,b\geq 0$ and $p>1$, the fact that $|D_s\psi-D_s\psi_\theta|\leq \tfrac{2\theta}{|x-y|^{s}}$ and \eqref{Eqdif1} to get
    \begin{equation}\label{Eqdif2}
    \begin{split}
        &|(-\Delta_g)^s\psi(x)-(-\Delta_g)^s\psi_{\theta}(x)| \\& \leq   
        \frac{\varepsilon}{2} + C\theta\int_{\mathbb{R}^n\setminus 
        B_{\delta}(x)}\left[\frac{(|\psi(x)-\psi(y)|+
        2\theta)^{p^+-2}}{|x-y|^{n+sp^+}}+\frac{(|\psi(x)-\psi(y)|
        +2\theta)^{p^--2}}{|x-y|^{n+sp^-}}\right]dy\\
        & = \frac{\varepsilon}{2} +
        C\theta  \int_{\mathbb{R}^n\setminus 
        B_{\delta}(x)}\frac{(|\psi(x)-\psi(y)|
        +2\theta)^{p^+-2}}{|x-y|^{n+sp^+}}\,dy\\& + 
        C\theta\int_{\mathbb{R}^n\setminus 
        B_{\delta}(x)}\frac{(|\psi(x)
        -\psi(y)|+2\theta)^{p^--2}}{|x-y|^{n+sp^-}}\,dy \\
        &= \frac{\varepsilon}{2} + I + II. 
    \end{split}
    \end{equation}

    Next we estimate I assuming first that $1<p^+<2$. Then,  
    \begin{equation}\label{difp+<2}
         C\theta  \int_{\mathbb{R}^n\setminus 
         B_{\delta}(x)}\frac{(|\psi(x)-\psi(y)|+2\theta)^{p^+-2}}{|x-y|^{n+
         sp^+}}\,dy \leq  C\theta^{p^+-1}\delta^{-sp^+}  
         <\frac{\varepsilon}{4}.
    \end{equation}
    for all $\theta$ small enough. 
    Now suppose $p^+\geq 2$. Observe that $\theta^{p^+-1}<\theta$ for 
    $\theta<1$. Therefore, since $\psi\in L^\infty(\mathbb{R}^n)$ we can 
    estimate (I) for $\theta$ small enough as
    \begin{equation}\label{difp+>2}
        \begin{split}
         C\theta  \int_{\mathbb{R}^n\setminus 
         B_{\delta}(x)}\frac{(|\psi(x)-\psi(y)|
         +2\theta)^{p^+-2}}{|x-y|^{n+sp^+}}\,dy &\leq C\theta  
         \int_{\mathbb{R}^n\setminus 
         B_{\delta}(x)}
         \frac{(2\|\psi\|_{L^\infty(\mathbb{R}^n)}
         +2\theta)^{p^+-2}}{|x-y|^{n+sp^+}}\,dy \\& 
         \leq C\theta 
         (\|\psi\|_{L^\infty(\mathbb{R}^n)}^{p^+-2}+\theta^{p^+-2})
         \delta^{-sp^+}\\& \leq C\theta\delta^{-sp^+} \\& 
         <\frac{\varepsilon}{4}.
     \end{split}
    \end{equation}
    Reasoning in the same way, we can estimate II for all $\theta$ small 
    enough and finally use \eqref{Eqdif2} to get \eqref{supdifpert}.
\end{proof}

\begin{lemma}\label{contop}
    Let $B_r(x_0)\subset \Omega$ and $\psi\in C^2(B_r(x_0))\cap L_g(\mathbb{R}^n)$. We 
    also assume $\psi\in C^2_\beta(B_r(x_0))$ for some $\beta>\tfrac{sp^{-}}{p^{-}-1}$ 
    if $1<p^{-}\leq \tfrac2{2-s}$ and $\nabla \psi(x_0)=0$ with $x_0$ an isolated point 
    in $B_r(x_0)$. Then $(-\Delta_g)^s\psi$ is continuous in $B_r(x_0)$.
\end{lemma}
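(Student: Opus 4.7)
The plan is to prove continuity at any fixed $x_*\in B_r(x_0)$ by splitting the defining integral at a small scale $\rho>0$. Given $\varepsilon>0$, choose $\rho_0>0$ with $B_{2\rho_0}(x_*)\subset B_r(x_0)$; for $\rho<\rho_0/2$ and $y\in B_{\rho_0/2}(x_*)$ (so $B_\rho(y)\subset B_{\rho_0}(x_*)$), write
\[
(-\Delta_g)^s\psi(y) = T^{\mathrm{loc}}_\rho(y) + T^{\mathrm{far}}_\rho(y),
\]
where $T^{\mathrm{loc}}_\rho(y)$ is the principal value over $B_\rho(y)$ and $T^{\mathrm{far}}_\rho(y)$ is the (absolutely convergent) integral over $\mathbb{R}^n\setminus B_\rho(y)$. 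The goal is then to show that $T^{\mathrm{loc}}_\rho(y)$ is small uniformly in $y$ near $x_*$ when $\rho$ is small, and that $T^{\mathrm{far}}_\rho$ is continuous at $x_*$ for each fixed $\rho$.

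For the local part, I would apply Lemmas \ref{lemma:PV1} and \ref{lemma:PV2} with constants taken uniformly in $y\in B_{\eta_0}(x_*)$ for some $\eta_0\in(0,\rho_0/2)$. This requires a case analysis on $\nabla\psi(x_*)$: if $\nabla\psi(x_*)\neq 0$, continuity gives $|\nabla\psi|\ge c>0$ on a neighborhood, so Case 2 of Lemma \ref{lemma:PV1} applies with a constant $C_\rho$ independent of $y$; if $\nabla\psi(x_*)=0$ and $p^->\tfrac{2}{2-s}$, then since $\psi\in C^2(B_r(x_0))$ the expansion $|\psi(z)-\psi(y)-\nabla\psi(y)\cdot(z-y)|\le C|z-y|^2$ holds with $C$ uniform in $y\in B_{\eta_0}(x_*)$; subtracting the linear piece (whose principal value vanishes by the oddness of $g$) and arguing as in Case 1 of Lemma \ref{lemma:PV1} yields a uniform bound $|T^{\mathrm{loc}}_\rho(y)|\le C\rho^{(2-s)p^--2}$; in the remaining case ($\nabla\psi(x_*)=0$ at an isolated critical point with $\psi\in C^2_\beta$), the same strategy works via Lemma \ref{lemma:PV2}, using that the $C^2_\beta$-norm controls the required quantities uniformly on $B_{\eta_0}(x_*)$. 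Fixing $\rho$ small enough, one obtains $\sup_{y\in B_{\eta_0}(x_*)}|T^{\mathrm{loc}}_\rho(y)|<\varepsilon/3$.

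For the nonlocal part, with $\rho$ now fixed, I would prove continuity of $T^{\mathrm{far}}_\rho$ at $x_*$ by dominated convergence. For $y$ sufficiently close to $x_*$, say $y\in B_{\rho/4}(x_*)$, both $B_\rho(y)$ and $B_\rho(x_*)$ contain $B_{3\rho/4}(x_*)$ and are contained in $B_{5\rho/4}(x_*)$; replacing each integration domain by the common set $\mathbb{R}^n\setminus B_{5\rho/4}(x_*)$ up to an error tending to zero with $|y-x_*|$ (using the bounds available on $B_{5\rho/4}(x_*)\subset B_r(x_0)$ from $\psi\in C^2$), the remaining integrand converges pointwise as $y\to x_*$ by continuity of $\psi$, and is dominated by a function of the form $C\,g\bigl((1+|\psi(z)|)/(1+|z|^s)\bigr)/(1+|z|^{n+s})$ that is integrable thanks to $\psi\in L_g(\mathbb{R}^n)$, exactly as in Remark \ref{PVwelldefined}. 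Lebesgue's theorem yields $|T^{\mathrm{far}}_\rho(y)-T^{\mathrm{far}}_\rho(x_*)|<2\varepsilon/3$ for $y$ close to $x_*$, and combining with the local estimate completes the proof.

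The main obstacle is the uniform control of $T^{\mathrm{loc}}_\rho$ when $\nabla\psi(x_*)=0$: near $x_*$ one may have points $y$ with $\nabla\psi(y)\ne 0$ arbitrarily small, so neither Case 1 nor Case 2 of Lemma \ref{lemma:PV1} applies verbatim with uniform constants. The resolution is to run the cancellation argument of Case 2 using the Taylor expansion at $y$, exploiting that $\sup_{B_{\rho_0}(x_*)}|D^2\psi|<\infty$ (or $\psi\in C^2_\beta$) to get a bound depending only on this $C^2$-quantity and on $\rho$, and not on $|\nabla\psi(y)|$.
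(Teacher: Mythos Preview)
Your proposal is correct and follows essentially the same strategy as the paper: split the operator into a local principal-value piece over $B_\rho(y)$ and a far piece over $\mathbb{R}^n\setminus B_\rho(y)$, make the local piece uniformly small via Lemmas \ref{lemma:PV1}--\ref{lemma:PV2} (with the same case analysis on the gradient), and handle the far piece by dominated convergence using the $L_g$ tail bound of Remark \ref{PVwelldefined}. The obstacle you flag---uniform control of $T^{\mathrm{loc}}_\rho$ near an isolated critical point---is exactly what the paper addresses by invoking Lemma \ref{lemma:PV2} (whose hypothesis $d_\psi(y)<\rho$ is precisely tailored to points $y$ near the critical point with small but possibly nonzero gradient), so your proposed resolution coincides with theirs.
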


\begin{proof}
    Take $x\in B_r(x_0)$ and $\varepsilon>0$. First suppose $p^->\tfrac2{2-s}$ and choose
    $\delta>0$ such that $\overline{B_{\delta}(x)}\subset B_r(x_0)$. 
    Then, for $y\in B_{\delta}(x)$, there is $\delta'>0$ such that 
    $B_{\delta'}(y)\subset B_{\delta}(x)\subset\subset  \Omega$. Hence, 
    by Lemma \ref{lemma:PV1} there is $\rho>0$ such that 
     \begin{equation}\label{pvball}
        \left|\text{P.V.}\int_{B_\rho(y)}g\left(D_s\psi\right)\frac{dz}{|z-y|^{n+s}}\right|<\frac{\varepsilon}{4}
    \end{equation}whenever $|x-y|<\delta$. 
    
    Now assume $p^-\leq \tfrac2{2-s}$. If $\nabla \psi(x_0)\neq 0$ we take $r>0$ such 
    that $\nabla\psi(z)\neq 0$ for all $z\in B_r(x_0)$ and then $\nabla \psi(x)\neq 0$. 
    Hence, by continuity there is $\delta>0$ such that $\nabla\psi(y)\neq 0$ for all 
    $y\in \overline{B_\delta(x)}\subset B_r(x_0)$. Therefore, we can take $\delta'>0$ 
    such that $B_{\delta'}(y)\subset B_{\delta}(x)\subset\subset\{d_{\psi}>0\}$ and we 
    may apply again Lemma \ref{lemma:PV1} to find $\rho>0$ such that \eqref{pvball} holds
    for all $y\in B_{\delta}(x)$. 
    
    If on the contrary we have $p^{-}\leq \tfrac2{2-s}$ and $\nabla \psi(x_0) = 0$ we 
    choose $r>0$ such that $\nabla \psi(z)\neq 0$ for all 
    $z\in B_r(x_0)\setminus \{x_0\}$. Then, if $x\neq x_0$, $\nabla \psi(x)\neq 0$ 
    and we proceed as we did before. Now, if $x=x_0$,  $|x_0-y|<\delta$ implies 
    $d_\psi(y)<\delta$ and we also have $\psi\in C^2_\beta(B_r(x_0))$. Take $0<\delta<1$ 
    such that \eqref{pvball} holds for the first two cases and also impose $\delta<r/2$. 
    Then $B_{\delta}(y)\subset B_{2\delta}(x_0)\subset B_{r}(x_0)$ for all 
    $y\in B_{\delta}(x_0)$ and $d_{\psi}(y)<\delta$. 
    Hence we may use Lemma \ref{lemma:PV2} to find $\rho>0$ such that  \eqref{pvball} 
    also holds in this case for all $y\in B_{\delta}(x)$.
    
    Now we may suppose that $|x-y|<\rho/3$. Thus, using \eqref{tineqgeq} we have 
    \begin{equation}\label{boundint}
        \begin{split}
            g(D_s\psi)
            \frac{\mathcal{X}_{\mathbb{R}^n\setminus B_\rho(y)}(z)}{|y-z|^{n+s}} 
            & \leq c
            \left[g\left(\frac{|\psi(y)|}{|y-z|^s}\right)
            +g\left(\frac{|\psi(z)|}{|y-z|^s}\right)\right]
            \frac{\mathcal{X}_{\mathbb{R}^n\setminus B_\rho(y)}(z)}{|y-z|^{n+s}}\\
            & \leq c\left[g\left(\frac{\|\psi\|_{L^\infty(B_{\rho/3}(x))}}{\rho^s}\right)
            +g\left(\frac{|\psi(z)|}{|y-z|^s}\right)\right]
            \frac{\mathcal{X}_{\mathbb{R}^n\setminus B_\rho(y)}(z)}{|y-z|^{n+s}}.
        \end{split}
    \end{equation}
    If we take $\rho<1$, then the right hand side of \eqref{boundint} is integrable in 
    $\mathbb{R}^n$ by Remark \ref{PVwelldefined} since $\psi\in L_g(\mathbb{R}^n)$. 
    $g\left(\frac{|\psi(z)|}{|y-z|^s}\right)\frac{1}{|y-z|^{n+s}}$ belongs to 
    $L^1(\mathbb{R}^n\setminus B_\rho(y))$.
    Hence, using dominated convergence theorem and the continuity of 
    $g\left(\frac{\psi(\cdot)-\psi(z)}{|\cdot-z|^s}\right)\frac{1}{|\cdot-z|^{n+s}}$ in 
    $\mathbb{R}^n\setminus\{z\}$ we obtain
    \begin{equation}\label{limitytox}
        \int_{\mathbb{R}^n\setminus 
        B_\rho(y)}g\left(\frac{\psi(y)-\psi(z)}{|y-z|^s}\right)\frac{dz}{|y-z|^{n+s}}\to
        \int_{\mathbb{R}^n\setminus 
        B_\rho(x)}g\left(\frac{\psi(x)-\psi(z)}{|x-z|^s}\right)\frac{dz}{|x-z|^{n+s}} 
    \end{equation}
    as $y\to x$. Then there is $\delta>0$ such that 
    \begin{equation}\label{difoutball}
        \left|\int_{\mathbb{R}^n\setminus 
        B_\rho(y)}g\left(\frac{\psi(y)-\psi(z)}{|y-z|^s}\right)\frac{dz}{|y-z|^{n+s}}-
        \int_{\mathbb{R}^n\setminus 
        B_\rho(x)}g\left(\frac{\psi(x)-\psi(z)}{|x-z|^s}\right)\frac{dz}{|x-z|^{n+s}}
        \right|<\frac{\varepsilon}{2} 
    \end{equation}whenever $|x-y|<\delta$. 
    
    Finally by \eqref{pvball} and \eqref{difoutball} we get 
    $$
        |(-\Delta_g)^s\psi(x)-(-\Delta_g)^s\psi(y)|<\varepsilon
    $$
    if $|x-y|$ is small enough.
\end{proof}

\section{Proof of Theorem \ref{visc wk}} \label{visc wk t}
    In this section, we prove that viscosity solutions to \eqref{problem} are also weak 
    solutions. {We will follow in the proof some calculations from \cite{KKL} and \cite{BM} adapted to the Orlicz framework.}
    \begin{proof}[Proof of Theorem \ref{visc wk}] 
        By Lemma \ref{infconvsol}, $u_\varepsilon$ is a viscosity supersolution of 
        $$
            (-\Delta_g)^{s}w=f_\varepsilon(x, w, D_g^{s}w) \quad \textnormal{in }\Omega_{r(\varepsilon)},
        $$
        and hence it satisfies
        $$
            (-\Delta_g)^{s}u_\varepsilon \geq f_\varepsilon(x, u_\varepsilon, D_g^{s}u_\varepsilon)\quad a.e. \textnormal{ in }\Omega_{r(\varepsilon)}.
        $$
        By Lemmas \ref{lemma 1 weak point} and \ref{lemma 2 weak point},
        \begin{equation}\label{wk u ep}
            \int\int_{Q_{\Omega_{r(\varepsilon)}}} 
            g\left(D_su_\varepsilon \right)
            \dfrac{\psi(x)-\psi(y)}{|x-y|^{n+s}}\,dx\,dy 
            \geq \int_{\Omega_{r(\varepsilon)}}f_\varepsilon(x, u_\varepsilon, D_g^{s}u_\varepsilon)\,\psi\,dx.
        \end{equation}
        Hence, $u_\varepsilon$ is a weak supersolution in $\Omega_{r(\varepsilon)}$. 
        Let now $\varphi \in C_0^{\infty}(\Omega)$, non-negative with 
        $K\coloneqq $supp$\,\varphi$. 
        Then,  $K \subset \Omega_{r(\varepsilon)}$ 
        for all $\varepsilon$ small enough.  
        The goal is to  take the limit as $\varepsilon \to 0$ in \eqref{wk u ep}. 
        
        Let $\xi \in C_0^{\infty}(\Omega_{r(\varepsilon)})$, $0 \leq \xi \leq 1$ such 
        that $\xi = 1$ in $K' \subset K''$, where $K'$ contains $K$ and $K''$ is a 
        compact set containing the support of $\xi$. By Proposition \ref{caccio},
        \begin{equation}
            \begin{split}
                \int_{K'}\int_{\mathbb{R}^{n}}G\left(|D_s u_\varepsilon|\right)\,d\mu 
                & = \int_{K'}\int_{\mathbb{R}^{n}}G\left(|D_s 
                u_\varepsilon|\right)G(\xi(x))\,d\mu \\ &  \leq C\left[ 
                G(\text{osc}\,u_\varepsilon)\left(\int_{K''}\int_{\mathbb{R}^{n}} 
                G\left(|D_s\xi| \right)\,d\mu + \gamma_{\infty, \varepsilon}\right) + 
                \text{osc}(u_\varepsilon)\right],
            \end{split}
        \end{equation}
        with
        $$
            \gamma_{\infty,\varepsilon}\coloneqq
            \max_{[-\|u_\varepsilon\|_{L^\infty(\mathbb{R}^n)}, 
            \|u_\varepsilon\|_{L^\infty(\mathbb{R}^n)}]}\gamma(t).
        $$
        Since $u_\varepsilon$ is increasing as $\varepsilon \to 0^{+}$ and 
        $u \in L^{\infty}(\mathbb{R}^{n})$, we have
        $$
            \text{osc}(u_\varepsilon) \leq \sup_{\mathbb{R}^{n}}u - 
            \inf_{\mathbb{R}^{n}}u_{\varepsilon_0},
        $$
        for all $\varepsilon \leq \varepsilon_0$, 
        and also 
        $\|u_\varepsilon\|_{L^{\infty}(\mathbb{R}^{n})} \leq 
        \|u\|_{L^{\infty}(\mathbb{R}^{n})}$. Thus,
        \begin{equation}\label{bd G u}
            \int_{K'}\int_{\mathbb{R}^{n}}G\left( |D_s u_\varepsilon|\right) \,d\mu 
            \leq C \quad \text{ for }\varepsilon \leq \varepsilon_0.
        \end{equation}
        Hence, up to a subsequence, it holds
        \begin{equation}\label{strong c}
            u_\varepsilon \to u \quad \text{in } L^{G}(K')
        \end{equation}and
        \begin{equation}\label{weak conv mu}
            D_su_\varepsilon \rightharpoonup D_su \quad L^{G}_\mu(K' \times 
            \mathbb{R}^{n}).
        \end{equation}
        For further reference observe that \eqref{bd G u} and Lemma \ref{G g} imply
        $$
            \int_{K'}\int_{\mathbb{R}^{n}}\tilde{G}(g(D_s u_\varepsilon))\,d\mu \leq C\int_{K'}\int_{\mathbb{R}^{n}}G(D_s u_\varepsilon)\,d\mu \leq C
        $$
        for all $\varepsilon$. Thus,
        \begin{equation}\label{wk conv g tilde}
            g(D_s u_\varepsilon) \rightharpoonup g(D_s u) 
            \quad \text{in }L^{\tilde{G}}_\mu(K' \times \mathbb{R}^{n}).
        \end{equation}
    
        Consider now
        $$
            \psi(x)\coloneqq (u(x)-u_\varepsilon(x))\theta(x),
        $$
        where $\theta \in C_0^{\infty}(\Omega)$, 
        supp$\,\theta \, \subset K'$, $\theta \in [0, 1]$ 
        and $\theta = 1$ in $K \subset K'$. Observe that
        $$
            \psi(x)-\psi(y)= \theta(x)\left(u(x)-u_\varepsilon(x)-(u(y)
            -u_\varepsilon(y)) \right)+
            (\theta(x)-\theta(y))(u(y)-u_\varepsilon(y)).
        $$
        Then, by \eqref{wk u ep}
        \begin{equation}\label{iIa}
            \begin{split}
                & \int_{K'}f_\varepsilon(x, u_\varepsilon, D_g^{s}u_\varepsilon)\psi\,dx 
                \leq \int\int_{Q_{K'}}g\left(D_su_\varepsilon \right) D_s\psi\, d\mu \\ 
                & \qquad \quad = \int\int_{Q_{K'}}g\left( D_s u_\varepsilon 
                \right)\dfrac{(\theta(x)-\theta(y))(u(y)-u_\varepsilon(y))}{|x-y|^{s}}\,
                d\mu \\ 
                &  + \int_{K'}\int_{\mathbb{R}^{n}}g\left( D_s u \right)
                \dfrac{\theta(x)(u(x)-u(y)-(u_\varepsilon(x)
                -u_\varepsilon(y)))}{|x-y|^{s}}\,d\mu \\ 
                & \qquad \quad - \int_{K'}\int_{\mathbb{R}^{n}}\left[ 
                g(D_su_\varepsilon)-g(D_su)\right]\dfrac{\theta(x)(u(x)-u(y)
                -(u_\varepsilon(x)-u_\varepsilon(y)))}{|x-y|^{s}}\,d\mu.
        \end{split}
        \end{equation}
        Now, by H\"{o}lder's inequality
         \begin{equation}\label{first integral}
            \begin{split}
                &\int\int_{Q_{K'}}g\left( D_s u_\varepsilon
                \right)\dfrac{(\theta(x)-\theta(y))(u(y)-u_\varepsilon(y))}{|x-y|^{s}}
                \,d\mu \\
                & \qquad \leq C\|g\left( D_s u_\varepsilon 
                \right)\|_{L_\mu^{\tilde{G}}(Q_{K'})}\bigg\| 
                \dfrac{(\theta(x)-\theta(y))(u(y)-u_\varepsilon(y))}{|x-y|^{s}}
                \bigg\|_{L_\mu^{G}(Q_{K'})} \\
                &  \qquad\leq 
                C\bigg\|\dfrac{(\theta(x)-\theta(y))(u(y)
                -u_\varepsilon(y))}{|x-y|^{s}}\bigg\|_{L_\mu^{G}(Q_{K'})} .
            \end{split}
        \end{equation}
        Also, suppose without loss of generality  that
        \begin{equation}
            \begin{split}
                \min\left\lbrace \bigg\| 
                    \dfrac{(\theta(x)-\theta(y))(u(y)
                    -u_\varepsilon(y))}{|x-y|^{s}} 
                    \bigg\|_{L_\mu^{G}(Q_{K'})}^{p^{+}}\right.&,
                    \left.\bigg\| 
                    \dfrac{(\theta(x)-\theta(y))(u(y)-u_\varepsilon(y))}{|x-y|^{s}}
                    \bigg\|_{L_\mu^{G}(Q_{K'})}^{p^{-}} \right\rbrace \\ 
                    &  = \bigg\| \dfrac{(\theta(x)-\theta(y))
                    (u(y)-u_\varepsilon(y))}{|x-y|^{s}}
                    \bigg\|_{L_\mu^{G}(Q_{K'})}^{p^{-}}.
             \end{split}
        \end{equation}Then, by Lemma \ref{comp norm modular},
        \begin{equation}\label{int zero}
            \begin{split}
                \bigg\| \dfrac{(\theta(x)-\theta(y))(u(y)-u_\varepsilon(y))}{|x-y|^{s}} 
                &\bigg\|_{L_\mu^{G}(Q_{K'})}  \leq 
                \left[ \Phi_{\mu,G}\left(\dfrac{(\theta(x)-\theta(y))(u(y)
                -u_\varepsilon(y))}{|x-y|^{s}} \right)\right]^{1/p^{-}} \\
                &  = \left[\int\int_{Q_{K'}}G
                \left( \dfrac{(\theta(x)-\theta(y))
                (u(y)-u_\varepsilon(y))}{|x-y|^{s}} \right) \,d\mu \right]^{1/p^{-}}. 
             \end{split}
        \end{equation}
        
        Next, we will show that the last integral converges to zero. 
        Observe that {\eqref{G product}} and the uniform  global 
        handedness of $u_\varepsilon$ imply that
        \begin{equation}
            G\left(\dfrac{(\theta(x)-\theta(y))(u(y)-u_\varepsilon(y))}{|x-y|^{s}}\right)
            \leq C(2\|u\|_{L^{\infty}(\mathbb{R}^{n})}
            {+1})^{p^{+}}G(|D_s\theta|).
        \end{equation}
        Since 
        $$
            G(|D_s\theta|) \leq C\max\left\lbrace |D_s\theta|^{p^{+}}, 
            |D_s\theta|^{p^{-}}\right\rbrace \leq C\left( |D_s\theta|^{p^{+}}+ 
            |D_s\theta|^{p^{-}}\right),$$
        we obtain by the smoothness of $\theta$ that 
        $$
            G\left(\dfrac{(\theta(x)-\theta(y))
            (u(y)-u_\varepsilon(y))}{|x-y|^{s}}\right)  \in L^{1}_\mu(Q_{K'}).
        $$
        
        By dominated convergence theorem, the last integral in \eqref{int zero} goes to 
        zero and hence recalling \eqref{first integral}, it holds
        \begin{equation}\label{I1a}
             \int\int_{Q_{K'}}g\left( D_s u_\varepsilon 
             \right)\dfrac{(\theta(x)-\theta(y))(u(y)-u_\varepsilon(y))}{|x-y|^{s}}
             \,d\mu \to 0 \quad \text{as }\varepsilon \to 0^{+}. 
        \end{equation}
        Now, we treat the  following integral in \eqref{iIa}:
        \begin{equation}\label{second inte}
            \int_{K'}\int_{\mathbb{R}^{n}}g\left( D_s u 
            \right)\dfrac{\theta(x)(u(x)-u(y)
            -(u_\varepsilon(x)-u_\varepsilon(y)))}{|x-y|^{s}}\,d\mu.
        \end{equation}
        Observe that by Lemma \ref{G g},
        $$
            g(D_su)\theta \in L_\mu^{\tilde{G}}(K' \times \mathbb{R}^{n}).
        $$
        Hence, by \eqref{weak conv mu} it holds
        \begin{equation}\label{I2a}
            \int_{K'}\int_{\mathbb{R}^{n}}g
            \left( D_s u\right)\dfrac{\theta(x)(u(x)-u(y)
            -(u_\varepsilon(x)-u_\varepsilon(y)))}{|x-y|^{s}}\,d\mu \to 0 
            \quad \text{as }\varepsilon \to 0^{+}. 
        \end{equation}
    
        Finally, we consider the integral
        \begin{equation}\label{third inte}
            \int_{K'}\int_{\mathbb{R}^{n}}
            \left[ g(D_su_\varepsilon)-g(D_su)\right]\dfrac{\theta(x)(u(x)-u(y)
            -(u_\varepsilon(x)-u_\varepsilon(y)))}{|x-y|^{s}}\,d\mu.
        \end{equation}
        By the convexity of $G$, we have
        $$
            G(|D_s u|) \leq G\left( \bigg| \frac{D_su + 
            D_s u_\varepsilon}{2}\bigg|\right) 
            + g(|D_s u|)\frac{D_s u}{|D_s u|}
            \left( \frac{D_su - D_s u_\varepsilon}{2}\right)
        $$
        and
        $$
            G(|D_s u_\varepsilon|) \leq G\left( \bigg| 
            \frac{D_su + D_s u_\varepsilon}{2}\bigg|\right) 
            + g(|D_s u_\varepsilon|)\frac{D_s u_\varepsilon}{|D_s u_\varepsilon|}\left( \frac{D_su_\varepsilon - D_s u}{2}\right).
        $$
        Adding the above expressions gives
        \begin{equation}\label{G conv 1}
            \begin{split}
            \frac{1}{2}\left[ g(D_s)-g(D_s u_\varepsilon)\right]
            (D_su - D_s u_\varepsilon) &\geq G(|D_s u|)+ G(|D_S u_\varepsilon|) 
            - G\left( \bigg| \frac{D_su + D_s u_\varepsilon}{2}\bigg|\right) \\ 
            & \geq  G\left( \bigg| \frac{D_su - D_s u_\varepsilon}{2}\bigg|\right),
        \end{split}
        \end{equation}
        where the last inequality follows again by convexity of $G$ 
        (see \cite[Lemma 2.9]{L}). 
        Hence, by \eqref{iIa}, \eqref{I1a}, \eqref{I2a}, and \eqref{G conv 1}, 
        we obtain
        \begin{equation}
            \begin{split}0 
                & \leq \lim_{\varepsilon \to 0}\int_{K'}\int_{\mathbb{R}^{n}}
                \left[ g(D_su_\varepsilon)-g(D_su)\right]
                \dfrac{\theta(x)(u(x)-u(y)-(u_\varepsilon(x)
                -u_\varepsilon(y)))}{|x-y|^{s}}\,d\mu \\ 
                & \leq \limsup_{\varepsilon \to 0}
                \left(-\int_{K'}f_\varepsilon(x, u_\varepsilon, D_g^{s}u_\varepsilon) \right).
            \end{split}
        \end{equation}
     
        From the assumption \eqref{growth f}, we have
        $$
            -\int_{K'}f_\varepsilon(x, u_\varepsilon, D_g^{s}u_\varepsilon) \,dx 
            \leq \gamma_\infty \int_{K'}\tilde{G}^{-1}
            \left( |D_g^{s}u_\varepsilon|\right)(u-u_\varepsilon)\theta\,dx 
            + \|\phi\|_{L^{\infty}(K')}\int_{K'}(u-u_\varepsilon)\theta\,dx.
        $$
        Now, by H\"{o}lder's inequality,
        $$
            \int_{K'}\tilde{G}^{-1}\left( |D_g^{s}u_\varepsilon|\right)
            (u-u_\varepsilon)\theta\,dx \leq 
            \|\tilde{G}^{-1}(|D_g^{s}u_\varepsilon|)\|_{L^{\tilde{G}}(K')}
            \|u-u_\varepsilon\|_{L^{G}(K')}.
        $$
        By \eqref{strong c}, which in particular holds for $B=G$, 
        $$
            \|u-u_\varepsilon\|_{L^{G}(K')} \to 0 \quad \text{as }\varepsilon \to 0.
        $$
        On the other hand,
        \begin{equation}
            \begin{split}
                \|\tilde{G}^{-1}(D_g^{s}u_\varepsilon)\|_{L^{\tilde{G}}(K')} 
                & \leq \Phi_{\tilde{G}}(\tilde{G}^{-1}
                (D_g^{s}u_\varepsilon))^{1/p^{-}} + 
                \Phi_{\tilde{G}}(\tilde{G}^{-1}(D_g^{s}u_\varepsilon))^{1/p^{+}}  
                \\ & = \left(\int_{K'}\int_{\mathbb{R}^{n}}
                G\left(D_s u_\varepsilon
                \right)\,d\mu\right)^{1/p^{+}}
                +\left(\int_{K'}\int_{\mathbb{R}^{n}}
                G\left(D_s u_\varepsilon \right)\,d\mu\right)^{1/p^{-}},
            \end{split}
        \end{equation}
        and the last terms are uniformly bounded by \eqref{weak conv mu}. 
        Therefore 
        $$ 
            \limsup_{\varepsilon \to 0}\left(-\int_{K'}f_\varepsilon(x, u_\varepsilon, D_g^{s}u_\varepsilon)\,dx \right) =0
        $$
        and thus
        \begin{equation}\label{I3}
            \lim_{\varepsilon \to 0}\int_{K'}\int_{\mathbb{R}^{n}}
            \left[ g(D_su_\varepsilon)-g(D_su)\right]
            \dfrac{\theta(x)(u(x)-u(y)
            -(u_\varepsilon(x)-u_\varepsilon(y)))}{|x-y|^{s}}\,d\mu=0.
        \end{equation} 
        Now, from \eqref{G conv 1}, we get
        \[
            \begin{split}
                \frac{1}{2}\left[ g(D_s)-g(D_s u_\varepsilon)\right]
                (D_su - D_s u_\varepsilon)  \geq G \left( \frac{|D_s u-D_s u_\varepsilon|}{2}\right)   \geq \left(\frac{1}{2} \right)^{p^{-}}G(|D_s u-D_s u_\varepsilon|),
            \end{split}
        \]
        where the last inequality follows from  \eqref{G product}. 
        In this way, \eqref{I3} and Lemma \ref{conv f} imply that we may pass to the 
        limit as $\varepsilon \to 0$ in \eqref{wk u ep} to get
        $$
            \int \int_{Q_K}g(D_s u)D_s\psi\,d\mu \geq \int_K 
            f(x, u, D_g^{s}u)\psi\,dx,
        $$
        where in the left-hand side we use \eqref{wk conv g tilde}. 
        This ends the proof of the  theorem. 
    \end{proof}


\section{Proof of Theorem \ref{wk visc}}\label{wk visc t}
    Lastly, we show that a weak solution is also a viscosity solution.
    
    \begin{proof}[Proof of Theorem \ref{wk visc}] 
        We proceed by contradiction. Assume that $u \in L^{\infty}(\mathbb{R}^{n})$ 
        is a continuous weak supersolution, but it is not a viscosity supersolution. 
        Hence, according to Remark \ref{u bdd},  there are a point $x_0 \in \Omega$ 
        and a test function $\psi \in C^{2}(B_r(x_0))\cap L_g(\mathbb{R}^{n})$, 
        that we may take equal to $u$ outside $B_r(x_0)$ and thus in 
        $L^\infty(\mathbb{R}^n)$ since $u$ is bounded, satisfying (iii) 
        from Definition \ref{dv} and
        \begin{equation}
            (-\Delta_g)^{s}\psi (x_0) < f(x_0, \psi(x_0), D_g^{s}\psi(x_0)).
        \end{equation}
        By Lemma \ref{Continuidad D}, the mapping
        $$
            x \to f(x, \psi(x), D_g^{s}\psi (x))
        $$
        is continuous in $B_r(x_0)$. 
        Also, by Lemma \ref{contop},  there exist $\delta$ and $0 < r_1 < r$ 
        such that
        \begin{equation}\label{eqq 100}
            (-\Delta_g)^{s} \psi(x) 
            \leq f(x, u, D_g^{s}\psi(x)) - \delta, \quad x \in B_{r_1}(x_0).
        \end{equation}
        Since $\psi \in C^{2}(B_r(x_0))\cap L^\infty(\mathbb{R}^n)$, 
        by Lemma \ref{ContPert}, for every $\varepsilon > 0$ and $\rho >0$, 
        there exist $0 < \theta_1 = \theta(\varepsilon, \rho)$, $0 < r_2 < \rho$ 
        and $\eta \in C_0^{2}(B_{r_2/2}(x_0))$, $\eta \in [0, 1]$, with $\eta(x_0)=1$ 
        such that
        $$
            \sup_{B_{r_2}(x_0)}|(-\Delta_g)^{s}\psi - (-\Delta_g)^{s}
                (\psi+\theta \eta) | < \varepsilon
        $$
        for every $0 < \theta < \theta_1$. Taking $\varepsilon = \delta/2$ and 
        $\rho = r_1$ and using the Lipschitz assumption on $f$, we get by Lemma \ref{Continuidad D} and  \eqref{eqq 100} that
        \begin{equation}\label{eqq 101}
        \begin{split}
                (-\Delta_g)^{s}(\psi+\theta \eta)(x) &\leq (-\Delta_g)^{s}\psi(x) + 
                \frac{\delta}{2} 
                \leq f(x, u(x), D_g^{s}\psi(x)) -\frac{\delta}{2} \\ 
                & \leq f(x, u(x), D_g^{s}(\psi+\theta\eta)(x))
        \end{split}
        \end{equation}
        for $x \in B_{r_2/2}$ and 
        $0 < \theta < \min\left\lbrace \theta_1(\delta/2, r_1), 
        \tilde{\theta}(\delta/(2K)), r_2 \right\rbrace$, 
        where $\tilde{\theta}$ is given 
        by Lemma \ref{Continuidad D} and $K$ is the Lipschitz constant of $f$. 
        Let now
        $$
            \tilde{f}(x, v)\coloneqq f(x, u(x), v).
        $$
        Then, \eqref{eqq 101} also holds weakly in $B_{r_2/2}(x_0)$ and hence 
        $\psi+\theta \eta$ is a weak subsolution of the equation
        $$
            (-\Delta_g)^{s}v(x)= \tilde{f}(x, D_g^{s}v(x)), \qquad x \in B_{r_2/2}(x_0).
        $$
        Since $u$ is a weak supersolution of the same equation and 
        $\psi+\theta \eta \leq u$ in $\mathbb{R}^{n}\setminus B_{r_2/2}(x_0)$, 
        by the {(CPP)}, we get $\psi+\theta \eta \leq u$ 
        in $\mathbb{R}^{n}$. In particular,
        $$
            u(x_0) \geq \psi(x_0)+ \theta\eta(x_0) =  \psi(x_0)+ \theta > \psi(x_0)
        $$
        which is a contradiction with the fact that $u(x_0)= \psi(x_0)$. 
    \end{proof}

    \subsection{A comparison principle} 
        In this section, we provide a comparison principle for non-homogeneous $g-$Laplace 
        equations. It is worth to  point out that the general comparison principle  needed 
        in the proof of Theorem \ref{wk visc} remains an open problem. 
        \begin{theorem}
            Let $f=f(x, r)$ be non-increasing in $r$. Assume that $u$ and $v$ 
            are weak sub and supersolutions, respectively,  of
            $$
                (-\Delta_g)^s w= f(x, w) \quad \text{in }\Omega,
            $$
            with $u \leq v$ in $\mathbb{R}^n\setminus \Omega$. 
            Then $v\geq u$  in  $\mathbb{R}^n.$
\end{theorem}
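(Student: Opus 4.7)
The plan is to test both weak formulations against the admissible function $w \coloneqq (u-v)_+$. Since $u\leq v$ in $\mathbb{R}^n\setminus\Omega$, $w$ vanishes outside $\Omega$. Using the $\Delta_2$-property of $G$ together with $|w(x)-w(y)|\leq |u(x)-u(y)|+|v(x)-v(y)|$, one verifies $w\in W_0^{s,G}(\Omega)$, so by density it is an admissible test function. Subtracting the weak subsolution inequality for $u$ from the weak supersolution inequality for $v$ (with $w\geq 0$) gives
\[
    \int_{\mathbb{R}^n}\int_{\mathbb{R}^n} \bigl[g(D_s u)-g(D_s v)\bigr]\, D_s w\, d\mu
    \;\leq\; \int_{\Omega}\bigl[f(x,u)-f(x,v)\bigr] w\, dx.
\]
On the set $A\coloneqq\{u>v\}$ where $w>0$, the monotonicity of $f$ in $r$ forces $f(x,u)\leq f(x,v)$, so the right-hand side is $\leq 0$.

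Next, I would establish the pointwise inequality $\bigl[g(D_s u)-g(D_s v)\bigr] D_s w\geq 0$ almost everywhere on $\mathbb{R}^n\times\mathbb{R}^n$. Writing $\phi\coloneqq u-v$, so that $w=\phi_+$, the claim reduces to showing $\phi(x)-\phi(y)$ and $\phi_+(x)-\phi_+(y)$ have the same sign (or the latter vanishes), which follows by checking the four cases according to the signs of $\phi(x)$ and $\phi(y)$, together with the monotonicity of $g$ coming from \eqref{H1}. Combined with the previous step, the double integral equals zero and its integrand vanishes $\mu$-a.e.

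Finally, since $g$ is strictly increasing on $\mathbb{R}$ (as $p^->1$ in \eqref{H11} gives $g'>0$), the vanishing of the integrand forces either $D_s(u-v)=0$ or $D_s w=0$ at almost every pair $(x,y)$. However, for $(x,y)\in A\times A^c$ one has $\phi(x)>0\geq \phi(y)$, hence $D_s(u-v)>0$ and $D_s w=\phi(x)/|x-y|^s>0$, making the integrand strictly positive on $A\times A^c$. Therefore $\mu(A\times A^c)=0$, and since $A\subset\Omega$ is bounded while $A^c\supset\mathbb{R}^n\setminus\Omega$ has infinite Lebesgue measure and $|x-y|>0$ $\mu$-a.e.\ on $A\times A^c$, we must have $|A|=0$. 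This yields $u\leq v$ a.e.\ in $\mathbb{R}^n$. The main technical point is the admissibility of $w$ as a test function in $W_0^{s,G}(\Omega)$, which requires the Orlicz truncation argument described above; once that is in hand, the monotonicity chain and sign analysis are essentially automatic.
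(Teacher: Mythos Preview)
Your proof is correct and follows essentially the same approach as the paper: test both inequalities with $(u-v)_+$, subtract, use the monotonicity of $f$ to control the right-hand side, and use the monotonicity of $g$ to force $(u-v)_+=0$. The paper delegates the last step to \cite[Lemma C.4]{BSV}, whereas you spell out the sign analysis and the $\mu(A\times A^c)=0$ argument explicitly; otherwise the arguments coincide.
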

\begin{proof}
    Using $\psi=(u-v)^+$ as a test function for $u$ and $v$, and subtracting give
    \begin{equation}\label{comp1}
        \int_{\mathbb{R}^n}\int_{\mathbb{R}^n}\left( g\left(D_s v \right)-g\left(D_s u \right)\right)D_s\psi \,d\mu \geq \int_\Omega (f(x, v)-f(x, u))\psi(x)\,dx. 
    \end{equation}Observe that the latter integral is indeed over the set $\{u\geq v\}$ and hence, since $f(x, r)$ is non-increasing in $r$, the integral is greater or equal than $0$. The left-hand side of \eqref{comp1} may be treated as in \cite[Lemma C.4]{BSV} to get that $(u-v)^+ =0$. Hence, comparison follows. 
\end{proof}

\appendix
\section{Some inequalities for Young functions}

\begin{lemma}\label{tineqg}
    Suppose that 
    \begin{equation}\label{H11Bis}
    1< p^{-} \leq \dfrac{tg(t)}{G(t)} \leq p^{+} <\infty.
    \end{equation}
    with $1< p^-<p^+<\infty$. 
    Then, there is a positive constant $C$ such that 
    \begin{equation}\label{tineqgeq}
        g(s+t)\leq C(g(s)+g(t))
    \end{equation}for all $s,t\geq 0$.
\end{lemma}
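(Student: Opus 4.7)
The plan is to upgrade the given index bound to a doubling estimate of the form $g(2t)\leq C_0\,g(t)$, and then combine it with monotonicity of $g$ and the trivial inequality $s+t\leq 2\max\{s,t\}$. This reduces the two-variable problem to a standard one-variable scaling calculation.

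The right-hand inequality of \eqref{H11Bis} rewrites as $(\log G)'(t)=g(t)/G(t)\leq p^+/t$ for $t>0$; integrating from $t$ to $2t$ yields the familiar $\Delta_2$-estimate $G(2t)\leq 2^{p^+}\,G(t)$. Combining this with the two-sided bound $p^-\,G(t)/t \leq g(t)\leq p^+\,G(t)/t$ (immediate from \eqref{H11Bis}) I obtain
\[
g(2t)\leq \frac{p^+}{2t}\,G(2t)\leq \frac{p^+\,2^{p^+-1}}{t}\,G(t)\leq \frac{p^+\,2^{p^+-1}}{p^-}\,g(t)=:C_0\,g(t).
\]

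To conclude, assume without loss of generality $s\geq t\geq 0$. Then $s+t\leq 2s$, and the monotonicity of $g$ together with the doubling estimate give
\[
g(s+t)\leq g(2s)\leq C_0\, g(s)\leq C_0\,\bigl(g(s)+g(t)\bigr),
\]
which is \eqref{tineqgeq} with $C=C_0$; the degenerate case $s=t=0$ is immediate from $g(0)=0$.

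No serious obstacle is anticipated: the argument is a direct exploitation of the Matuszewska-type index condition \eqref{H11Bis}. The only mildly delicate point is that the logarithmic integration in the first step has to be justified, but this is automatic since $G$ is convex (hence locally absolutely continuous) and strictly positive on $(0,\infty)$.
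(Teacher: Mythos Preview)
Your proof is correct. The route differs slightly from the paper's: the paper first uses convexity of $G$ together with the $\Delta_2$-condition to obtain the subadditivity $G(s+t)\leq \tfrac{\mathbf C}{2}(G(s)+G(t))$, and only then converts back to $g$ via the two-sided ratio bound \eqref{H11Bis}, exploiting $\tfrac{s}{s+t},\tfrac{t}{s+t}\leq 1$. You instead derive a doubling estimate $g(2t)\leq C_0\,g(t)$ directly and finish with the monotonicity of $g$ and $s+t\leq 2\max\{s,t\}$. Both arguments rely on the same two ingredients (the $\Delta_2$-type bound coming from the upper index and the conversion between $g$ and $G$ via \eqref{H11Bis}); your version is a touch more direct since it bypasses the intermediate $G$-subadditivity, at the cost of invoking the monotonicity of $g$, which is part of the standing Young-function framework.
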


\begin{proof}
    By Theorem 4.1 from \cite{KR}, \eqref{H11Bis} implies that $G$ 
    satisfies the $\Delta_2$-condition, that is, there exists a constant 
    $\textbf{C}>2$ such that
    \begin{equation}\label{Delta2cond}
        G(2t)\leq \textbf{C}G(t), \quad t>0.
    \end{equation}
    The convexity of $G$ and \eqref{Delta2cond} give
    \begin{equation}\label{tineqG}
        G(s+t)\leq \frac{\textbf{C}}{2}(G(s)+G(t)), \quad s,t>0. 
    \end{equation}
    Then, by \eqref{H11Bis} and \eqref{tineqG} we get, for 
    $s,t>0$
    \begin{equation*}
        \begin{split}
            g(s+t) & \leq \frac{p^+}{s+t}G(s+t) \leq 
            \frac{p^+\textbf{C}}{2(s+t)}(G(s)+G(t))\\& \leq 
            \frac{p^+\textbf{C}}{2p^{-}(s+t)}(sg(s)+tg(t))  = 
            \frac{p^+\textbf{C}}{2p^{-}}\left(\frac{s}{s+t}g(s)+\frac{t}{s+t}g(t)\right) 
            \leq \frac{p^+\textbf{C}}{2p^{-}}(g(s)+g(t)).
        \end{split}
    \end{equation*}
    Finally, recalling that $g(0)=0$ we obtain \eqref{tineqgeq} for all $s,t\geq 0$.
\end{proof}

    \begin{lemma}\label{lema:aux1}
        Let $p^->1.$ Then there is a positive constant $C$ 
	    such that 
	    \[
	        |g(a+b)-g(b)|\le C\max\left\{(|b|+|a|)^{p^--2},(|b|+|a|)^{p^+-2}\right\}
	        |a|
	    \]
	    for any $a,b\in\mathbb{R}.$
	\end{lemma}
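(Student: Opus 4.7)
The plan is to first reduce the inequality to the compact form
\[
|g(a+b) - g(b)| \le C\, \frac{g(|a|+|b|)}{|a|+|b|}\,|a|,
\]
after which \eqref{H111} (in the form $g(t)/t \le C\max\{t^{p^--2},t^{p^+-2}\}$ obtained by dividing by $t$) immediately converts this into the statement of the lemma. I would split into two regimes depending on the relative sizes of $a$ and $b$.

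In the balanced regime $|b| \le 2|a|$, one has $|a|+|b| \le 3|a|$, and the odd symmetry $|g(x)|=g(|x|)$ together with the monotonicity of $g$ on $[0,\infty)$ gives, via the triangle inequality,
\[
|g(a+b) - g(b)| \le g(|a+b|) + g(|b|) \le 2g(|a|+|b|) \le 6\,\frac{g(|a|+|b|)}{|a|+|b|}\,|a|,
\]
which already matches the compact form above.

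In the complementary regime $|b| > 2|a|$, the numbers $b$ and $a+b$ have the same sign and the segment joining them is contained in $[|b|/2, 3|b|/2]$, which stays away from $0$. Reducing to $b>0$ by the oddness of $g$, absolute continuity allows us to write $g(a+b)-g(b) = a\int_0^1 g'(b+ta)\,dt$, and \eqref{H1} gives $g'(\xi) \le (p^+-1)g(\xi)/\xi$, whence
\[
|g(a+b)-g(b)| \le (p^+-1)|a|\int_0^1 \frac{g(b+ta)}{b+ta}\,dt.
\]
The step I expect to be the main obstacle is comparing $g(b+ta)/(b+ta)$ to $g(|a|+|b|)/(|a|+|b|)$, since $g(t)/t$ need not be globally monotone when $p^-<2$. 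The key is to use \eqref{gg product} rather than bare monotonicity: writing $s(t) = (b+ta)/(|a|+|b|) \in [1/3, 3/2]$, inequality \eqref{gg product} yields $g(s(t)(|a|+|b|)) \le \max\{s(t)^{p^--1},s(t)^{p^+-1}\}\,g(|a|+|b|)$, which is bounded by a constant depending only on $p^-$ and $p^+$ times $g(|a|+|b|)$. Combined with $1/(b+ta) \le 3/(|a|+|b|)$, this shows that the integrand is uniformly comparable to $g(|a|+|b|)/(|a|+|b|)$, and integrating over $[0,1]$ completes the bound. Putting the two regimes together yields the compact form, and \eqref{H111} then delivers the lemma.
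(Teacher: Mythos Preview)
Your proof is correct and takes a genuinely different route from the paper's. The paper writes $g(a+b)-g(b)=a\int_0^1 g'(b+ta)\,dt$ globally, bounds $|g'(t)|$ by $C\max\{|t|^{p^--2},|t|^{p^+-2}\}$ via \eqref{H1} and \eqref{H111}, and then invokes \cite[Lemma~3.2]{KKL} to control the possibly singular integrals $\int_0^1|b+ta|^{p^\pm-2}\,dt$ by $C(|a|+|b|)^{p^\pm-2}$; the delicate point (the segment $[b,a+b]$ crossing zero when $p^-<2$) is thus outsourced to that external lemma.

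Your argument is more self-contained: the case split $|b|\le 2|a|$ versus $|b|>2|a|$ is precisely what separates the regime where the segment may hit zero (handled by a bare triangle inequality, no integral needed) from the regime where it stays uniformly away from zero (so $g'$ is bounded pointwise and no singular integral appears). The passage through the intermediate ``intrinsic'' inequality $|g(a+b)-g(b)|\le C\,\dfrac{g(|a|+|b|)}{|a|+|b|}\,|a|$ is a nice bonus, since it is a statement purely in terms of $g$ and only invokes $p^\pm$ at the very last step via \eqref{H111}. The price you pay is a slightly longer argument; the gain is that you do not need to cite \cite{KKL}.
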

	\begin{proof}
	    Observe that
	    \[
	        |g(a+b)-g(b)|=\left|a\int_0^1 g^\prime(b+ta) dt\right|
	        \le|a|\int_0^1 |g^\prime(b+ta)| dt
	        \le|a|\int_0^1 g^\prime(|b+ta|) dt.
	    \]
	    Then, by \eqref{H1} and \eqref{H111}, there is a positive constant 
	    $C_1=C_1(p^-,p^+)$ such that
	    \[
	        	|g(a+b)-g(b)|\le C_1 |a|\max
	        	\left\{\int_{0}^1 |a+tb|^{p^--2} dt,\int_{0}^1 |a+tb|^{p^+-2} dt.
	        	\right\}
	    \]
	    Then, by \cite[Lemma 3.2]{KKL}, there is a positive constant 
	    $C_2=C_2(p^-,p^+)$ such that
	    \[
	        |g(a+b)-g(b)|\le C_2\max\left\{(|b|+|a|)^{p^--2},(|b|+|a|)^{p^+-2}\right\}
	        |a|.
	    \]
	\end{proof}
\section*{Acknowledgements}
 M. L. de Borb\'on and P. Ochoa  have been supported by CONICET and Grant B080, UNCUYO, Argentina.

\end{document}